\DeclareMathAccent{\vecTeX}{\mathord}{letters}{"7E} 
\begin{document}
\def\qed{\hfill$\Box$}
\newtheorem{theorem}{Theorem}{\it}{\rm}
\newtheorem{corollary}{Corollary}{\it}{\rm}
\newtheorem{lemma}{Lemma}{\it}{\rm}
\newtheorem{proposition}{Proposition}{\it}{\rm} 
\makeatletter
 \@addtoreset{corollary}{section}
 \@addtoreset{equation}{section}
 \@addtoreset{examples}{section}
 \@addtoreset{lemma}{section}
 \@addtoreset{proposition}{section}
 \@addtoreset{remarks}{section}
 \@addtoreset{theorem}{section}
\makeatother
\theoremstyle{definition} 
\newtheorem{examples}{Examples}  
\newtheorem{remark}{Remark} 
\newtheorem{remarks}{Remarks} 
\renewcommand{\thecorollary}{\arabic{section}.\arabic{corollary}}        
\renewcommand{\theequation}{\arabic{section}.\arabic{equation}}        
\renewcommand{\theexamples}{\arabic{section}.\arabic{examples}}        
\renewcommand{\thelemma}{\arabic{section}.\arabic{lemma}}        
\renewcommand{\theproposition}{\arabic{section}.\arabic{proposition}}        
\renewcommand{\theremark}{\arabic{section}.\arabic{remark}}        
\renewcommand{\theremarks}{\arabic{section}.\arabic{remarks}}        
\renewcommand{\thetheorem}{\arabic{section}.\arabic{theorem}}   
\makeatletter 
\def\@thm#1#2#3{%
  \ifhmode\unskip\unskip\par\fi
  \normalfont
  \trivlist
  \let\thmheadnl\relax
  \let\thm@swap\@gobble
  \thm@notefont{\fontseries\mddefault\upshape}%
  \thm@headpunct{}% add period after heading
  \thm@headsep 5\p@ plus\p@ minus\p@\relax
  \thm@space@setup
  #1% style overrides
  \@topsep \thm@preskip               % used by thm head
  \@topsepadd \thm@postskip           % used by \@endparenv
  \def\@tempa{#2}\ifx\@empty\@tempa
    \def\@tempa{\@oparg{\@begintheorem{#3}{}}[]}%
  \else
    \refstepcounter{#2}%
    \def\@tempa{\@oparg{\@begintheorem{#3}{\csname the#2\endcsname}}[]}%
  \fi
  \@tempa
}
\renewenvironment{proof}[1][\proofname]{\par
  \pushQED{\qed}%
  \normalfont \topsep6\p@\@plus6\p@\relax
  \trivlist
  \item[\hskip\labelsep
        \itshape
    #1\@addpunct{}]\ignorespaces
}{%
  \popQED\endtrivlist\@endpefalse
}
\providecommand{\proofname}{Proof} 
\makeatother 
 \newcommand*{\al}{\alpha}
 \newcommand*{\ba}{\beta}
 \newcommand*{\Da}{\Delta}
 \newcommand*{\da}{\delta}
 \newcommand*{\Ga}{\Gamma}
 \newcommand*{\ga}{\gamma}
 \newcommand*{\ia}{\iota}
 \newcommand*{\ka}{\kappa}
 \newcommand*{\wk}{{\wt{\ka}}}
 \newcommand*{\wkk}{{\wt{\ka}\ka}}
 \newcommand*{\kwk}{{\ka\wt{\ka}}}
 \newcommand*{\lda}{\lambda}
 \newcommand*{\na}{\nabla}
 \newcommand*{\Om}{\Omega}
 \newcommand*{\om}{\omega}
 \newcommand*{\oO}{\ol{\Omega}} 
 \newcommand*{\Sa}{\Sigma}
 \newcommand*{\sa}{\sigma}
 \newcommand*{\ta}{\theta}
 \newcommand*{\za}{\zeta}
 \newcommand*{\ve}{\varepsilon}
 \newcommand*{\vp}{\varphi}
 \newcommand*{\vt}{\vartheta}
 \newcommand*{\BA}{{\mathbb A}}
 \newcommand*{\BB}{{\mathbb B}}
 \newcommand*{\BC}{{\mathbb C}}
 \newcommand*{\BE}{{\mathbb E}}
 \newcommand*{\BF}{{\mathbb F}}
 \newcommand*{\BG}{{\mathbb G}}
 \newcommand*{\BH}{{\mathbb H}}
 \newcommand*{\BL}{{\mathbb L}}
 \newcommand*{\BN}{{\mathbb N}}
 \newcommand*{\BR}{{\mathbb R}}
 \newcommand*{\BX}{{\mathbb X}}
 \newcommand*{\cA}{{\mathcal A}}
 \newcommand*{\cB}{{\mathcal B}}
 \newcommand*{\cC}{{\mathcal C}}
 \newcommand*{\cD}{{\mathcal D}}
 \newcommand*{\cL}{{\mathcal L}}
 \newcommand*{\cR}{{\mathcal R}}
 \newcommand*{\cW}{{\mathcal W}}
 \newcommand*{\cX}{{\mathcal X}}
 \newcommand*{\cY}{{\mathcal Y}}
 \newcommand*{\gA}{{\mathfrak A}}
 \newcommand*{\gsa}{{\mathfrak a}}
 \newcommand*{\gB}{{\mathfrak B}}
 \newcommand*{\gF}{{\mathfrak F}}
 \newcommand*{\gK}{{\mathfrak K}}
 \newcommand*{\gM}{{\mathfrak M}}
 \newcommand*{\gN}{{\mathfrak N}}
 \newcommand*{\gss}{{\mathfrak s}}
 \newcommand*{\gsu}{{\mathfrak u}}
 \newcommand*{\gsv}{{\mathfrak v}}
 \newcommand*{\gW}{{\mathfrak W}}
 \newcommand*{\sA}{{\mathsf A}}
 \newcommand*{\sBC}{{\mathsf{BC}}}
 \newcommand*{\sW}{{\mathsf W}}
 \newcommand*{\bal}{\begin{aligned}}
 \newcommand*{\eal}{\end{aligned}}
 \newcommand*{\uti}[1]{(#1)\space\space}
 \newcommand*{\qa}{,\qquad}
 \newcommand*{\qb}{,\quad}
 \newcommand*{\mf}[1]{\boldsymbol{#1}}  
 \newcommand*{\ci}{\mathaccent"7017 }   
 \newcommand*{\hb}[1]{\hbox{$#1$}} %do NOT remove where the next 
 %five commands are used
 \newcommand*{\sdot}{\!\cdot\!}
 \newcommand*{\sco}{\kern2pt\colon\kern2pt}
 \newcommand*{\sn}{\kern1pt|\kern1pt}
 \newcommand*{\bsn}{\kern1pt\big|\kern1pt}
 \newcommand*{\Bsn}{\kern1pt\Big|\kern1pt}
 \newcommand*{\ssm}{\!\setminus\!}
 \newcommand*{\eesdot}{\hbox{$[\![{}\sdot{}]\!]$}}
 \newcommand*{\eea}[1]{\hbox{$[\![#1]\!]$}}
 \newcommand*{\beea}[1]{\hbox{$\big[\!\kern-1pt\big[#1\big]\!\kern-1pt\big]$}}
 \newcommand*{\Beea}[1]{\hbox{$\Big[\!\kern-1pt\Big[#1\Big]\!\kern-1pt\Big]$}}
 \newcommand*{\vsdot}{\hbox{$\vert\sdot\vert$}}
 \newcommand*{\Vsdot}{\hbox{$\Vert\sdot\Vert$}}
 \newcommand*{\npbd}{\postdisplaypenalty=20000} 
 \newcommand*{\po}{\pagebreak[2] }
 \newcommand*{\prsn}{\hbox{$(\cdot\sn\cdot)$}}
 \newcommand*{\pw}{\hbox{$\dl{}\sdot{},{}\sdot{}\dr$}}
 \newcommand*{\mfA}{{\mf{A}}} 
 \newcommand*{\mfsA}{{\mf{\sA}}} 
 \newcommand*{\mfB}{{\mf{B}}} 
 \newcommand*{\mfC}{{\mf{C}}} 
 \newcommand*{\mfE}{{\mf{E}}} 
 \newcommand*{\mff}{{\mf{f}}} 
 \newcommand*{\mfg}{{\mf{g}}} 
 \newcommand*{\mfh}{{\mf{h}}} 
 \newcommand*{\mfu}{{\mf{u}}} 
 \newcommand*{\mfv}{{\mf{v}}} 
 \newcommand*{\mfW}{{\mf{W}}} 
 \newcommand*{\mfsW}{{\mf{\sW}}} 
 \newcommand*{\mfw}{{\mf{w}}} 
 \newcommand*{\mfz}{{\mf{z}}} 
 \newcommand*{\mfvp}{{\mf{\vp}}} 
 \newcommand*{\mfpsi}{{\mf{\psi}}} 
 \newcommand*{\ol}{\overline}
 \newcommand*{\ul}{\underline}
 \newcommand*{\wh}{\widehat}
 \newcommand*{\wt}{\widetilde}
 \newsavebox{\Prel}
 \sbox{\Prel}{\begin{picture}(2,6)(0,0)\put(1,2.5){\circle*{2}}\end{picture}}
 \newcommand*{\btdot}{\mathrel{\usebox{\Prel}}}
 \newcommand*{\fdot}{{\usebox{\Prel}}}
 \newsavebox{\supiPrel} %for small letters
 \sbox{\supiPrel}{\begin{picture}(2,2)(0,0)
 \put(.9,.9){\circle*{1}}
 \put(1.1,.9){\circle*{1}}
 \put(1,.8){\circle*{1}}
 \put(1,1){\circle*{1}}
 \end{picture}}
 \newcommand*{\ifdot}{{\usebox{\supiPrel}}}
 \newcommand*{\vph}{\vphantom}
 \newcommand*{\hr}{\hookrightarrow}
 \newcommand*{\ra}{\rightarrow}
 \newcommand*{\dl}{\langle}
 \newcommand*{\dr}{\rangle}
 \newcommand*{\card}{\mathop{\rm card}\nolimits}
 \newcommand*{\diag}{{\mathop{\rm diag}\nolimits}}
 \newcommand*{\diam}{\mathop{\rm diam}\nolimits}
 \newcommand*{\tdiv}{\mathop{\rm div}\nolimits}
 \newcommand*{\grad}{\mathop{\rm grad}\nolimits}
 \newcommand*{\graph}{\mathop{\rm graph}\nolimits}
 \newcommand*{\id}{{\rm id}}
 \newcommand*{\supp}{\mathop{\rm supp}\nolimits}
 \newcommand*{\vol}{\mathop{\rm vol}\nolimits}
 \newcommand*{\Lis}{{\mathcal L}{\rm is}}
 \newcommand*{\loc}{{\rm loc}}
 \renewcommand*{\Re}{\mathop{\rm Re}\nolimits} 
 \newcommand*{\imi}{{\mit i\kern1pt}}
 \newcommand*{\mi}[1]{{#1\textrm-}}       
 \newcommand*{\is}{\subset}
 \newcommand*{\bt}{\bullet}
 \newcommand*{\es}{\emptyset}
 \newcommand*{\iy}{\infty}
 \newcommand*{\mt}{\mapsto}
 \newcommand*{\nat}{\natural}
 \newcommand*{\nag}{\na_{\cona\cona g}}
 \newcommand*{\naka}{\na_{\cona\ka}}
 \newcommand*{\naSa}{\na_{\cona\Sa}}
 \newcommand*{\pl}{\partial}
 \newcommand*{\pa}{\partial^\alpha}
 \newcommand*{\coc}{\kern.75pt}
 \newcommand*{\cona}{\kern-1pt}
 \newcommand*{\coU}{\kern-1pt}
 \newcommand*{\coW}{\kern-1pt}
 \newcommand*{\AB}{(\cA,\cB)}
 \newcommand*{\ArBr}{(\cA_r,\cB_r)}
 \newcommand*{\EE}{(E,E)}
 \newcommand*{\EF}{(E,F)}
 \newcommand*{\Hmgm}{(\BH^m,g_m)}
 \newcommand*{\Mg}{(M,g)}
 \newcommand*{\Migi}{(M_i,g_i)}
 \newcommand*{\whMwhg}{(\widehat M,\widehat g)}
 \newcommand*{\MV}{(M,V)}
 \newcommand*{\Nh}{(N,h)}
 \newcommand*{\oOgm}{(\oO,g_m)}
 \newcommand*{\RC}{(\BR,\BC)}
 \newcommand*{\Rmgm}{(\BR^m,g_m)}
 \newcommand*{\cRcRc}{(\cR,\cR^c)}
 \newcommand*{\Vg}{(V,g)}
 \newcommand*{\BXF}{(\BX,F)}
 \newcommand*{\BUC}{BU\kern-.3ex C}
 %The following definition of ALIGNED is here
 %to make sure that always the same one is used.
 %I encountered a few ugly surprises, since different versions of TEX
 %use different definitions.
 %Since this paper contains some VERY delicate constructions,
 %this definition is ABSOLUTELY necessary
 %It will be valid only in THIS paper and not affect anything else
 %-------------------------------------------------------------------------
 %-------------------------------------------------------------------------
 %-------------------------------------------------------------------------
 \makeatletter
 \newif\ifinany@
 \newcount\column@
 \def\column@plus{%
    \global\advance\column@\@ne
 }
 \newcount\maxfields@
 \def\add@amps#1{%
    \begingroup
        \count@#1
        \DN@{}%
        \loop
            \ifnum\count@>\column@
                \edef\next@{&\next@}%
                \advance\count@\m@ne
        \repeat
    \@xp\endgroup
    \next@
 }
 \def\Let@{\let\\\math@cr}
 \def\restore@math@cr{\def\math@cr@@@{\cr}}
 \restore@math@cr
 \def\default@tag{\let\tag\dft@tag}
 \default@tag
 \newbox\strutbox@
 \def\strut@{\copy\strutbox@}
 \addto@hook\every@math@size{%
  \global\setbox\strutbox@\hbox{\lower.5\normallineskiplimit
         \vbox{\kern-\normallineskiplimit\copy\strutbox}}}
 \renewcommand{\start@aligned}[2]{%  
    \RIfM@\else
        \nonmatherr@{\begin{\@currenvir}}%
    \fi
    \null\,%
    \if #1t\vtop \else \if#1b \vbox \else \vcenter \fi \fi \bgroup
        \maxfields@#2\relax
        \ifnum\maxfields@>\m@ne
            \multiply\maxfields@\tw@
            \let\math@cr@@@\math@cr@@@alignedat
        \else
            \restore@math@cr
        \fi
        \Let@
        \default@tag
        \ifinany@\else\openup\jot\fi
        \column@\z@
        \ialign\bgroup
           &\column@plus
            \hfil
            \strut@
            $\m@th\displaystyle{##}$%
           &\column@plus
            $\m@th\displaystyle{{}##}$%
            \hfil
            \crcr
 }
 \renewenvironment{aligned}[1][c]{%   
    \start@aligned{#1}\m@ne
 }{%
    \crcr\egroup\egroup
 }
 \makeatother
 %-------------------------------------------------------------------------
 %-------------------------------------------------------------------------
 %-------------------------------------------------------------------------
\title{Maximal Regularity of Parabolic Transmission Problems} 
\author{Herbert Amann} 
\date{}
\maketitle
\begin{center} 
{\small 
Dedicated to Matthias Hieber, a~pioneer of maximal regularity,\\  
on the occasion of his sixtieth birthday} 
\end{center} 
\begin{abstract} 
Linear reaction-diffusion equations with inhomogeneous boundary and
transmission conditions are shown to possess the property of maximal 
\hbox{$L_p$~regularity}. The new feature is the fact that the transmission 
interface is allowed to intersect the boundary of the domain transversally. 
\end{abstract}
\makeatletter
\def\blfootnote{\xdef\@thefnmark{}\@footnotetext}
\makeatother 
\blfootnote{
2010 Mathematics Subject Classification. 35K10 35K57 35K65 58J32\\ 
Key words and phrases: 
Linear reaction-diffusion equations, 
inhomogeneous boundary and transmission conditions, 
interfaces with boundary intersection, 
maximal regularity, 
Riemannian manifolds with bounded geometry and singularities, 
weighted Sobolev spaces.} 
%=========================================================
%=========================================================
\setcounter{section}{0}
\section{Introduction}\label{sec-I} 
The emerging and understanding of the theory of maximal regularity for  
par\-a\-bol\-ic differential equations, which took place within the last 
three or 
so decades, has provided a firm basis for a successful handling of many 
challenging nonlinear problems. Among them, phase transition issues play a 
particularly prominent role. The impressive progress which has been made in 
this field with the help of maximal regularity techniques is 
well-documented in the book by J.~Pr{\"u}ss and G.~Simonett~\cite{PrS16a}.  
The reader may also consult the extensive list of references 
and the `Bibliographic Comments' in~\cite{PrS16a} for works of 
other authors and historical developments. 

\par 
The relevant mathematical setup is usually placed in the framework of 
parabolic equations in bounded Euclidean domains, the interface 
being modeled as a hypersurface. In most works known to the author it is 
assumed that the interface lies in the interior of the domain. Noteworthy 
exceptions are the papers by 
M.~Wilke~\cite{Wil17a}, J.~Pr\"{u}ss, G.~Simonett, and 
M.~Wilke~\cite{PSW19a}, H.~Abels, M.~Rauchecker, and M.~Wilke~\cite{ARW19a}, 
and M.~Rauchecker~\cite{Rau20a} who study various important parabolic free 
boundary problems, presupposing that the membrane makes a ninety degree 
boundary contact. In addition, in all of them, except for~\cite{ARW19a}, 
a~capillary (i.e.,~cylindrical) geometry is being studied. 
The same 
ninety degree condition is employed by H.~Garcke and 
M.~Rauchecker~\cite{GaR19a} who carry out a linearized stability 
computation at a stationary solution of a Mullins--Sekerka flow in~a 
two-dimensional bounded domain. 

\par 
The assumption of the ninety degree contact considerably simplifies the 
analysis since it allows to use reflection arguments. This does not apply in 
the case of general transversal intersection. 

\par 
The only paper, we are aware of, in which a general contact angle is being 
considered is the one by 
Ph.~Lauren\c{c}ot and Ch.~Walker~\cite{LaW20a}. These authors establish the 
unique solvability in the strong \hbox{$L_2$~sense} of a two-dimensional 
stationary transmission problem taking advantage of a particularly 
favorable geometric setting. 

\par 
Elliptic problems with boundary and transmission conditions have also been 
investigated in a series of papers by V.~Nistor and coworkers 
\cite{BMNZ10a}, \cite{LMN10a}, \cite{LNQ13a}, and~\cite{MaNi10a}. The 
motivation for these works stems from the desire to get optimal convergence 
rates for approximations used for numerical computations. Although these 
authors employ weighted \hbox{$L_2$~Sobolev} spaces, their methods 
and results are quite different from the ones presented here. 

\par 
In this paper we establish the maximal regularity of linear 
inhomogeneous parabolic 
transmission boundary value problems for the case where the interface 
intersects the boundary transversally. This is achieved by allowing the 
equations to degenerate near the intersection manifold and working in 
suitable weighted Sobolev spaces. We restrict ourselves to the simplest 
case of a fixed membrane and a single reaction-diffusion equation. 

\par 
In a forthcoming publication we shall use our present result to establish the 
local well-posedness of quasilinear equations with nonlinear boundary and 
transmission conditions. 

\par 
The author is deeply grateful to G.~Simonett for carefully reading the first 
draft of this paper, valuable suggestions, and pointing out misprints, 
errors, and the above references to related moving boundary problems. 
%=========================================================
%=========================================================
\section{The Main Result}\label{sec-R} 
Now we outline---in a slightly sketchy way---the 
main result of this paper. Precise definitions of notions, facts, and 
function spaces which we use here without further explanation, are given in 
the subsequent sections. 

\par 
Let $\Om$ be a bounded domain in~$\BR^m$, 
\,\hb{m\geq2}, with a smooth boundary~$\Ga$ lying locally on one side 
of~$\Om$. By a membrane in~$\oO$ we mean a smooth oriented 
hypersurface~$S$ of (the manifold)~$\oO$ with~a (possibly empty) 
boundary~$\Sa$ such that 
\hb{S\cap\Ga=\Sa}. Thus $S$~lies in~$\Om$ if 
\hb{\Sa=\es}. Otherwise, $\Sa$ is an 
\hb{(m-2)}-dimensional oriented smooth submanifold of~$\Ga$. In this case 
it is assumed that $S$ and~$\Ga$ intersect transversally. Note that 
we do not require that $S$~be connected. Hence, even if 
\hb{\Sa\neq\es}, there may exist interior membranes. However, the focus 
in this paper is on membranes with boundary. Thus we assume until further 
notice that 
\hb{\Sa\neq\es}. 

\par 
We denote by~$\nu$ the inner (unit) normal (vector field) on~$\Ga$ and 
by~$\nu_S$ the positive normal on~$S$. (Thus 
\hb{\nu_S(x)\in T_x\oO=T_x\BR^m=\{x\}\times\BR^m}, the latter being also 
identified with 
\hb{x+\BR^m\is\BR^m} for 
\hb{x\in S}.) As usual, 
\hb{\eesdot=\eesdot_S} is the jump across~$S$. We fix any 
\hb{T\in(0,\iy)} and set 
\hb{J=J_T:=[0,T]}. 

\par 
Of concern in this paper are linear reaction-diffusion equations with 
nonhomogeneous boundary and transmission conditions of the following form. 

\par 
Set 
$$ 
\bal 
\cA u 
&:=-\tdiv(a\grad u),\ \cB u:=\ga a\pl_\nu u,\cr 
\cC^0 u 
&:=\eea{u},\ \cC^1u:=\eea{a\pl_{\nu_S}u},\ \cC=(\cC^0,\cC^1), 
\eal  
\npbd  
$$ 
with $\ga$~being the trace operator on~$\Ga$. We assume (for the moment) 
that 
\hb{a\in\bar C^1\bigl((\oO\ssm S)\times J\bigr)} and 
\hb{a>0}. A~bar over a symbol for a standard function space means that its 
elements may undergo jumps across~$S$. (The usual definitions based on 
decompositions of 
\hb{\oO\ssm S} in `inner' and `outer' domains cannot be used since 
\hb{\oO\ssm S} may be connected.) Then the problem under investigation reads: 
\begin{equation}\label{R.RD} 
\bal 
\pl_tu+\cA u 
&=f 
 &&\text{ on }(\oO\ssm S)\times J,\cr 
\cB u 
&=\vp 
 &&\text{ on }(\Ga\ssm\Sa)\times J,\cr 
\cC u 
&=\psi 
 &&\text{ on }(S\ssm\Sa)\times J,\cr 
\ga_0u 
&=u_0 
 &&\text{ on }(\oO\ssm S)\times\{0\},\cr 
\eal 
\npbd 
\end{equation}  
where $\ga_0$~is the trace operator at 
\hb{t=0}.\po  

\par 
We are interested in the strong \hbox{$L_p$~solvability} of \eqref{R.RD}, 
that is, in solutions possessing second order space derivatives in~$L_p$. 
However, 
since $S$~intersects~$\Ga$, we cannot hope to get solutions which possess 
this regularity up to~$\Sa$. Instead, it is to be expected that the 
derivatives of~$u$ blow up as we approach~$\Sa$. For this reason 
we set up our problem in weighted Sobolev spaces where the weights 
control the behavior of~$\pa u$ for 
\hb{0\leq|\al|\leq2} in relation to the distance from~$\Sa$. This requires 
that the differential operator is adapted to such a setting, which means 
that the adapted `diffusion coefficient' tends to zero near~$\Sa$. 
In other words: we will have to deal with parabolic problems which 
degenerate near~$\Sa$. To describe the situation precisely, we introduce 
curvilinear coordinates near~$\Sa$ as follows. 

\par 
Since $\Sa$~is an oriented hypersurface in~$\Ga$, there exists a unique 
positive normal vector field~$\mu$ on~$\Sa$ in~$\Ga$. Given 
\hb{\sa\in\Sa}, we write~$\mu(\cdot,\sa)$ for the unique geodesic in~$\Ga$ 
satisfying 
\hb{\mu(0,\sa)=\sa} and 
\hb{\dot\mu(0,\sa)=\mu(\sa)}. Similarly, for each 
\hb{y\in\Ga} we set 
\hb{\nu(\xi,y):=y+\xi\nu(y)} for 
\hb{\xi\geq0}. Then we can choose 
\hb{\ve\in(0,1)} and a neighborhood~$\wt{U}(\ve)$ of~$\Sa$ in~$\oO$ 
with the following properties: for each 
\hb{x\in\wt{U}(\ve)} there exists  a unique triple 
$$ 
(\xi,\eta,\sa)\in N(\ve)\times\Sa 
\qa N(\ve):=[0,\ve)\times(-\ve,\ve), 
$$ 
such that 
\begin{equation}\label{R.x} 
x=x(\xi,\eta,\sa):=\nu\bigl(\xi,\mu(\eta,\sa)\bigr). 
\npbd 
\end{equation}  
Thus 
\hb{x\in\Ga\cap\wt{U}(\ve)} iff 
\hb{(\xi,\eta,\sa)\in\{0\}\times(-\ve,\ve)\times\Sa}.\po  

\par 
Now we define curvilinear derivatives for 
\hb{u\in C^2\bigl(\wt{U}(\ve)\bigr)} by 
\begin{equation}\label{R.nu} 
\pl_\nu u(x)=\pl_1(u\circ x)(\xi,\eta,\sa) 
\qb \pl_\mu u(x):=\pl_2(u\circ x)(\xi,\eta,\sa) 
\npbd 
\end{equation}  
for 
\hb{x\in\wt{U}(\ve)}. It follows that\footnote{If  
\hb{m=2}, then the last term must be disregarded. It is understood that 
similar interpretations and adaptions are to be made throughout this paper.} 
\begin{equation}\label{R.Auv} 
\cA u 
=-\bigl(\pl_\nu(a\pl_\nu u)+\pl_\mu(a\pl_\mu u) 
+\tdiv_\Sa(a\grad_\Sa u)\bigr) 
\end{equation}  
on~$\wt{U}(\ve)$, 
where $\tdiv_\Sa$ and~$\grad_\Sa$ denote the divergence and the gradient, 
respectively, in~$\Sa$ (with respect to the Riemannian metric~$g_\Sa$ 
induced by the one of~$\Ga$ which, in turn, is induced by the 
Euclidean metric on~$\oO$). 

\par 
For~$x$ given by~\eqref{R.x}, we set 
\begin{equation}\label{R.r} 
r(x):=\sqrt{\xi^2+\eta^2} 
\qa (\xi,\eta)\in N(\ve), 
\end{equation}  
which is the geodesic distance in~$\oO$ from~$x$ to~$\Sa$ 
(and not, in general, the distance in the ambient space~$\BR^m$). We fix 
\hb{\om\in C^\iy\bigl(N(\ve),[0,1]\bigr)}, depending only on~$r$, such that 
\hb{\om\sn N(\ve/3)=1} and 
\hb{\supp(\om)\is N(2\ve/3)} and set 
\begin{equation}\label{R.rh} 
\rho:=1-\om+r\om. 
\end{equation}  
Then we define on 
\begin{equation}\label{R.U} 
U:=U(\ve):=\wt{U}(\ve)\ssm\Sa 
\end{equation}  
a~singular linear reaction-diffusion operator~$\cA_U$ by 
\begin{equation}\label{R.AU} 
\cA_Uu:=-\rho^2\bigl(\pl_\nu(a\pl_\nu u)+\pl_\mu(a\pl_\mu u)\bigr) 
-\tdiv_\Sa(a\grad_\Sa u) 
\end{equation}  
for 
\hb{u\in\bar C^2(U\ssm S)}. The corresponding singular boundary 
operator is given by 
\begin{equation}\label{R.BU} 
\cB_Uu:=\ga a\rho\pl_\nu u. 
\end{equation}  

\par 
Since $S$~intersects~$\Ga$ transversally, it follows that there exists 
a smooth function 
\hb{s\sco[0,\ve)\times\Sa\ra(-\ve,\ve)} such that 
\hb{s(0,\sa)=0} for 
\hb{\sa\in\Sa} and 
\begin{equation}\label{R.xS} 
x\in\wt{U}(\ve)\cap S 
\quad\text{iff}\quad 
x=\bigl(\xi,s(\xi,\sa),\sa\bigr) 
\qa (\xi,\sa)\in[0,\ve)\times\Sa. 
\end{equation} 
Using this we associate  with~$\cA_U$ a~transmission 
operator~$\cC_U$ on~$U$ by setting 
$$ 
\bal 
\cC_U^0u 
&:=\eea{u}_{U\cap S},\cr  
\cC_U^1u 
&:=\beea{a\bigl(\nu_S^1\pl_\nu u+\nu_S^2\pl_\mu u 
+\nu_S^3(\grad_\Sa u\sn\grad_\Sa s)\bigr)_\Sa}_{U\cap S} 
\eal 
$$ 
for 
\hb{u\in\bar C^2(U\ssm S)}, where 
\hb{\prsn_\Sa=g_\Sa} and 
$$ 
(\nu_S^1,\nu_S^2,\nu_S^3) 
:=(\pl_\nu s,-1,1)\Big/\sqrt{1+(\pl_\nu s)^2+|\grad_\Sa s|_\Sa^2}.   
$$ 
Now we define a~\emph{singular transmission boundary value problem on} 
\hb{\oO\ssm S}~by putting 
\hb{V:=\oO\,\big\backslash\wt{U}(2\ve/3)} and 
$$ 
(\cA_r,\cB_r,\cC_r):=  
\left\{ 
\bal 
{}%  do not remove!!!!
&(\cA,\cB,\cC)
 &&\ \text{on }V,\cr 
&(\cA_U,\cB_U,\cC_U)  
 &&\ \text{on }U. 
\eal 
\right. 
\npbd  
$$ 
It follows from \eqref{R.Auv} and the properties of~$\rho$ that this 
definition is unambiguous.\po   

\par 
To introduce weighted Sobolev spaces on 
\hb{U\ssm S} we put 
\begin{equation}\label{R.us} 
\bal 
{}% do not remove! 
\dl u\dr^2:=|u|^2 
&+|r\pl_\nu u|^2 
 +|r\pl_\mu u|^2\cr  
&+|(r\pl_\nu)^2u|^2 
 +|r\pl_\nu(r\pl_\mu u)|^2
 +|r\pl_\mu(r\pl_\nu u)|^2 
 +|(r\pl_\mu)^2u|^2\cr
&+|\naSa u|^2+|\naSa^2u|^2, 
\eal 
\end{equation}  
where $\naSa$~is the Levi--Civita connection on~$\Sa$ 
for the metric~$g_\Sa$. Moreover, 
\hb{1<p<\iy} and 
\begin{equation}\label{R.WU} 
\|u\|_{\bar W_{\coW p}^2(U\setminus S;r)} 
:=\Bigl(\int_{U\setminus S}\dl u\dr^p\,\frac{d(\xi,\eta)}{r^2} 
\,d\vol_\Sa\Bigr)^{1/p}. 
\end{equation}  
Then 
\hb{\bar W_{\coW p}^2(U\ssm S;r)} is the completion of 
\hb{\bar C^2(U\ssm S)} in 
\hb{L_{1,\loc}(U\ssm S)} with respect to the norm~% 
\hb{\Vsdot_{\bar W_{\coW p}^2(U\setminus S;r)}}. 

\par 
The (global) weighted Sobolev space 
$$ 
\cX_p^2:=\bar W_{\coW p}^2(\oO\ssm S;r) 
$$ 
consists of all 
\hb{u\in L_{1,\loc}(\oO\ssm S)} with 
\hb{u\bsn U\in\bar W_{\coW p}^2(U\ssm S;r)} and 
\hb{u\bsn V\in\bar W_{\coW p}^2(V\ssm S)}. It is a Banach space 
with the norm 
$$ 
u\mt\big\|u\sn U\big\|_{\bar W_{\coW p}^2(U\setminus S;r)} 
+\big\|u\sn V\big\|_{\bar W_{\coW p}^2(V\setminus S)}, 
$$ 
whose topology is independent of the specific choice of $\ve$ and~$\om$. 
Similarly, the Lebesgue space 
$$ 
\cX_p^0:=\bar W_{\coW p}^0(\Om\ssm S;r) 
$$ 
is obtained by replacing~$\dl u\dr$ in \eqref{R.WU} by~$|u|$. Moreover, 
\begin{equation}\label{R.int} 
\cX_p^{2-2/p}:=
\bar W_{\coW p}^{2-2/p}(\oO\ssm S;r) 
:=(\cX_p^0,\cX_p^2)_{1-1/p,p},  
\npbd 
\end{equation}  
where 
\hb{\prsn_{\ta,p}} is the real interpolation functor of exponent~$\ta$.\po  

\par 
We also need time-dependent anisotropic spaces. For this we use the notation 
\hb{s/\mf2:=(s,\,s/2)}, 
\ \hb{0\leq s\leq2}. Then 
$$ 
\cX_p^{2/\mf2} 
:=\bar W_{\coW p}^{2/\mf2}\bigl((\oO\ssm S)\times J;r\bigr) 
:=L_p(J,\cX_p^2)\cap W_{\coW p}^1(J,\cX_p^0)
$$  
and 
\hb{\cX_p^{0/\mf2}:=L_p(J,\cX_p^0)}. If 
\hb{X\in\{\Ga,S\}} and 
\hb{s\in\{1-1/p,\ 2-1/p\}}, then 
$$ 
\bar W_{\coW p}^{s/\mf2}\bigl((X\ssm\Sa)\times J;r\bigr) 
:=L_p\bigl(J,\bar W_{\coW p}^s(X\ssm\Sa;r)\bigr) 
\cap W_{\coW p}^{s/2}\bigl(J,L_p(X\ssm\Sa;r)\bigr). 
$$ 
Here the 
\hb{\bar W_{\coW p}^s(X\ssm\Sa;r)} are trace spaces of~$\cX_p^2$
(cf.~\eqref{MB.Ga} and \eqref{MB.S}). Moreover, 
$$ 
\bal 
\cY_p 
 :=\bar W_{\coW p}^{(1-1/p)/\mf2}\bigl((\Ga\ssm\Sa)\times J;r\bigr) 
&\oplus\bar W_{\coW p}^{(2-1/p)/\mf2}\bigl((S\ssm\Sa)\times J;r\bigr)\cr  
&\oplus\bar W_{\coW p}^{(1-1/p)/\mf2}\bigl((S\ssm\Sa)\times J;r\bigr).  
\eal 
$$ 

\par 
By 
\hb{\bar{BC}(\oO\ssm S)} we mean the space of bounded and continuous 
functions (with possible jumps across~$S$), endowed with the maximum norm. 
Then 
\hb{\bar{BC}^1(\oO\ssm S;r)} is the Banach space of all 
\hb{u\in\bar{BC}(\oO\ssm S)} with 
\hb{\pl_ju\in\bar{BC}(V\ssm S)}, 
\ \hb{1\leq j\leq m}, and 
$$ 
\rho\pl_\nu u,\rho\pl_\mu u\in\bar{BC}(U\ssm S) 
\qb u\sn\Sa\in BC^1(\Sa). 
$$ 
Furthermore, 
$$ 
\bar{BC}^{1/\mf2}\bigl((\oO\ssm S)\times J;r\bigr) 
:=C\bigl(J,\bar{BC}^1(\oO\ssm S;r)\bigr) 
\cap C^{1/2}\bigl(J,\bar{BC}(\oO\ssm S)\bigr). 
$$ 
To indicate the nonautonomous structure of \eqref{R.RD}, we write 
\hb{a(t):=a(\cdot,t)} and, correspondingly, $\cA(t)$, $\cB(t)$, and~$\cC(t)$. 

\par 
Now we are ready to formulate the main result of this paper, the optimal 
solvability of linear reaction-diffusion transmission 
boundary value problems. 
%=========================================================
\begin{theorem}\label{thm-R.RD} 
Let 
\hb{1<p<\iy} with 
\hb{p\notin\{3/2,\,3\}} and 
$$ 
a\in\bar{BC}^{1/\mf2}\bigl((\oO\ssm S)\times J;r\bigr) 
\qa a\geq\ul{\al}, 
$$ 
for some 
\hb{\ul{\al}\in(0,1)}. Suppose 
$$ 
\bigl(f,(\vp,\psi^0,\psi^1),u_0\bigr) 
\in\cX_p^{0/\mf2}\oplus\cY_p\oplus\cX_p^{2-2/p} 
$$ 
and that the following compatibility conditions are satisfied:
$$ 
\bal 
{\rm(i)}\quad 
\cC_r^0(0)u_0 
&=\psi^0(0), 
&\quad\text{if\/ }&&3/2<p<3,\cr 
{\rm(ii)}\quad 
\cB_r(0)u_0 
&=\vp(0),  
 \ \cC_r(0)u_0=\psi(0), 
&\quad\text{if\/ }&&p>3, 
\eal 
$$ 
where 
\hb{\psi:=(\psi^0,\psi^1)}. Then 
\begin{equation}\label{R.P} 
\bal 
\bal 
\pl_tu+\cA_r(t)u    &=f      &&\text{ on }(\oO\ssm S)\times J,\cr 
\cB_r(t)u           &=\vp    &&\text{ on }(\Ga\ssm\Sa)\times J,\cr 
\cC_r(t)u           &=\psi   &&\text{ on }(S\times\Sa)\times J,\cr 
\ga_0u              &=u_0    &&\text{ on }(\oO\ssm S)\times\{0\}  
\eal 
\eal 
\npbd 
\end{equation} 
has a unique solution 
\hb{u\in\cX_p^{2/\mf2}}. It depends continuously on the data.\po  
\end{theorem}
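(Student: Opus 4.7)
The plan is to establish maximal $L_p$-regularity by a localization and perturbation argument. First, I would reduce to the case $u_0=0$ by choosing a lifting $\bar u\in\cX_p^{2/\mf2}$ of $u_0$ via~\eqref{R.int} and solving for $u-\bar u$; the compatibility conditions in~(i) and~(ii) then match the modified boundary and transmission data with the vanishing-trace subspaces of $\cY_p$ at $t=0$. Next, I would choose a finite partition of unity $\{\chi_j\}$ subordinate to a cover of $\oO$ of four types: (a)~balls lying in~$V$, (b)~collar neighborhoods of pieces of $\Ga\ssm\Sa$, (c)~tubular neighborhoods of interior portions of $S\ssm\Sa$, and (d)~charts inside $U(\ve)$ touching~$\Sa$. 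Multiplication by $\chi_j$ and commutation with $(\cA_r,\cB_r,\cC_r)$ reduces the task to a finite family of model problems, the commutators being of lower order in the weighted scale and therefore absorbable as perturbations on short time intervals.

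On charts of type~(a)--(c), $\rho\equiv1$ on the support of $\chi_j$ and the resulting model is a uniformly parabolic boundary or transmission problem with $BC^{1/\mf2}$-coefficients and smooth interface of bounded geometry; maximal regularity is covered by the classical $L_p$-theory, cf.~Pr\"uss--Simonett~\cite{PrS16a}. The heart of the matter is the model problem on a chart of type~(d). Pulling back via~\eqref{R.x} to the wedge $N(\ve)\times\Sa$, and then flattening $S$ by $\tilde\eta:=\eta-s(\xi,\sa)$ (which fixes $\Sa$ and preserves $\Ga$), yields up to lower-order remainders the model operator
\begin{equation*}
-\rho^2\bigl(\pl_\xi(a\pl_\xi u)+\pl_{\tilde\eta}(a\pl_{\tilde\eta}u)\bigr)-\tdiv_\Sa(a\grad_\Sa u)
\end{equation*}
with membrane $\{\tilde\eta=0\}$ and degenerate Neumann condition $a\rho\pl_\xi u=\ga_{\{\xi=0\}}\text{data}$.

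The decisive step is then the change to polar coordinates $(\xi,\tilde\eta)=(r\cos\vt,r\sin\vt)$, $\vt\in[-\pi/2,\pi/2]$, followed by the logarithmic substitution $\tau:=-\log r$. Under this, the wedge unfolds to the cylinder $[\tau_0,\iy)\times[-\pi/2,\pi/2]\times\Sa$, the weighted measure $r^{-2}\,d\xi\,d\tilde\eta\,d\vol_\Sa$ becomes the standard product measure $d\tau\,d\vt\,d\vol_\Sa$, and since $\rho\sim r$ near $\Sa$ together with $r^2(\pl_\xi^2+\pl_{\tilde\eta}^2)=\pl_\tau^2+\pl_\vt^2$, the singular principal part transforms, modulo bounded coefficients, into the uniformly elliptic $\pl_\tau^2+\pl_\vt^2+\Delta_\Sa$. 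Similarly, $\rho\pl_\xi$ becomes, up to bounded factors, $\pl_\vt$ at $\vt=\pm\pi/2$, so $\cB_U$ turns into a standard Neumann condition. Thus the singular problem is converted into a uniformly parabolic transmission boundary value problem on a manifold of bounded geometry; maximal regularity for such problems is provided by the general bounded-geometry theory developed in the sequel of this paper.

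Finally, the local solution operators are patched by the partition of unity. This produces a right-inverse of the full problem up to a small-operator-norm perturbation on a short time interval~$J_\da$; a Neumann series inverts the perturbation, and iteration over $[0,T]$ extends to the full interval and yields the asserted continuous dependence. The main obstacle lies in the third paragraph: verifying that the $BC^{1/\mf2}$-regularity of~$a$ survives the pullback, that the lower-order terms generated by flattening~$S$ (involving $\pl_\nu s$ and $\grad_\Sa s$, which enter both $\cC_U^1$ and the commutators) are genuinely subordinate to the transformed principal part in the weighted scale, and that the weighted trace spaces building $\cY_p$ correspond exactly to the standard anisotropic Sobolev trace spaces on the boundary components $\{\vt=\pm\pi/2\}$ and $\{\vt=0\}$ of the bounded-geometry cylinder.
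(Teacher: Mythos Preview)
Your proposal is correct and follows essentially the same route as the paper: the polar-coordinate plus logarithmic substitution near~$\Sa$ is precisely how the paper shows (Theorem~\ref{thm-S.M}) that $(\wh M,\wh g)$ is a uniformly regular Riemannian manifold with $\wh S=S\ssm\Sa$ a boundaryless membrane, after which the general Theorem~\ref{thm-L.MR}---whose proof is the localization, model-problem, and Neumann-series patching argument you describe---applies directly. The only organizational difference is that the paper does not flatten~$S$ before the polar transform but instead shows that the transformed membrane becomes asymptotically horizontal and hence admits a uniform tubular neighborhood (step~(2) of the proof of Theorem~\ref{thm-S.M}); the identifications you flag as the ``main obstacle'' are carried out in Section~\ref{sec-MB}, in particular Proposition~\ref{pro-MB.N} for the function spaces, \eqref{MB.A} and~\eqref{MB.B} for $\cA_U$ and~$\cB_U$, and Proposition~\ref{pro-MB.S} for~$\cC_U$.
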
 
%=========================================================
\setcounter{corollary}{1} 
\begin{corollary}\label{cor-R.RD} 
Suppose $a$~is independent of~$t$, that is, 
\hb{a\in\bar{BC}^1(\oO\ssm S;r)}. Set 
$$ 
\cX_{p,0}^2:=  
\left\{ 
\bal 
{}%  do not remove!!!!
&\ \cX_p^2,  
 &\quad 1  &<p<3/2,\cr 
&\{\,u\in\cX_p^2\ ;\ \cC^0u=0\,\}, 
 &\quad 3/2&<p<3,\cr 
&\{\,u\in\cX_p^2\ ;\ \cB u=0,\ \cC u=0\,\}, 
 &\quad 3  &<p<\iy,
\eal 
\right. 
$$ 
and 
\hb{A_r:=\cA_r\sn\cX_{p,0}^2}. Then~$-A_r$, considered as a 
linear operator in~$\cX_p^0$ with domain~$\cX_{p,0}^2$, generates 
on~$\cX_p^0$ a strongly continuous analytic semigroup.  
\end{corollary}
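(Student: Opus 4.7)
The strategy is to deduce the generation result from Theorem~\ref{thm-R.RD} by applying it with homogeneous boundary and transmission data and zero initial condition, thereby obtaining maximal \hbox{$L_p$~regularity} for the abstract Cauchy problem
$$ \pl_tu+A_ru=f\text{ in }\cX_p^0,\qquad u(0)=0, $$
and then invoking the classical theorem of G.~Dore: a closed, densely defined operator on a Banach space possessing maximal \hbox{$L_p$~regularity} on a bounded interval automatically generates a strongly continuous analytic semigroup (see, e.g.,~\cite{PrS16a}). Thus the plan reduces to three tasks: establish that $A_r$ is closed and densely defined in~$\cX_p^0$, extract maximal regularity from Theorem~\ref{thm-R.RD}, and quote Dore.

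First I~check the elementary properties of~$A_r$. Well-definedness, \hb{\cA_r(\cX_{p,0}^2)\is\cX_p^0}, can be read off from~\eqref{R.Auv}, \eqref{R.AU}, and the defining norm \eqref{R.us}--\eqref{R.WU}, because every second-order term appearing in~$\cA_r$ is controlled termwise by~$\dl u\dr$. Density of~$\cX_{p,0}^2$ in~$\cX_p^0$ follows by standard truncation and smoothing in the weighted setting developed earlier. Next, I~apply Theorem~\ref{thm-R.RD} to an arbitrary \hb{f\in\cX_p^{0/\mf2}} with \hb{\vp=0}, \hb{\psi=0}, and \hb{u_0=0}; compatibility conditions~(i)--(ii) hold trivially, so the theorem produces a unique \hb{u\in\cX_p^{2/\mf2}} solving \hb{\pl_tu+\cA_ru=f} on~$J$ together with \hb{\cB_ru=0}, \hb{\cC_ru=0}, and \hb{\ga_0u=0}. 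Interpreting the vanishing of the traces pointwise in~$t$ via the mixed-derivative embedding \hb{\cX_p^{2/\mf2}\hr C(J,\cX_p^{2-2/p})} gives \hb{u(t)\in\cX_{p,0}^2} for a.e.~\hb{t\in J}, which is precisely maximal \hbox{$L_p$~regularity} of~$A_r$ on~$J$. Dore's theorem then delivers the analytic semigroup, and closedness of~$A_r$ follows as a by-product of the continuous dependence asserted in Theorem~\ref{thm-R.RD}.

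The delicate point I~anticipate is the last identification: the vanishing of the boundary and transmission data in the anisotropic trace spaces over~$J$ has to force \hb{u(t)\in\cX_{p,0}^2} for almost every~$t$, and not merely in some averaged distributional sense. This reduces to the continuous dependence of the spatial traces $\cB$, $\cC^0$, and~$\cC^1$ on \hb{u(t)\in\cX_p^{2-2/p}}, which should belong to the trace-space infrastructure established in earlier sections. Once this is in hand, everything else is a direct application of known abstract results.
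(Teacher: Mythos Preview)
Your approach is essentially identical to the paper's: the paper's proof says only that Theorem~\ref{thm-R.RD} gives maximal $\cX_p^0$~regularity for~$A_r$, and that this is well-known to imply the generation claim (citing \cite{Ama95a} and~\cite{DHP07a} rather than Dore via~\cite{PrS16a}, but the underlying result is the same). You have simply spelled out the auxiliary checks and the abstract implication that the paper leaves to those references.
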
 
%---------------------------------------------------------
\begin{proof} 
The theorem implies that $A_r$~has the property of maximal 
\hbox{$\cX_p^0$~regularity}. This fact is well-known to imply the claim 
(e.g.,~\cite[Capter~III]{Ama95a} or~\cite{DHP07a}).  
\end{proof} 
Theorem~\ref{thm-R.RD} is a special case of the much more general 
Theorems \ref{thm-L.MR} and \ref{thm-MB.P}. They also include Dirichlet 
boundary conditions and apply to transmission problems in general 
Riemannian manifolds with boundary and bounded geometry.  

\par 
The situation is considerably simpler if 
\hb{\Sa=\es}, that is, if only interior transmission hypersurfaces 
are present. Of course, if 
\hb{S=\es}, then \eqref{R.P} reduces to a linear reaction-diffusion 
equation with inhomogeneous boundary conditions. In these cases no 
degenerations do occur. 

\par 
We refrain from considering operators~$\ArBr$ with lower 
order terms. This case will be covered by the forthcoming 
quasilinear result. 

\par 
In the case of an interior transmission surface (that is, 
\hb{\Sa=\es}) and if $a$~is independent of~$t$, Theorem~\ref{thm-R.RD} 
is a special case of Theorem~6.5.1 in~\cite{PrS16a}. The latter 
theorem applies to systems and provides an 
\hbox{$L_p$-$L_q$} theory. 

\par 
If 
\hb{\Sa\neq\es}, then the basic difficulty in proving Theorem~\ref{thm-R.RD} 
stems from the fact that 
\hb{\oO\ssm\Sa} and, consequently, 
\hb{S\ssm\Sa} and 
\hb{\Ga\ssm\Sa}, are no longer compact. The fundamental observation which 
makes the proofs work is the fact that we can consider 
\hb{\oO\ssm\Sa} as a (noncompact) Riemannian manifold with a metric~$g$ 
which coincides on 
\hb{U(\ve/3)} with the singular metric 
\hb{r^{-2}d\nu\otimes d\mu+g_\Sa} and on~$V$ with the Euclidean metric. 
With respect to this metric, $\cA_r$~is a uniformly elliptic operator. 

\par 
Theorems \ref{thm-H.C}  and~\ref{thm-S.M} show that 
\hb{(\oO\ssm\Sa,\,g)} is a uniformly regular Riemannian manifold in the sense 
of~\cite{Ama12b}. Thus we are led to consider linear parabolic equations with 
boundary and transmission conditions on such manifolds. As in the compact 
case, by means of local coordinates the problem is reduced to Euclidean 
settings. However, since we have to deal with noncompact manifolds, we have 
to handle simultaneously infinitely many model problems. In order for this 
technique to work, we have to establish uniform estimates which are in a 
suitable sense independent of the specific local coordinates. In addition, 
special care has to be taken in `gluing together the local model problems'. 
These are no points to worry about in the compact case. 

\par
In our earlier paper~\cite{Ama17a} we have established an optimal existence 
theory for linear parabolic equations on uniformly regular Riemannian 
manifolds without boundary. The present proof extends those arguments to 
the case of manifolds with boundary. The presence of boundary and 
transmission conditions adds considerably to the complexity of the problem 
and makes the paper rather heavy. 

\par 
In Section~\ref{sec-U} we collect the needed background information. 
In the subsequent two sections we establish the differential geometric 
foundation of transmission surfaces in uniformly regular and singular 
Riemannian manifolds.

\par 
After having introduced the relevant function spaces in 
Section~\ref{sec-F}, we present in Section~\ref{sec-L} the basic maximal 
regularity theorem in anisotropic Sobolev spaces for linear 
non-autonomous reaction-diffusion equations with nonhomogeneous boundary and 
transmission conditions on general uniformly regular Riemannian manifolds. 
Its rather complex proof occupies the next 
five sections. Finally, in the last section it is shown that our general 
results apply to the Euclidean setting presented here. 
%=========================================================
%=========================================================
\section{Uniformly Regular Riemannian Manifolds}\label{sec-U} 
In this section we recall the definition of uniformly regular Riemannian 
manifolds and collect those 
properties of which we will make use. Details can be found in 
\cite{Ama12c}, \cite{Ama12b}, \cite{Ama15a}, and in the comprehensive 
presentation~\cite{AmaVolIII21a}. Thus we shall be rather brief. 

\par 
We use standard notation from differential geometry and function space 
theory. In particular, an upper, resp.~lower, asterisk on a symbol for a 
diffeomorphism denominates the corresponding pull-back, resp.\ push-forward 
(of tensors). 
By~$c$, resp.~$c(\al)$ etc., we denote constants~%  
\hb{\geq1} which can vary from occurrence  to occurrence. 
Assume $S$~is a nonempty set. On the cone of nonnegative functions on~$S$ 
we define an equivalence relation~% 
\hb{{}\sim{}} by 
\hb{f\sim g} iff 
\hb{f(s)/c\leq g(s)\leq cf(s)}, 
\ \hb{s\in S}. 

\par
An \hbox{$m$-dimensional} manifold is a separable metrizable space equipped 
with an \hbox{$m$-dimensional} smooth structure. We always work in the 
smooth category. 

\par 
Let $M$ be an \hbox{$m$-dimensional} manifold with or without boundary. If 
$\ka$~is a local chart, then we use~$U_{\coU\ka}$ for its domain, the 
coordinate patch associated with~$\ka$. The chart is normalized if 
\hb{\ka(U_{\coU\ka})=Q_\ka^m}, where 
\hb{Q_\ka^m=(-1,1)^m} if 
\hb{U\is\ci M}, the interior of~$M$, and 
\hb{Q_\ka^m=[0,1)\times(-1,1)^{m-1}} otherwise. An atlas~$\gK$ is normalized 
if it consists of normalized charts. It is shrinkable if it normalized and 
there exists 
\hb{r\in(0,1)} such that 
\hb{\bigl\{\,\ka^{-1}(rQ_\ka^m)\ ;\ \ka\in\gK\,\bigr\}} is a covering 
of~$M$. It has finite multiplicity if there exists 
\hb{k\in\BN} such that any intersection of more than~$k$ coordinate 
patches is empty. 

\bigskip 
The atlas~$\gK$ is \emph{uniformly regular}~(ur) if 
\begin{equation}\label{U.K} 
\bal 
{\rm(i)}\quad 
&\text{it is shrinkable and has finite multiplicity;}\cr 
{\rm(ii)}\quad 
&\wk\circ\ka^{-1}\in\BUC^\iy\bigl(\ka(U_\kwk),\BR^m\bigr)
 \text{ and }\|\wk\circ\ka^{-1}\|_{k,\iy}\leq c(k)\cr 
\noalign{\vskip-1\jot} 
&\text{for }\ka,\wk\in\gK\text{ and }k\in\BN,
 \text{ where }U_\kwk:=U_{\coU\ka}\cap U_\wk.   
\eal 
\end{equation}  
Two ur atlases $\gK$ and~$\wt{\gK}$ are equivalent if 
\begin{equation}\label{U.e} 
\kern-8pt 
\bal 
{\rm(i)}\quad 
&\text{there exists $k\in\BN$ such that each coordinate patch of $\gK$}\cr 
\noalign{\vskip-1.5\jot} 
&\text{meets at most $k$ coordinate patches of $\wt{\gK}$, 
 and vice versa;}\cr 
{\rm(ii)}\quad 
&\text{condition~\eqref{U.K}(ii) holds for all } 
 (\ka,\wt{\ka})\text{ and }(\wt{\ka},\ka)\text{ belonging to } 
 \gK\times\wt{\gK}.   
\eal 
\kern-10pt 
\end{equation}  
This defines an equivalence relation in the class of all ur~atlases. An 
equivalence class thereof is a ur~structure. By~a \emph{ur~manifold} 
we mean a manifold equipped with a ur~structure. 
A~Riemannian metric~$g$ on a~ur manifold~$M$ is~\emph{ur} if, 
given a~ur atlas~$\gK$,
\begin{equation}\label{U.g} 
\bal 
{\rm(i)}\quad 
&\ka_*g\sim g_m,\ \ka\in\gK;\cr 
{\rm(ii)}\quad 
&\|\ka_*g\|_{k,\iy}\leq c(k),\ \ka\in\gK,\ k\in\BN.    
\eal 
\end{equation}  
Here 
\hb{g_m:=\prsn=dx^2} is the Euclidean 
metric\footnote{We use the same symbol for a Riemannian metric 
and its restrictions to sub\-manifolds of the same dimension.} 
on~$\BR^m$ and (i)~is understood 
in the sense of quadratic forms. This concept is well-defined, 
independently of the specific~$\gK$. A~\emph{uniformly regular Riemannian} 
(urR) \emph{manifold} is~a ur~manifold, endowed with~a urR metric. 
%========================================================= 
\setcounter{remarks}{0} 
\begin{remarks}\label{rem-U.R}  
(a) 
Given a (nonempty) 
subset~$S$ of~$M$ and an atlas~$\gK$, 
$$ 
\gK_S:=\{\,\ka\in\gK\ ;\ U_{\coU\ka}\cap S\neq\es\,\}\ . 
$$ 
We say that $\gK$~is normalized \emph{on}~$S$, resp.\ has finite 
multiplicity \emph{on}~$S$, resp.\ is shrinkable \emph{on}~$S$ 
if $\gK_S$~possesses the respective properties. Moreover, $\gK$~is~ur 
\emph{on}~$S$ if \eqref{U.K} applies with~$\gK$ replaced 
by~$\gK_S$. Similarly, two atlases $\gK$ and~$\wt{\gK}$, which are 
ur \emph{on}~$S$, are equivalent \emph{on}~$S$ if \eqref{U.e} holds with 
$\gK$ and~$\wt{\gK}$ replaced by $\gK_S$  and~$\wt{\gK}_S$, respectively. 
This induces a \hbox{ur~structure} \emph{on}~$S$. Finally, $M$~is~ur 
\emph{on}~$S$ if it is equipped with~a \hbox{ur~structure} on~$S$. 

\par 
(b) 
Suppose $\gK$~is a ur atlas for~$M$ on~$S$. Given any 
\hb{\ve>0}, there exists a ur atlas~$\gK'$ on~$S$ such that 
\hb{\diam_g(U_{\coU\ka})<\ve} for 
\hb{\ka\in\gK'}, where $\diam_g$~is the diameter with respect to the 
Riemannian distance~$d_g$.\qed 
\end{remarks} 
In the following examples we use the natural ur~structure 
(e.g.,~the product ur~structure in Example~\ref{exa-U.ex}(c)) 
if nothing else is mentioned.  
%=========================================================
\setcounter{examples}{1}
\begin{examples}\label{exa-U.ex} 
(a) 
Each compact Riemannian manifold is~a urR manifold and its ur~structure is 
unique. 

\par 
(b) 
Let $\Om$ be a bounded domain in~$\BR^m$ with a smooth boundary such that 
$\Om$~lies locally on one side of it. Then $(\oO,g_m)$ is~a 
urR manifold. 

\par 
More generally, suppose that $\Om$~is an unbounded open subset of~$\BR^m$ 
whose boundary is ur in the sense of F.E.~Browder~\cite{Bro59a} (also see 
\cite[IV.\S4]{LSU68a}). Then $\oOgm$~is a urR manifold. 
In particular, $\Rmgm$ and~$\Hmgm$ are urR manifolds, where 
\hb{\BH^m:=\BR_+\times\BR^{m-1}} is the closed right half-space in~$\BR^m$. 

\par 
(c) 
If $\Migi$, 
\ \hb{i=1,2}, are urR manifolds and at most one of them has a nonempty 
boundary, then 
\hb{(M_1\times M_2,\ g_1+g_2)} is~a urR manifold. 

(d) 
Assume $M$~is a manifold and $N$~a topological space. Let 
\hb{f\colon N\ra M} be a homeomorphism. If $\gK$ is an atlas for~$M$, then 
\hb{f^*\gK:=\{\,f^*\ka\ ;\ \ka\in\gK\,\}} is an atlas for~$N$ which induces 
the smooth `pull-back' structure on~$N$. If $\gK$~is ur, 
then $f^*\gK$ also is~ur. 

\par 
Let $\Mg$ be~a urR manifold. Then 
\hb{f^*\Mg:=(N,f^*g)} is~a urR manifold and the map 
\hb{f\colon(N,f^*g)\ra\Mg} is an isometric diffeomorphism.\qed 
\end{examples} 
It follows from these examples, for instance, that the cylinders 
\hb{\BR\times M_1} or 
\hb{\BR_+\times M_2}, where $M_i$~are compact Riemannian manifolds with  
\hb{\pl M_2=\es}, are 
urR manifolds. More generally, Riemannian manifolds with cylindrical 
ends are urR manifolds (see~\cite{Ama15a} where more examples 
are discussed). 

\par 
Without going into detail, we mention that a Riemannian manifold without 
boundary is~a urR manifold iff it has bounded geometry (see~\cite{Ama12b} 
for one half of this assertion and \cite{DSS16a} for the other half). Thus, 
for example, $(\ci\BH^m,g_m)$~is \emph{not} a urR manifold. 
A~Riemannian manifold with boundary is~a urR manifold iff it has bounded 
geometry in the sense of Th.~Schick~\cite{Schi01a} (also see 
\cite{AGN19a}, \cite{AGN19b}, \cite{AGN19c}, \cite{GSchn13a} for related 
definitions). Detailed proofs of these equivalences can be found 
in~\cite{AmaVolIII21a}. 
%=========================================================
%=========================================================
\section{Uniformly Regular Hypersurfaces}\label{sec-H} 
Let $\Mg$ be an oriented Riemannian manifold with (possibly empty) 
boundary~$\Ga$. If it is not empty, then there exists 
a unique \emph{inner} (unit) \emph{normal} vector field 
\hb{\nu=\nu_\Ga} on~$\Ga$, that is, a~smooth section of~$T_\Ga M$, 
the restriction 
of the tangent bundle $TM$ of~$M$ to~$\Ga$. Furthermore, $\Ga$~is oriented 
by the inner normal in the usual sense. 

\par 
Suppose that $S$~is an oriented hypersurface in~$\ci M$, an 
embedded submanifold of codimension~$1$. Then there is a unique 
positive (unit) normal vector field~$\nu_S$ on~$S$, 
where `positive' means that 
\hb{\bigl[\nu_S(p),\ba_1,\ldots,\ba_{m-1}\bigr]} is a positive basis 
for~$T_pM$ if 
\hb{[\ba_1,\ldots,\ba_{m-1}]} is one for~$T_pS$. 

\par 
Let 
\hb{Z\in\{\Ga,S\}}. Then we write 
$$ 
\ga_p^Z(t):=\exp_p\bigl(t\nu_Z(p)\bigr) 
\qa t\in I_Z\bigl(\ve(p)\bigr).  
$$ 
This means that, given 
\hb{p\in Z}, 
\ $\ga_p^Z$~is the unique geodesic in~$M$ satisfying 
\hb{\ga_p^Z(0)=p} and 
\hb{\dot\ga_p^Z(0)=\nu_Z(p)} and being defined (at least) 
on~$I_Z\bigl(\ve(p)\bigr)$, where 
\hb{I_\Ga\bigl(\ve(p)\bigr)=\bigl[0,\ve(p)\bigr)} and 
\hb{I_S\bigl(\ve(p)\bigr)=\bigl(-\ve(p),\ve(p)\bigr)} for some
\hb{\ve(p)>0}. Note that 
\hb{\ga_p^\Ga(t)\in\ci M} for 
\hb{t>0}. 

\par 
We say that $Z$~has a \emph{uniform} normal geodesic \emph{tubular 
neighborhood} of width~$\ve$ if the following is true: there exist 
\hb{\ve>0} and an open neighborhood~$Z(\ve)$ of~$Z$ in~$M$ 
such that 
\begin{equation}\label{H.pN} 
\vp_Z\sco Z(\ve)\ra I_Z(\ve)\times Z 
\quad\text{with}\quad 
\vp_Z^{-1}(t,p)=\ga_p^Z(t) 
\end{equation}  
is a diffeomorphism satisfying 
\hb{\vp_Z(Z)=\{0\}\times Z}. If 
\hb{Z=\Ga}, then a uniform tubular neighborhood is~a uniform \emph{collar}. 

\par 
Given any embedded submanifold~$C$ of~$M$, with or without boundary, 
we denote by~$g_C$ the pull-back metric~$\ia^*g$, where 
\hb{\ia\sco C\hr M} is the natural embedding. 

\par 
Now we suppose that 
\begin{equation}\label{H.M} 
\Mg\text{ is an $m$-dimensional oriented urR manifold}. 
\npbd 
\end{equation}  
This means that there exists an oriented ur atlas for~$M$.\po  

\par 
Let $S$ be a hypersurface with boundary~$\Sa$ such that 
\hb{\Sa=S\cap\Ga}. Thus 
\hb{S\is\ci M} if 
\hb{\Sa=\es}. An atlas~$\gK$ for~$M$ is \hbox{$S$\emph{-adapted}} 
if for each 
\hb{\ka\in\gK_S} one of the following alternatives applies: 
$$ 
\bal 
{\rm(i)}\quad 
&\ka\notin\gK_\Ga.\text{ Then }Q_\ka^m=(-1,1)^m\text{ and}\cr  
&\ka(S\cap U_{\coU\ka})=\{0\}\times(-1,1)^{m-1};\cr 
{\rm(ii)}\quad 
&\ka\in\gK_\Ga. 
 \text{ Then }Q_\ka^m=[0,1)\times(-1,1)^{m-1},\cr 
&\ka(\Ga\cap U_{\coU\ka})=\{0\}\times(-1,1)^{m-1},\text{ and}\cr 
&\ka(\Sa\cap U_{\coU\ka})=\{0\}^2\times(-1,1)^{m-2}. 
\eal 
$$ 
Then $S$~is~a \emph{regularly embedded hypersurface in}~$M$, 
a~\emph{membrane} for short, if there exists an oriented ur atlas~$\gK$ 
for~$M$ which is \hbox{$S$-adapted}. 

\par 
Let $S$ be a membrane. Each \hbox{$S$-adapted} atlas for~$M$ induces 
(by restriction) a~ur structure and~a (natural) orientation on~$S$. 
Moreover, the ur structure and the orientation of~$S$ are independent 
of the specific choice of~$\gK$. 

\par 
For the proof of all this and the following theorem we refer 
to~\cite{AmaVolIII21a}. 
%========================================================= 
\setcounter{theorem}{0} 
\begin{theorem}\label{thm-H.S} 
Let \eqref{H.M} be satisfied and suppose $S$~is a membrane in~$M$. Assume 
\hb{Z\in\{\Ga,S\}}. Then 
\begin{itemize} 
\item[{\rm(i)}] 
$(Z,g_Z)$~is an 
(\hb{m-1})-dimensional oriented urR manifold. 
\item[{\rm(ii)}] 
If 
\hb{\Sa=\pl S\neq\es}, then $\Sa$~is a membrane in~$\Ga$ without boundary. 
\item[{\rm(iii)}] 
Let 
\hb{\Sa=\es} if 
\hb{Z=S}. Then $Z$~has a uniform tubular neighborhood 
$$ 
\vp_Z\sco Z(\ve)\ra I_Z(\ve)\times Z 
$$  
and 
\hb{\vp_{Z*}g\sim ds^2+g_Z}.
Moreover, $\vp_Z$~is an orientation preserving diffeomorphism.   
\item[{\rm(iv)}] 
Suppose 
\hb{\Sa\neq\es}. Then, given 
\hb{\rho>0}, there exists 
\hb{\ve(\rho)>0} such that 
$$ 
S\cap\bigl\{\,q\in M\ ;\ d_g(q,\Ga)>\rho\,\bigr\} 
\npbd 
$$ 
has a uniform tubular neighborhood of width~$\ve(\rho)$ in~$\ci M$.\po  
\end{itemize} 
\end{theorem}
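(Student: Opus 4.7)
The plan is to work entirely from an oriented, $S$-adapted ur atlas $\gK$ for $M$, whose existence is part of the hypothesis that $S$ is a membrane, and to deduce each of the four assertions in turn.

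For (i), I would restrict $\gK$ to $Z \in \{\Ga, S\}$: for each $\ka \in \gK_Z$, post-compose $\ka|_{U_{\coU\ka} \cap Z}$ with the obvious affine identification of the appropriate face of $Q^m_\ka$ (namely $\{0\}\times(-1,1)^{m-1}$) with $Q^{m-1}_{\ka'}$ to produce a normalized chart~$\ka'$ of~$Z$. Shrinkability and finite multiplicity on~$Z$ pass directly from~$\gK$ to~$\gK_Z$, and each transition map $\wt\ka'\circ(\ka')^{-1}$ is the restriction of $\wt\ka\circ\ka^{-1}$ to a fixed coordinate hyperplane, so \eqref{U.K}(ii) is inherited. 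To check that $g_Z=\ia^*g$ is urR it suffices to observe that $\ka'_*(g_Z)$ is the pullback of $\ka_*g$ along that affine inclusion, which preserves both the $BUC^\infty$-bounds and the equivalence with the Euclidean metric in \eqref{U.g}, the positive definiteness of the restricted quadratic form being clear since $Z$ is transverse to the remaining coordinate. Orientability of~$Z$ is read off from the orientation of~$\gK$.

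For (ii), the charts $\ka\in\gK_\Ga\cap\gK_S$ are exactly those that hit~$\Sa$, and by the $S$-adaptedness of~$\gK$ they satisfy $\ka(\Ga\cap U_{\coU\ka})=\{0\}\times(-1,1)^{m-1}$ and $\ka(\Sa\cap U_{\coU\ka})=\{0\}^2\times(-1,1)^{m-2}$. Restricting to $\Ga$ as in (i), the induced chart $\ka'$ on $\Ga$ satisfies $\ka'(\Sa\cap U_{\coU\ka'})=\{0\}\times(-1,1)^{m-2}$, which is alternative~(i) of $\Sa$-adaptedness in the boundaryless sense. Hence $\gK|_\Ga$ is $\Sa$-adapted, so $\Sa$ is a membrane in the urR manifold $(\Ga,g_\Ga)$ without boundary.

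For (iii), I would define $\vp_Z^{-1}(t,p):=\exp_p\bigl(t\nu_Z(p)\bigr)$ and argue that it is a diffeomorphism from $I_Z(\ve)\times Z$ to an open neighborhood of~$Z$ for some uniform $\ve>0$. Locally in any $\ka\in\gK_Z$ the normal exponential map is given by an explicit formula involving the Christoffel symbols of $\ka_*g$; by \eqref{U.g} these symbols lie in $BUC^\infty$ uniformly in~$\ka$, and the same holds for $\nu_Z$ written in $\ka'$-coordinates. The inverse function theorem, applied uniformly in~$\ka$, therefore yields a~$\ve_\ka\ge\ve_0>0$ on which $\vp_Z$ is a local diffeomorphism, and finite multiplicity together with shrinkability of $\gK_Z$ lets one shrink~$\ve$ further to get global injectivity. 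The metric equivalence $\vp_{Z*}g\sim ds^2+g_Z$ follows from the Gauss lemma in each chart combined with \eqref{U.g}, and the orientation-preserving property is immediate from the positivity of $\nu_Z$.

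For (iv), one notices that $\wt S:=S\cap\{q\in M\sco d_g(q,\Ga)>\rho\}$ lies in $\ci M$ and has empty boundary there. Using Remark~\ref{rem-U.R}(a), the ur structure on~$M$ restricts to a ur structure on the open set $\{q\sco d_g(q,\Ga)>\rho/2\}$, and the restricted atlas is still $\wt S$-adapted in the boundaryless sense; so $\wt S$ is a membrane without boundary in this open urR manifold, and part~(iii) supplies a uniform tubular neighborhood of some width $\ve(\rho)>0$. The main obstacle is (iii): one has to turn the uniform bounds of \eqref{U.g} into a uniform positive lower bound on the width of the normal exponential diffeomorphism, which requires quantitative control of both the focal distance of~$Z$ and the injectivity radius of~$M$ near~$Z$, consistent across the (infinitely many) overlapping charts of~$\gK_Z$.
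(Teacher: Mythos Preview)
The paper does not actually prove this theorem in the text; the sentence immediately preceding the statement reads ``For the proof of all this and the following theorem we refer to~\cite{AmaVolIII21a}'', so the argument is deferred entirely to a monograph in preparation. There is therefore no in-paper proof to compare your proposal against.

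On its own merits your outline is the natural one and is broadly correct for (i) and (ii): restricting an $S$-adapted ur atlas to $Z$, and then to~$\Sa$ inside~$\Ga$, inherits normalization, shrinkability, finite multiplicity, the $BUC^\infty$ bounds on transition maps, and the metric estimates~\eqref{U.g}. For (iii) you correctly isolate the mechanism (uniform inverse function theorem in charts plus Gauss lemma) but, as you yourself flag, the passage from \emph{local} diffeomorphism to \emph{global} injectivity is not a formality: finite multiplicity of~$\gK_Z$ only controls overlaps of nearby charts, whereas one must also exclude that normal geodesics from widely separated points of~$Z$ meet. This is precisely the uniform normal injectivity radius bound that characterizes bounded geometry with boundary (cf.~\cite{Schi01a}), and it requires an explicit argument converting the ur hypotheses into such a bound; your sketch stops short of this. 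For (iv) there is a small gap: the open set $\{d_g(\cdot,\Ga)>\rho/2\}$ is not automatically a urR manifold in its own right (openness does not yield a shrinkable atlas), and $\wt S$ need not be closed in it. A cleaner fix is to stay in~$M$ and observe that the charts of~$\gK_S$ meeting $\{d_g(\cdot,\Ga)>\rho\}$ are all of interior type (alternative~(i) of $S$-adaptedness), so the argument of part~(iii) applies directly to this subfamily without changing the ambient manifold.
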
 
Now we suppose that $S$~is a membrane with 
\hb{\Sa\neq\es}. It follows from (ii) and~(iii) that 
$\Sa$~has a uniform tubular neighborhood 
\hb{\psi\sco\Sa^\Ga(\ve)\ra(-\ve,\ve)\times\Sa} \emph{in}~$\Ga$ for some 
\hb{\ve>0}. By part~(iii), $\Ga$~has a uniform collar 
\hb{\vp\sco\Ga(\ve)\ra[0,\ve)\times\Ga} in~$M$, where we assume without 
loss of generality that $\vp$ and~$\psi$ are of the same width. Then 
$$ 
\Sa(\ve):=\bigl\{\,\ga_q^\Ga(t)\ ;\ q\in\Sa^\Ga(\ve),\ 0\leq t<\ve\,\bigr\} 
$$ 
is an open neighborhood of~$\Sa$ in~$M$ and 
$$ 
\chi\sco\Sa(\ve)\ra[0,\ve)\times(-\ve,\ve)\times\Sa 
\quad\text{with}\quad  
\chi^{-1}(x,y,\sa):=\vp^{-1}\bigl(x,\psi^{-1}(y,\sa)\bigr) 
$$ 
is an orientation preserving diffeomorphism, a~\emph{tubular 
neighborhood of~$\Sa$ in~$M$} of width~$\ve$. 

\par 
We refer once more to~\cite{AmaVolIII21a} for the proof of the next 
theorem. Henceforth, 
\hb{h:=g_\Sa} and 
\hb{N(\ve):=[0,\ve)\times(-\ve,\ve)}. 
%=========================================================
\begin{theorem}\label{thm-H.Si} 
Assume \eqref{H.M} and $S$~is a membrane with nonempty 
boundary~$\Sa$. Then 
\begin{equation}\label{H.g} 
\chi_*g\sim dx^2+dy^2+h. 
\end{equation}  
\end{theorem}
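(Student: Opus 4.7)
The plan is to factor $\chi$ through the two tubular-neighborhood charts $\vp$ and $\psi$, and to apply Theorem~\ref{thm-H.S}(iii) twice: once to $\Ga$ as a hypersurface in $M$, and once to $\Sa$ as a boundaryless hypersurface in $\Ga$. Inverting the defining formula $\chi^{-1}(x,y,\sa)=\vp^{-1}\bigl(x,\psi^{-1}(y,\sa)\bigr)$ shows that on $\Sa(\ve)$ one has $\chi=(\id_{[0,\ve)}\times\psi)\circ\vp$. By functoriality of the push-forward this gives
\[
\chi_*g \;=\; (\id\times\psi)_*(\vp_*g).
\]

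First, Theorem~\ref{thm-H.S}(iii), applied to the uniform collar $\vp$ of $\Ga$ in $M$, yields $\vp_*g\sim dx^2+g_\Ga$ on $[0,\ve)\times\Ga$. Next, by parts (i) and~(ii) of Theorem~\ref{thm-H.S}, the pair $(\Ga,g_\Ga)$ is an oriented urR manifold in which $\Sa$ is a boundaryless membrane, so Theorem~\ref{thm-H.S}(iii) applies inside $\Ga$ as well and yields $\psi_*g_\Ga\sim dy^2+h$ on $(-\ve,\ve)\times\Sa$.

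It remains to chain these two equivalences. Since $\id\times\psi$ is a diffeomorphism and the push-forward is a pointwise $\BR$-linear operation on symmetric $2$-tensors which preserves positivity, the quadratic-form relation $\vp_*g\sim dx^2+g_\Ga$ passes through unchanged to give
\[
\chi_*g \;=\; (\id\times\psi)_*(\vp_*g)\;\sim\;(\id\times\psi)_*(dx^2+g_\Ga)\;=\;dx^2+\psi_*g_\Ga\;\sim\;dx^2+dy^2+h
\]
on $N(\ve)\times\Sa$, the resulting equivalence constant being the product of the two constants from the two applications of Theorem~\ref{thm-H.S}(iii). The main, and really only, point requiring attention is that these constants are uniform over the possibly noncompact $\Sa$; this is automatic, since both applications of Theorem~\ref{thm-H.S}(iii) supply uniform constants by the very definition of a urR tubular neighborhood, and the composition of two uniform equivalences is again uniform.
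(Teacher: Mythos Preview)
Your argument is correct. The factorisation $\chi=(\id_{[0,\ve)}\times\psi)\circ\vp$ follows directly from the definition of~$\chi$, and the two applications of Theorem~\ref{thm-H.S}(iii)---once to the collar~$\vp$ of~$\Ga$ in~$M$, once to the tubular neighborhood~$\psi$ of~$\Sa$ in~$\Ga$---yield exactly the two uniform equivalences you need. The passage $(\id\times\psi)_*(dx^2+g_\Ga)=dx^2+\psi_*g_\Ga$ is a straightforward product-structure computation, and pushing the quadratic-form inequality $\vp_*g\sim dx^2+g_\Ga$ through the diffeomorphism $\id\times\psi$ is indeed legitimate since push-forward by a diffeomorphism preserves the pointwise partial order on symmetric $2$-tensors.

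Note that the paper itself does not supply a proof of this theorem: it refers the reader to~\cite{AmaVolIII21a}. Your approach is precisely the natural one---composing the two tubular-neighborhood splittings---and is almost certainly what the cited reference does as well. There is nothing further to compare.
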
 
It follows that 
$$ 
\Sa_\sa(\ve):=\chi^{-1}\bigl(N(\ve)\times\{\sa\}\bigr) 
$$ 
is for each 
\hb{\sa\in\Sa} a~\hbox{$2$-dimensional} submanifold of~$\Sa(\ve)$ and 
\hb{S\cap\Sa_\sa(\ve)} is~a \hbox{$1$-dimensional} submanifold 
of~$\Sa_\sa(\ve)$. 
%========================================================= 
\setcounter{remark}{2}
\begin{remark}\label{rem-H.2} 
\uti{The two-dimensional case} 
Suppose 
\hb{\dim(M)=2} and 
\hb{\Sa\neq\es}. It follows from Theorem~\ref{thm-H.S}(ii) and the fact that 
$M$~has a countable base that $\Sa$~is a countable discrete subspace of~$M$. 
Thus we can find 
\hb{\ve>0} with the following properties: if we denote by 
\hb{\psi^{-1}(\cdot,\sa)\sco(-\ve,\ve)\ra\Ga} the local arc-length 
parametrization of~$\Ga$ with 
\hb{\psi^{-1}(0,\sa)=\sa} for 
\hb{\sa\in\Sa}, then the above definition of~$\chi$ applies and defines 
a~tubular neighborhood of~$\Sa$ in~$M$ of width~$\ve$. 

\par 
Note that $\Sa(\ve)$~is the countable pair-wise disjoint union 
of~$\Sa_\sa(\ve)$, 
\ \hb{\sa\in\Sa}. The term~$+h$ in \eqref{H.g} (and everywhere else) has to 
be disregarded and the volume measure of~$\Sa$ is the counting measure. 
Thus in this case integration with respect to~$d\vol_\Sa$ reduces to 
summation over 
\hb{\sa\in\Sa}.\qed 
\end{remark} 
Now we restrict the class of membranes under consideration 
by requiring that $S$~\emph{intersects~$\Ga$ uniformly transversally}. 
This means the following: there exists 
\hb{f\in C^\iy\bigl([0,\ve)\times\Sa,\,(-\ve,\ve)\bigr)} such that, setting 
\hb{f_\sa:=f(\cdot,\sa)},
\begin{equation}\label{H.f} 
\bal 
{\rm(i)}\quad 
&f_\sa(0)=0,\ \sa\in\Sa;\cr 
{\rm(ii)}\quad 
&\text{Given $\ol{\ve}\in(0,\ve)$, 
there exists $\ol{\rho}\in(0,\ve)$ with}\cr 
&\qquad\quad 
 |f_\sa(x)|\leq\ol{\rho},\ 0\leq x\leq\ol{\ve},\ \sa\in\Sa;\cr 
 {\rm(iii)}\quad 
&|\pl f_\sa(0)|\leq c,\ \sa\in\Sa;\cr 
 {\rm(iv)}\quad 
&\chi\bigl(S\cap\Sa_\sa(\ve)\bigr)=\graph(f_\sa)\times\{\sa\},\ \sa\in\Sa. 
\eal 
\end{equation}  
In general, a~submanifold~$C$ of a manifold~$B$ intersects~$\pl B$ 
transversally if 
$$ 
T_pC+T_p\pl B=T_pB 
\qa p\in\pl B. 
$$ 

\par 
The following theorem furnishes an important large class of urR 
manifolds and membranes intersecting the boundary uniformly transversally. 
%=========================================================
\setcounter{theorem}{3}
\begin{theorem}\label{thm-H.C} Let $\Mg$ be a compact oriented Riemannian 
manifold with boundary~$\Ga$. 
Assume $S$~is an oriented hypersurface in~$M$ with nonempty boundary 
\hb{\Sa\is\Ga} and $S$~intersects~$\Ga$ transversally. Then $\Mg$~is a urR 
manifold and $S$~is a membrane intersecting~$\Ga$ uniformly transversally. 
\end{theorem}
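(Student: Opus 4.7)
The first assertion, that $\Mg$ is a urR manifold, is immediate from Example~\ref{exa-U.ex}(a): compactness collapses the uniformity requirements \eqref{U.K} and \eqref{U.g} to estimates over a finite atlas of normalized interior and boundary charts.

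To verify that $S$ is a membrane, the plan is to build an oriented $S$-adapted atlas by assembling charts of four kinds and passing to a finite subcover. Near each point of $M\ssm(\Ga\cup S)$, take a normalized interior chart avoiding both $\Ga$ and $S$. Near each point of $S\ssm\Sa$, take a normalized interior chart straightening $S$ to $\{0\}\times(-1,1)^{m-1}$; this is possible since $S\ssm\Sa\is\ci M$ is embedded, and realizes alternative~(i) of the $S$-adapted definition. Near each point of $\Ga\ssm\Sa$, take a normalized boundary chart shrunk so as to miss $S$, which is legitimate because $S\cap\Ga=\Sa$ and $S$ is closed in $M$. Finally, near each point of $\Sa$, use that $\Sa$ is a codimension-one submanifold of $\Ga$ to pick a normalized boundary chart simultaneously straightening $\Ga$ and $\Sa$, realizing alternative~(ii); the chart need not straighten $S$ itself. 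A finite subcover extracted by compactness automatically satisfies \eqref{U.K}, \eqref{U.e}, and \eqref{U.g}, and the charts are chosen compatibly with the given orientations on $M$ and $S$.

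For the uniform transversality \eqref{H.f}, the plan is to push the pointwise transversality hypothesis through the tubular neighborhood $\chi\sco\Sa(\ve)\ra[0,\ve)\times(-\ve,\ve)\times\Sa$ of $\Sa$ in $M$ constructed just before Theorem~\ref{thm-H.Si}. For each $\sa\in\Sa$ the slice $\Sa_\sa(\ve)=\chi^{-1}\bigl(N(\ve)\times\{\sa\}\bigr)$ is a two-dimensional submanifold containing $\sa$, and $\chi$ sends $S\cap\Sa_\sa(\ve)$ to a plane curve through the origin in $N(\ve)$. Since $T_\sa\Sa\is T_\sa S\cap T_\sa\Ga$ has codimension one in each of $T_\sa S$ and $T_\sa\Ga$ and $T_\sa S+T_\sa\Ga=T_\sa M$, the extra direction of $T_\sa S$ beyond $T_\sa\Sa$ has nonzero component along the inner normal to $\Ga$. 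Hence the tangent of the plane curve at the origin has nonzero $x$-component, and the parametric implicit function theorem expresses the curve as the graph of a smooth function $y=f_\sa(x)$ with $f_\sa(0)=0$, producing a smooth $f\in C^\iy\bigl([0,\ve)\times\Sa,(-\ve,\ve)\bigr)$ and yielding \eqref{H.f}(i) and~(iv).

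The main obstacle I expect is producing a single width $\ve$ together with uniform bounds \eqref{H.f}(ii) and~(iii) valid across all $\sa\in\Sa$, since a priori the domain of definition and derivative bounds of $f_\sa$ depend on $\sa$ and on the conditioning of the implicit function theorem at $\sa$. Compactness of $\Sa$ resolves this by furnishing a finite cover on which the parametric implicit function theorem applies with common constants; the minimum of the corresponding widths provides the uniform~$\ve$, and the pointwise bounds $|\pl f_\sa(0)|\leq c$ and $|f_\sa(x)|\leq\ol{\rho}$ then follow by continuity and compactness. This is the decisive use of the compactness hypothesis beyond the atlas construction.
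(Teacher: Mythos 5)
Your proof is correct but reorganizes the logic relative to the paper's. The paper constructs the tubular neighborhood $\chi$ of $\Sa$ in $M$ from compactness alone, \emph{before} verifying the membrane property: transversality and the implicit function theorem show $\Sa$ is a compact oriented hypersurface without boundary in $\Ga$, hence $(\Sa,g_\Sa)$ is urR by Example~\ref{exa-U.ex}(a), compactness gives a uniform tubular neighborhood of $\Sa$ in $\Ga$, and concatenation with the uniform collar of $\Ga$ yields $\chi$. It then derives \eqref{H.f} and only in the last sentence remarks that compactness of $S$ makes the $S$-adapted atlas easy to produce. You invert the order: you build the $S$-adapted atlas first (so that $S$ is a membrane and the machinery of Theorem~\ref{thm-H.S} together with the paragraph preceding Theorem~\ref{thm-H.Si} delivers $\chi$), then run essentially the same implicit-function-theorem-plus-compactness argument for \eqref{H.f}. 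Both orderings are valid and free of circularity. Your four-type chart construction usefully spells out the paper's ``it is not difficult to see''---the decisive observation being, as you note, that boundary charts meeting $\Sa$ need only straighten $\Ga$ and $\Sa$, not $S$ itself, which is exactly what alternative~(ii) of the $S$-adapted definition requires---while the paper's direct construction of $\chi$ shows that the tubular neighborhood is available from compactness independently of the membrane verification.
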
 
%---------------------------------------------------------
\begin{proof} 
Example~\ref{exa-U.ex}(a) guarantees that $\Mg$~is an oriented urR 
manifold. Thus $(\Ga,g_\Ga)$ is an oriented urR manifold by 
Theorem~\ref{thm-H.S}(i). Since $S$~intersects~$\Ga$ transversally, it is a 
well-known consequence of the implicit function theorem that $\Sa$~is a 
compact hypersurface in~$\Ga$ without boundary. It is oriented, being the 
boundary of the oriented manifold~$S$. Hence, invoking 
Example~\ref{exa-U.ex}(a) once more, $(\Sa,g_\Sa)$ is an oriented urR 
manifold. As it is compact, it has a uniform tubular neighborhood 
in~$\Ga$. Thus, $\Ga$~having a uniform collar, $\Sa$~has a uniform 
tubular neighborhood~$\chi$ in~$M$ of some width~$\ve$. 

\par 
Since $S$~intersects~$\Ga$ transversally, 
\hb{\chi\bigl(S\cap\Sa_\sa(\ve)\bigr)} can be represented as the graph 
of a smooth function 
\hb{f_\sa\sco[0,\ve)\ra(-\ve,\ve)} with 
\hb{f_\sa(0)=0}, and $f_\sa$~depends smoothly on 
\hb{\sa\in\Sa}. The compactness of~$\Sa$ implies that \eqref{H.f} is true. 
Hence $S$~intersects~$\Ga$ uniformly transversally. 
Now, due to the compactness of~$S$, it is not difficult to see that 
$S$~is a regularly embedded submanifold of~$M$. The theorem is proved.  
\end{proof} 
%=========================================================
\setcounter{remarks}{4} 
\begin{remarks}\label{rem-H.C}  
(a) 
This theorem applies to the case 
\hb{\Mg=\oOgm} considered in Section~\ref{sec-R}. 

\par
(b) 
Suppose $\Mg$~is an oriented urR manifold and $S$~a membrane without 
boundary. Then the fact that $S$~has a uniform tubular neighborhood 
in~$\ci M$ prevents~$S$ from either 
reaching~$\Ga$ or `collapsing' at infinity.\qed  
\end{remarks} 
%=========================================================
%=========================================================
\section{The Singular Manifold}\label{sec-S} 
In this section 
\begin{equation}\label{S.ass} 
\bal 
{}% do not remove!
&\Mg\text{ is an oriented urR manifold and}\cr 
\noalign{\vskip-1\jot}
&S\text{ a membrane with nonempty boundary }\Sa\cr 
\noalign{\vskip-1\jot}&\text{such that $S$ intersects $\Ga$ uniformly 
transversally}.
\eal 
\end{equation}  
By Theorem~\ref{thm-H.S} and the considerations following it we can choose 
a uniform tubular neighborhood 
\begin{equation}\label{S.t} 
\chi\sco\Sa(\ve)\ra N(\ve)\times\Sa. 
\end{equation}  
We write  
\hb{D(\ve):=\bigl\{\,(x,y)\in\BR^2\ ;\ x^2+y^2<\ve^2,\ x\geq0\,\bigr\}}. 
Then 
\begin{equation}\label{S.W} 
\wt{U}(\ve):=\chi^{-1}\bigl(D(\ve)\times\Sa\bigr) 
\end{equation}  
is an open neighborhood of~$\Sa$ in~$M$ contained in~$\Sa(\ve)$. We put 
$$ 
\wh{M}:=M\ssm\Sa 
\qb U(\ve):=\wt{U}(\ve)\cap\wh{M}=\wt{U}(\ve)\ssm\Sa. 
$$ 
Furthermore, $r$ and~$\rho$ are given by \eqref{R.r} and \eqref{R.rh}, 
respectively. Then we define a Riemannian metric~$\wh{g}$ on~$\wh{M}$ by 
\begin{equation}\label{S.g} 
\wh{g}:=  
\left\{ 
\bal 
{}%  do not remove!!!!
&g
 &&\ \text{on }M\big\backslash\wt{U}(\ve),\cr 
&\chi^*\Bigl(\frac{dx^2+dy^2}{\rho^2(x,y)}+h\Bigr)
 &&\ \text{on }U(\ve). 
\eal 
\right. 
\end{equation}  
Note that, see Theorem~\ref{thm-H.Si}, 
\begin{equation}\label{S.gg} 
\wh{g}\sim g\text{ on }M\big\backslash\wt{U}(\ve/3) 
\end{equation}  
and 
$$ 
\wh{g}=\chi^*\Bigl(\frac{dx^2+dy^2}{x^2+y^2}+h\Bigr) 
\text{ on }U(\ve/3). 
\npbd 
$$ 
Hence $\whMwhg$~is a Riemannian manifold with~a \emph{wedge singularity 
near}~$\Sa$.\po  

\par 
The following theorem is the basis for our approach. It implies that it 
suffices to study transmission problems for membranes without boundary on 
urR manifolds. 
%=========================================================
\begin{theorem}\label{thm-S.M} 
$\whMwhg$ is an oriented urR manifold and 
\hb{\wh{S}:=S\ssm\Sa} is a membrane in~$\wh{M}$ without boundary. 
\end{theorem}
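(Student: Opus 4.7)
The plan is to produce an oriented ur atlas for $(\wh M,\wh g)$ and to exhibit $\wh S$ as a regularly embedded hypersurface therein. Since $\wh g=g$ on $M\setminus\wt U(\ve)$ and $\wh g\sim g$ on $M\setminus\wt U(\ve/3)$ by \eqref{S.gg}, the restriction of the given oriented ur atlas of $\Mg$ to patches contained in $M\setminus\wt U(\ve/6)$ already provides a ur substructure of the required kind on that open subset; Remark~\ref{rem-U.R}(b) allows us to take the patches of arbitrarily small diameter while preserving shrinkability and finite multiplicity. The nontrivial task is to handle the punctured wedge neighborhood $U(\ve/3)$, where $\wh g$ degenerates at $\Sa$.

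On $U(\ve/3)$ the cutoff $\om$ equals $1$, hence $\rho=r$ and \eqref{S.g} reads $\wh g=\chi^*\bigl((dx^2+dy^2)/(x^2+y^2)+h\bigr)$. I would introduce log-polar coordinates: for $(x,y)\in D(\ve/3)\setminus\{0\}$ write $(x,y)=(r\cos\theta,r\sin\theta)$ with $\theta\in[-\pi/2,\pi/2]$, and put $s:=-\log r\in\bigl(\log(3/\ve),\iy\bigr)$. Then $dx^2+dy^2=dr^2+r^2\,d\theta^2$, so
$$
\frac{dx^2+dy^2}{x^2+y^2}=\frac{dr^2}{r^2}+d\theta^2=ds^2+d\theta^2.
$$
Setting $W:=\bigl(\log(3/\ve),\iy\bigr)\times[-\pi/2,\pi/2]$, the map $\Phi\sco U(\ve/3)\ra W\times\Sa$ built from $\chi$ and $(x,y)\mt(s,\theta)$ is an orientation preserving \emph{isometry} from $(U(\ve/3),\wh g)$ onto $(W\times\Sa,\,ds^2+d\theta^2+h)$. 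The target is urR by Example~\ref{exa-U.ex}(b,c), applied to the Euclidean domain $W$ (whose boundary is uniformly regular) and the urR manifold $(\Sa,h)$ from Theorem~\ref{thm-H.S}(i); Example~\ref{exa-U.ex}(d) transfers this structure back to $U(\ve/3)$.

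To glue the two constituents into a global oriented ur atlas for $\wh M$, it remains to verify that on the overlap $\wt U(\ve/3)\setminus\wt U(\ve/6)$ the transition maps between ``old'' and ``new'' charts are uniformly smooth with uniformly bounded derivatives of every order. On this annulus $r$ lies in the compact interval $[\ve/6,\ve/3]$, so the log-polar transformation is a diffeomorphism onto a bounded region with uniform $C^\iy$ bounds on itself and its inverse; combined with \eqref{U.K}(ii) for the atlas of $\Mg$ and the chain rule, this delivers the estimates \eqref{U.K}(ii) and the metric equivalence \eqref{U.g}(i) for the merged atlas. Finite multiplicity and shrinkability follow from those of the two constituents together with a uniform small-diameter choice (Remark~\ref{rem-U.R}(b)).

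For the second assertion, $\wh S$ has no boundary because $\pl S=\Sa$ has been removed. Away from $\Sa$, any $S$-adapted chart of the original atlas restricts to a $\wh S$-adapted chart in $\wh M$. Near $\Sa$, condition \eqref{H.f} gives $f_\sa(x)=xf_\sa'(0)+O(x^2)$ uniformly in $\sa$, so in the log-polar coordinates $(s,\theta,\sa)$ the image $\Phi\bigl(\wh S\cap U(\ve/3)\bigr)$ is the graph
$$
\theta=\wh f(s,\sa):=\arctan\bigl(f_\sa(e^{-s})/e^{-s}\bigr),
$$
where $\wh f$ is smooth with uniformly bounded derivatives of every order and with image strictly inside $(-\pi/2,\pi/2)$ by uniform transversality. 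A further coordinate change $\theta\mt\theta-\wh f(s,\sa)$, which preserves all uniform bounds, yields charts in which $\wh S$ appears as $\{\theta=0\}$, i.e., $\wh S$-adapted charts. The principal obstacle is the uniformity of the higher-derivative bounds as $s\to\iy$ (equivalently $r\to0$): it is here that the full smoothness of $f$ on $[0,\ve)\times\Sa$ together with the Taylor expansion at $x=0$---rather than just the first-order bound \eqref{H.f}(iii)---is needed to show that $f_\sa(e^{-s})/e^{-s}$ extends smoothly to $s=\iy$ with all derivatives bounded uniformly in $\sa\in\Sa$.
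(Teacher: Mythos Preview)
Your argument for the first assertion is the paper's, specialized to $U(\ve/3)$ where $\rho=r$, so that log-polar coordinates give an exact isometry onto a half-strip times~$\Sa$; the paper instead works on the larger $U(\wh\ve)$, $\wh\ve\in(2\ve/3,\ve)$, with the general~$\rho$, obtaining $\lda^*\bigl((dx^2+dy^2)/\rho^2\bigr)=ds^2+\ba(s)^2\,d\al^2$ with $\ba\sim1$ rather than an exact isometry. Both versions then glue with the original atlas away from~$\Sa$.

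For the second assertion there is a concrete error. Under your map~$\Phi$ a point $\bigl(x,f_\sa(x)\bigr)$ on~$S$ goes to $(s,\theta)$ with $s=-\log\sqrt{x^2+f_\sa(x)^2}$ and $\theta=\arctan\bigl(f_\sa(x)/x\bigr)$; hence $e^{-s}=r=\sqrt{x^2+f_\sa(x)^2}\neq x$ unless $f_\sa\equiv0$, and your formula $\wh f(s,\sa)=\arctan\bigl(f_\sa(e^{-s})/e^{-s}\bigr)$ does \emph{not} describe $\Phi(\wh S)$. The correct graph function requires first inverting $x\mapsto s(x)$ and then substituting, which destroys the clean expression you wrote.

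More importantly, the paper takes a different route for the membrane property. It does not rectify to produce $\wh S$-adapted charts; instead it shows that the transformed membrane has a \emph{uniform tubular neighborhood} in the half-strip, using only that $\wt f_\sa:=t^*f_\sa$ stays bounded away from $\pm\pi/2$ (from \eqref{H.f}(ii)) and that $\pl\wt f_\sa(s)=-c\,e^{-s}\pl f_\sa(c\,e^{-s})\to0$ $\sa$-uniformly (the exponential factor extracts the decay from \eqref{H.f}(iii) alone). Combined with Theorem~\ref{thm-H.S}(iv) away from~$\Sa$, this yields the membrane property. This sidesteps exactly the obstacle you flag at the end: your rectification $\theta\mt\theta-\wh f(s,\sa)$ is a ur coordinate change only if \emph{all} $s$- and $\sa$-derivatives of~$\wh f$ are uniformly bounded, yet \eqref{H.f} provides a uniform bound only on $\pl f_\sa(0)$, not on higher derivatives of~$f$ uniformly in~$\sa$. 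The tubular-neighborhood approach demands far less control and is what makes the argument go through under the stated hypotheses.
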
 
%---------------------------------------------------------
\begin{proof} 
(1) 
We set 
\hb{\dot D(\ve):=D(\ve)\ssm\{0,0\}} and define 
\hb{\da\in C^\iy[0,\ve)} by 
$$ 
\rho(x,y)=\da\bigl(r(x,y)\bigr) 
\qa (x,y)\in D(\ve). 
$$ 
Then we fix 
\hb{\wh{\ve}\in(2\ve/3,\,\ve)}, define a diffeomorphism 
$$ 
s\sco(0,\wh{\ve}\,]\ra\BR_+ 
\qb r\mt\int_r^{\wh{\ve}}\frac{dt}{\da(t)}, 
$$ 
and set 
\hb{t:=s^{-1}}. It follows, see \cite[Lemma~5.1]{Ama20a}, that 
$$ 
t^*\Bigl(\frac{dr^2}{\da^2(r)}\Bigr)=ds^2. 
$$ 
We also consider the polar coordinate diffeomorphism 
$$ 
R\sco(0,\wh{\ve}\,)\times[-\pi/2,\,\pi/2]\ra\dot D(\wh{\ve}) 
\qb (r,\al)\mt r(\cos\al,\sin\al)\ . 
$$ 
Then 
$$ 
R^*(dx^2+dy^2)=dr^2+r^2d\al^2 
=\da^2\Bigl(\frac{dr^2}{\da^2}+\frac{r^2}{\da^2}\,d\al^2\Bigr), 
$$ 
that is, 
\begin{equation}\label{S.Rg} 
R^*\Bigl(\frac{dx^2+dy^2}{\rho^2}\Bigr) 
=\frac{dr^2}{\da^2}+\frac{r^2}{\da^2}\,d\al^2. 
\end{equation}  
Hence 
$$ 
\lda:=R\circ(t\times\id) 
\sco(0,\iy)\times[-\pi/2,\,\pi/2]\ra\dot D(\wh{\ve}) 
$$ 
is a diffeomorphism satisfying 
$$ 
\lda^*\Bigl(\frac{dx^2+dy^2}{\rho^2}\Bigr) 
=(t\times\id)^*R^*\Bigl(\frac{dx^2+dy^2}{\rho^2}\Bigr) 
=ds^2+\ba^2(s)d\al^2=:\ga^2, 
$$ 
where 
\hb{\ba:=t^*(r/\da)}. By~\eqref{R.rh}, 
\hb{r/\da=r/(1-\om+r\om)} for 
\hb{0<r\leq\wh{\ve}}. 
Hence $\ba$~is smooth and 
\hb{\ba\sim1}. Thus $\ga$~is a metric on 
\hb{N:=(0,\iy)\times[-\pi/2,\,\pi/2]} which is uniformly equivalent to 
\hb{g_2=ds^2+d\al^2}. By Examples~\ref{exa-U.ex} \hbox{(a)--(c)}, 
$$ 
\bigl(\BR\times[-\pi/2,\,\pi/2],\,ds^2+d\al^2\bigr) 
$$ 
is a urR manifold. From this we deduce, see Remark~\ref{rem-U.R}(a), that 
$(N,\ga)$~is a urR manifold on~$(\ol{s},\iy)$ for each 
\hb{\ol{s}>0}.  

\par 
It follows that 
$$ 
w:=(\lda^{-1}\times\id)\circ\chi\sco\bigl(U(\wh{\ve}),\wh{g}\bigr) 
\ra(N\times\Sa,\,\ga+h) 
$$ 
is an isometric isomorphism. Hence we derive from Example~\ref{exa-U.ex}(d) 
and Remark~\ref{rem-U.R}(a) that $U(\wh{\ve})$~is a urR manifold 
on~$U(\ol{\ve})$, where 
\hb{\ol{\ve}:=5\ve/6}. Since $\Mg$~is a urR manifold, it is a urR manifold 
on 
\hb{M\big\backslash\wt{U}(\ve/3)}. Thus it is a consequence of \eqref{S.gg} 
that $\whMwhg$~is a urR manifold. The first assertion is now clear. 

\par 
(2) 
Fix 
\hb{\ol{\ve}\in(\ve/3,\,\wh{\ve})}. It can be 
assumed that \eqref{H.f}~applies with this choice of~$\ol{\ve}$. Set 
\hb{\wt{f}_\sa:=t^*f_\sa}. Note that \eqref{H.f}(ii) implies 
\begin{equation}\label{S.fpb} 
\wt{f}_\sa\sco\bigl[s(\ol{\ve}),\iy\bigr)\ra[-\ol{\rho},\ol{\rho}] 
\qa \sa\in\Sa. 
\end{equation}  
Also note that 
\hb{t(s)=c\coc e^{-s}} for 
\hb{s\geq s(\ve/3)} and some 
\hb{c>0}. Hence 
$$ 
\pl\wt{f}_\sa(s)=-c\coc e^{-s}\pl f_\sa(c\coc e^{-s}) 
\qa s\geq s(\ve/3). 
$$ 
Thus it follows from \eqref{H.f}(iii) that 
\begin{equation}\label{S.df} 
\lim_{s\ra\iy}\pl\wt{f}_\sa(s)=0 
\qquad\text{$\sa$-unif.}, 
\npbd 
\end{equation}  
that is, uniformly with respect to 
\hb{\sa\in\Sa}.\po  

\par 
We write~$\wt{G}_\sa$ for the graph of~$\wt{f}_\sa$ in 
\hb{\bigl[s(\ol{\ve}),\iy\bigr)\times[-\pi/2,\,\pi/2]}. We can assume that 
\begin{equation}\label{S.nu} 
\wt{\nu}_\sa(s) 
:=\frac{(\pl\wt{f}_\sa(s),-1)}{(1+|\pl\wt{f}_\sa(s)|^2)^{1/2}} 
\qa s\geq s(\ol{\ve}), 
\end{equation}  
is the positive normal for~$\wt{G}_\sa$ at~$\bigl(s,\wt{f}_\sa(s)\bigr)$ 
(otherwise replace~$\wt{\nu}_\sa(s)$ by~$-\wt{\nu}_\sa(s)$). 
It follows from~\eqref{S.df} that 
\begin{equation}\label{S.n} 
\lim_{s\ra\iy}\wt{\nu}_\sa(s)=(0,-1) 
\qquad\text{$\sa$-unif.} 
\end{equation}  
From this and \eqref{S.fpb} we deduce that $\wt{G}_\sa$~has a uniform tubular 
neighborhood in 
\hb{\bigl([s(\ol{\ve}),\iy)\times[-\pi/2,\,\pi/2],\,ds^2+d\al^2\bigr)} 
whose width is independent of 
\hb{\sa\in\Sa}. It follows from step~(1) that its pull-back by~$w$ is a 
uniform tubular neighborhood of~$\wh{S}$ 
in~$U(\ol{\ve})$. Now the second part of the assertion 
is a consequence of Theorem~\ref{thm-H.S}(iv), since, given any 
\hb{\da>0},
\,$\wh{g}$~and~$g$ are equivalent on 
\hb{M\big\backslash\wt{U}(\da)}. 
\end{proof}
%=========================================================
%=========================================================
\section{Function Spaces}\label{sec-F} 
Let $\Mg$ be a Riemannian manifold. We consider the tensor bundles 
$$ 
T_0^1M:=TM 
\qb T_1^0M:=T^*M 
\qb T_0^0=M\times\BR, 
$$ 
the tangent, cotangent, and trivial bundle, respectively, and 
$$ 
T_\tau^\sa M:=(TM)^{\otimes\sa}\otimes(T^*M)^{\otimes\tau} 
\qa \sa,\tau\geq1,  
$$ 
endow~$T_\tau^\sa M$  with the tensor bundle metric 
\hb{g_\sa^\tau:=g^{\otimes\sa}\otimes g^{*\,\otimes\tau}}, 
\,\hb{\sa,\tau\in\BN}, and set\footnote{If $V$~is a vector bundle over~$M$, 
then $C^k(V)$~denotes the vector space of $C^k$~sections of~$V$.} 
\begin{equation}\label{F.gst} 
|a|_{g_\sa^\tau}=\sqrt{(a\sn a)_{g_\sa^\tau}} 
:=\sqrt{g_\sa^\tau(a,a)} 
\qa a\in C(T_\tau^\sa M). 
\end{equation}  
By 
\hb{\na=\nag} we denote the Levi--Civita connection and interpret it as 
covariant derivative. Then, given a smooth function~$u$ on~$M$, 
\ \hb{\na^ku\in C^\iy(T_k^0M)} is defined by 
\hb{\na^0u:=u}, 
\ \hb{\na^1u=\na u:=du}, and 
\hb{\na^{k+1}u:=\na(\na^ku)} for 
\hb{k\in\BN}. 

\par 
Let 
\hb{\ka=(x^1,\ldots,x^m)} be a local coordinate system and set 
\hb{\pl_i:=\pl/\pl x^i}. Then 
$$ 
\na^1u=\pl_iu\,dx^i 
\qb \na^2u=\na_{\cona ij}u\,dx^i\otimes dx^j 
 =(\pl_i\pl_ju-\Ga_{ij}^k\pl_ku)dx^i\otimes dx^j,
$$ 
where 
$$ 
\Ga_{ij}^k=\frac12\,g^{k\ell}(\pl_ig_{j\ell}+\pl_jg_{i\ell}-\pl_\ell g_{ij}) 
\qa 1\leq i,j,k\leq m, 
$$ 
are the Christoffel symbols. It follows that 
\begin{equation}\label{F.1} 
|\na u|_{g_0^1}^2=|\na u|_{g^*}^2=g^{ij}\pl_iu\pl_ju 
\end{equation}  
and 
\begin{equation}\label{F.2} 
|\na^2u|_{g_0^2}^2 
=g^{i_1j_1}g^{i_2j_2}\na_{\cona i_1i_2}u\na_{\cona j_1j_2}u. 
\npbd 
\end{equation}  
As usual, 
\hb{d\vol_g=\sqrt{g}\,dx} is the Riemann--Lebesgue volume element 
on~$U_{\coU\ka}$.\po  

\bigskip 
Let 
\hb{\sa,\tau\in\BN}, put 
\hb{V:=T_\tau^\sa M}, and write 
\hb{\vsdot_V:=\vsdot_{g_\sa^\tau}}. Then $\cD(V)$~is the linear subspace 
of~$C^\iy(V)$ of compactly supported sections. 

\par 
For 
\hb{1\leq q<\iy} we set 
$$ 
\|u\|_{L_q(V)}=\|u\|_{L_q\Vg} 
:=\Bigl(\int_M|u|_V^q\,d\vol_g\Bigr)^{1/q}. 
$$ 
Then 
$$ 
L_q(V)=L_q\Vg 
:=
\bigl(\bigl\{\,u\in L_{1,\loc}(M)\ ;\ \Vsdot_{L_q\Mg}<\iy 
\,\bigr\},\ \Vsdot_{L_q\Mg}\bigr) 
$$ 
is the usual Lebesgue space of \hbox{$L_q$~sections} of~$V$. Hence 
\hb{L_q\Mg=L_q\Vg} for 
\hb{V=T_0^0M=M\times\BR}. If 
\hb{k\in\BN}, then 
$$ 
\|u\|_{W_{\coW q}^k(V)}=\|u\|_{W_{\coW q}^k\Vg} 
:=\sum_{j=0}^k\big\|\,|\na^jv|_{g_\sa^{\tau+j}}\big\|_{L_q\Mg} 
$$ 
and 
$$ 
\|u\|_{BC^k(V)}=\|u\|_{BC^k\Vg} 
:=\sum_{j=0}^k\big\|\,|\na^jv|_{g_\sa^{\tau+j}}\big\|_\iy. 
$$ 
The Sobolev space 
\hb{W_{\coW q}^k(V)=W_{\coW q}^k\Vg} is the 
completion of $\cD(V)$ in~$L_q(V)$ with respect to the norm~% 
\hb{\Vsdot_{W_{\coW q}^k(V)}}. If 
\hb{k<s<k+1}, the Slobodeckii space $W_{\coW q}^s(V)$ is obtained by real 
interpolation: 
\begin{equation}\label{F.Ws} 
W_{\coW q}^s(V)=W_{\coW q}^s\Vg 
:=\bigl(W_{\coW q}^k(V),W_{\coW q}^{k+1}(V)\bigr)_{s-k,q}. 
\end{equation}  

\par 
We also need the time-dependent function spaces 
\begin{equation}\label{F.WJ} 
W_{\coW q}^{s/\mf2}(M\times J) 
:=L_q\bigl(J,W_{\coW q}^s(M)\bigr)\cap W_{\coW q}^{s/2}\bigl(J,L_q(M)\bigr) 
\qa 0\leq s\leq2, 
\npbd 
\end{equation}  
Thus 
\hb{W_{\coW q}^{0/\mf2}(M\times J)\doteq L_q\bigl(J,L_q(M)\bigr)}, 
where 
\hb{{}\doteq{}}~means `equivalent norms'.\po  

\par 
By 
\hb{BC^k(V)=BC^k\Vg} we denote the Banach space of all 
\hb{u\in C^k(V)} for which 
\hb{\|u\|_{BC^k(V)}}~is finite, and 
\hb{BC:=BC^0}. Then 
\begin{equation}\label{F.BJ} 
BC^{1/\mf2}(M\times J) 
:=C\bigl(J,BC^1(M)\bigr)\cap C^{1/2}\bigl(J,BC(M)\bigr) 
\npbd 
\end{equation} 
with the usual H\"older space~$C^{1/2}$.\po   

\par
The following lemma shows that in the Euclidean setting these definitions 
return the classical spaces. 
%=========================================================
\begin{lemma}\label{lem-F.X} 
Suppose that   
\hb{\BX\in\{\BR^m,\BH^m\}}, 
\hb{\Mg:=(\BX,g_m)}, and  
\hb{V:=\BX\times F} with 
\hb{F=\BR^{m^\sa\times m^\tau}\simeq T_\tau^\sa\BX}. Then 
$$ 
W_{\coW q}^s(V)=W_{\coW q}^s\BXF 
\qa s\in\BR_+ 
\qb 1\leq q<\iy, 
$$ 
the standard Sobolev--Slobodeckii spaces, and 
$$ 
BC^k(V)=BC^k\BXF 
\qa k\in\BN. 
$$ 
\end{lemma}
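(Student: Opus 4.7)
The plan is to unwind the intrinsic definitions in the Euclidean setting and check that they collapse to the classical ones.

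First I would observe that in coordinates given by the identity chart of $\BX\in\{\BR^m,\BH^m\}$, the components of $g_m$ are constant and equal to the Kronecker delta, so all Christoffel symbols $\Ga_{ij}^k$ vanish. Consequently $\na^k u$ is simply the tensor of $k$-th order partial derivatives of~$u$, and the bundle identification $T_\tau^\sa\BX\simeq\BX\times F$ with $F=\BR^{m^\sa\times m^\tau}$ turns the tensor bundle metric $g_\sa^{\tau+k}$ into the Frobenius inner product on the target space. In particular, using \eqref{F.1}--\eqref{F.2} and their obvious generalization to higher order, one obtains
\[
|\na^k u|_{g_\sa^{\tau+k}}^2
=\sum_{i_1,\ldots,i_k}\sum_{\text{components of }F}|\pl_{i_1}\cdots\pl_{i_k}u|^2,
\]
which is comparable (with multinomial constants depending only on~$k$) to $\sum_{|\al|=k}|\pa u|^2$, the pointwise norm used to define the classical Sobolev space $W_{\coW q}^k\BXF$. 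Combined with $d\vol_{g_m}=dx$, this yields the identity of norms up to equivalence, hence $W_{\coW q}^k(V)=W_{\coW q}^k\BXF$ and $BC^k(V)=BC^k\BXF$ for every $k\in\BN$.

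Next I would extend to noninteger orders via interpolation. The intrinsic space $W_{\coW q}^s(V)$ is by definition \eqref{F.Ws} the real interpolation space $\bigl(W_{\coW q}^k(V),W_{\coW q}^{k+1}(V)\bigr)_{s-k,q}$; the classical Slobodeckii space admits the same real-interpolation description (this is the standard Besov/Slobodeckii identification $W_{\coW q}^s\BXF=(W_{\coW q}^k\BXF,W_{\coW q}^{k+1}\BXF)_{s-k,q}$, valid both on $\BR^m$ and on $\BH^m$ since the half-space has a universal Sobolev extension operator, so interpolation commutes with restriction). Combining this with the integer case already settled, the functor $\prsn_{s-k,q}$ applied to identical pairs of spaces gives the coincidence for all $s\in\BR_+\setminus\BN$.

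The only genuinely delicate step is the interpolation identification on $\BH^m$, since the classical Slobodeckii space there is usually defined by Gagliardo seminorms rather than by interpolation; the equivalence of the two descriptions relies on the existence of a bounded linear extension operator $W_{\coW q}^k(\BH^m)\to W_{\coW q}^k(\BR^m)$ for every~$k$, which then transports the well-known $\BR^m$ interpolation identity to $\BH^m$. Everything else is a routine book-keeping of Christoffel symbols, volume elements, and the constant equivalences arising from multinomial coefficients.
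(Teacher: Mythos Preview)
Your proof is correct and follows essentially the same route as the paper's: the integer case is settled by observing that in the Euclidean metric the Christoffel symbols vanish so the intrinsic norm coincides with the classical one (the paper simply cites \cite[(VII.1.2.2)]{Ama19a} for this), and the noninteger case is then obtained by applying the real interpolation functor to the already identified integer endpoints, invoking the standard fact that $(W_{\coW q}^k,W_{\coW q}^{k+1})_{s-k,q}=W_{\coW q}^s$ on $\BR^m$ and~$\BH^m$ (the paper quotes Theorems VII.2.7.4 and VII.2.8.3 of~\cite{Ama19a} and the Besov identification $B_{qq}^s=W_{\coW q}^s$ for $s\notin\BN$, whereas you argue via an extension operator---these amount to the same thing).
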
\po 
%---------------------------------------------------------
\begin{proof} 
The second assertion is obvious. 

\par 
If 
\hb{k\in\BN}, then the above definition of~$W_{\coW q}^k(V)$ coincides with 
the one in~\cite[(VII.1.2.2)]{Ama19a}. 
Now the first assertion follows 
from~\eqref{F.Ws}, Theorems VII.2.7.4 and VII.2.8.3 
in~\cite{Ama19a}, 
and the fact that the Besov space 
\hb{B_q^s=B_{qq}^s} coincides with~$W_{\coW q}^s$ for 
\hb{s\notin\BN}. 
\end{proof} 
Now we suppose that 
\begin{equation}\label{F.S} 
\bal 
{}% do not remove!
&\Mg\text{ is an oriented urR manifold and}\cr 
\noalign{\vskip-1\jot}
&S\text{ is a membrane without boundary}.
\eal 
\end{equation}  
By Theorem~\ref{thm-H.S}(iii) we can chose a uniform tubular neighborhood 
\begin{equation}\label{F.pS} 
\vp\sco S(\ve)\ra(-\ve,\ve)\times S 
\end{equation}  
in~$\ci M$. We set 
$$ 
M_+:=\vp^{-1}\bigl([0,\ve)\times S\bigr) 
\qb M_-:=\vp^{-1}\bigl((-\ve,0]\times S\bigr)  
\qb M_0:=M\big\backslash\ol{S(\ve/2)}. 
$$ 
By 
\hb{V_\pm:=V_{M_\pm}} and 
\hb{V_0:=V_{M_0}} we denote the restrictions of~$V$ to $M_\pm$ and~$M_0$, 
respectively. Then 
\hb{\bar W_{\coW q}^s(M\ssm S,\,V)}, 
\,\hb{s\in\BR_+}, resp.\ 
\hb{\bar{BC}^k(M\ssm S,\,V)}, 
\,\hb{k\in\BN}, is the closed linear subspace of 
$$ 
W_{\coW q}^s(V_0)\oplus W_{\coW q}^s(V_+)\oplus W_{\coW q}^s(V_-), 
\quad\text{resp.}\quad 
BC^k(V_0)\oplus BC^k(V_+)\oplus BC^k(V_-), 
$$ 
consisting of all 
\hb{u=u_0\oplus u_+\oplus u_-} satisfying 
\hb{(u_0-u_\pm)\sn M_0\cap M_\pm=0}. 
Definitions analogous to \eqref{F.WJ} and \eqref{F.BJ} give 
the Banach spaces 
\hb{\bar W_{\coW q}^{s/\mf2}\bigl((M\ssm S)\times J\bigr)} and 
\hb{\bar{BC}^{1/\mf2}\bigl((M\ssm S)\times J\bigr)}, respectively. 
Note that 
\hb{\bar W_{\coW p}^0(M\ssm S,\,V)=L_p\MV}, since 
\hb{\vol_g(S)=0}.  
%========================================================= 
\setcounter{remark}{1}
\begin{remark}\label{rem-F.H} 
Let 
\hb{\Mg:=(\BR^m,g_m)} and 
\hb{S:=\pl\BH^m}. We can set 
\hb{\ve=\iy} in \eqref{F.pS} to get 
\hb{M_+=\BH^m}, 
\ \hb{M_-=-\BH^m}, and 
\hb{M_0=\es}. Then 
$$ 
\bar W_{\coW q}^s(M\ssm S,\,V) 
=W_{\coW q}^s(\BH^m,F)\oplus W_{\coW q}^s(-\BH^m,F) 
$$ 
and 
$$ 
\bar{BC}^k(M\ssm S,\,V) 
=BC^k(\BH^m,F)\oplus BC^k(-\BH^m,F), 
\npbd  
$$ 
as follows from Lemma~\ref{lem-F.X}.\qed 
\end{remark} 
Assume 
\hb{a\in\bar{BC}(M\ssm S,\,V)}. Then the one-sided limits 
$$ 
\lim_{t\ra0\pm}a\bigl(\ga_p^S(t)\bigr)=:a_\pm(p) 
\qa p\in S, 
$$ 
exist and 
\hb{a_\pm\in BC(S)}. Hence the \emph{jump across}~$S$, 
$$ 
\eea{a}:=\bigl(p\mt\eea{a}(p) 
:=a_+(p)-a_-(p)\bigr)\in BC(S), 
$$ 
is well-defined. Note that $a_\pm$~is the trace of~$a$ on~$S$ `from the 
positive/negative side of~$S$'.   

\par 
Let 
\hb{u\in\bar{BC}^1(M\ssm S)}. Then 
\hb{u\circ\ga_p^S\in\bar{BC}^1\bigl((-\ve,\ve)\ssm\{0\}\bigr)}. We set 
$$ 
\frac{\pl u}{\pl\nu_S}(q) 
:=\pl_1(u\circ\vp^{-1})(\tau,p) 
\qa q\in (M_+\cup M_-)\ssm S, 
$$ 
for 
\hb{\vp(q)=(\tau,p)\in(-\ve,\ve)\times S} with 
\hb{\tau\neq0}. Thus 
\hb{\pl u/\pl\nu_S} is the \emph{normal derivative} of~$u$ in 
\hb{(M_+\cup M_-)\ssm S}, that is, the derivative along the normal 
geodesic~$\ga_p^S$. Hence 
$$ 
\frac{\pl u}{\pl\nu_S}(q) 
=\bigl\dl du(q),\dot\ga_p^S(\tau)\bigr\dr 
=\bigl(\dot\ga_p^S(\tau)\bsn\grad u(q)\bigr)_{g(q)}. 
$$ 
Consequently, the \emph{jump of the normal derivative}, 
$$ 
\Beea{\displaystyle\frac{\pl u}{\pl\nu_S}} 
=\beea{(\nu_S\sn\grad u)} 
\in BC(S) 
\npbd 
$$ 
is also well-defined.\po  
%=========================================================
%=========================================================
\section{The Parabolic Problem on Manifolds}\label{sec-L} 
We presuppose \eqref{F.S} and assume 
\begin{equation}\label{L.a} 
\bal 
{\rm(i)}\quad 
&a\in\bar{BC}^{1/\mf2}\bigl((M\ssm S)\times J\bigr).\cr 
{\rm(ii)}\quad 
&a\geq\ul{\al}>0, 
\eal 
\end{equation}  
where 
\hb{\ul{\al}\leq1}. Then 
$$ 
\cA u:=-\tdiv(a\grad u). 
$$ 

\par 
Fix 
\hb{\da\in C\bigl(\Ga,\{0,1\}\bigr)}. Then 
\hb{\Ga_j:=\da^{-1}(j)}, 
\,\hb{j=0,1}, is open and closed in~$\Ga$ and 
\hb{\Ga_0\cup\Ga_1=\Ga}. Either $\Ga$, $\Ga_0$, or~$\Ga_1$ may be empty. 
In such a case all references to these empty sets have to be disregarded. 
Recall that $\ga$~denotes the trace operator on~$\Ga$ (for any manifold).  

\par 
We introduce an operator 
\hb{\cB=(\cB^0,\cB^1)} on~$\Ga$ by 
\hb{\cB^0u=\ga_{\Ga_0}u}, the Dirichlet boundary operator, on~$\Ga_0$, and 
a~Neumann boundary operator 
$$ 
\cB^1u:=\bigl(\nu\bsn\ga_{\Ga_1}(a\grad u)\bigr)\text{ on }\Ga_1. 
$$ 

\par 
On~$S$ we consider the transmission operator 
\hb{\cC=(\cC^0,\cC^1)}, where 
$$ 
\cC^0u:=\eea{u} 
\qb \cC^1u:=\beea{(\nu_S|a\grad u)}. 
\npbd 
$$ 
Note that 
\hb{\beea{(\nu_S\sn a\grad u)}} equals 
\hb{\eea{a\pl_{\nu_S}u}} and not 
\hb{\beea{\pl_{\nu_S}(au)}}.\po  

\par 
Of concern in this paper is the inhomogeneous linear transmission 
problem 
\begin{equation}\label{L.T} 
\bal 
\pl_tu+\cA u    &=f     &&\text{ on }(M\ssm S)\times J,\cr 
\cB u           &=\vp   &&\text{ on }\Ga\times J,\cr 
\cC u           &=\psi  &&\text{ on }S\times J,\cr 
\ga_0u          &=u_0   &&\text{ on }(M\ssm S)\times\{0\}.  
\eal 
\end{equation}\po  

\par 
We assume that 
\begin{equation}\label{L.ass} 
1<p<\iy,\ p\notin\{3/2,\,3\}, 
\ \text{\eqref{F.S} and \eqref{L.a} are satisfied}. 
\end{equation}  

\par 
For abbreviation we set, for 
\hb{1<q<\iy},  
$$ 
\bar W_{\coW q}^{k/\mf2} 
:=\bar W_{\coW q}^{k/\mf2}\bigl((M\ssm S)\times J\bigr) 
\qa k=0,2, 
$$ 
and introduce the trace spaces 
$$ 
\bal 
\pl W_{\coW q}              
&:=W_{\coW q}^{(2-1/q)/\mf2}(\Ga_0\times J) 
 \oplus W_{\coW q}^{(1-1/q)/\mf2}(\Ga_1\times J),\cr 
\pl_SW_{\coW q}               
&:=W_{\coW q}^{(2-1/q)/\mf2} 
 (S\times J)\oplus W_{\coW q}^{(1-1/q)/\mf2}(S\times J), 
\eal 
$$ 
and 
$$ 
\ga_0\bar W_{\coW q} 
:=\bar W_{\coW q}^{2-2/q}(M\ssm S). 
$$ 
As a rule, we often drop the index~$q$ if 
\hb{q=p}. Thus 
\hb{\bar W^{2/\mf2}=\bar W_{\coW p}^{2/\mf2}}, 
\ \hb{\pl W=\pl W_{\coW p}}, etc. Finally, 
$$ 
\pl_{\cB,\cC}W 
=[\pl W\oplus\pl_SW\oplus\ga_0\bar W]_{\cB,\cC} 
$$ 
is the closed linear subspace of 
\hb{\pl W\oplus\pl_SW\oplus\ga_0\bar W} consisting of all 
$(\vp,\psi,u_0)$ satisfying the compatibility conditions 
$$ 
\bal 
\cB^0u_0 
&=\vp^0(0),     &\quad \cC^0u_0     &=\psi^0(0), 
&&\quad\text{if}\quad 
&3/2   &<p<3,\cr 
\cB(0)u_0 
&=\vp(0),       &\quad \cC(0)u_0    &=\psi(0), 
&&\quad\text{if}\quad 
&3     &<p<\iy,  
\eal 
\npbd 
$$ 
where 
\hb{\vp=(\vp^0,\vp^1)} and 
\hb{\psi=(\psi^0,\psi^1)}. It follows from the anisotropic trace theorem 
(\cite[Example VIII.1.8.6]{Ama19a}) that $\pl_{\cB,\cC}W$~is well-defined. 

\par 
Given Banach spaces $E$ and~$F$, we  write~$\Lis\EF$ for the set
of all isomorphisms in~$\cL\EF$, the Banach space of 
continuous linear maps from~$E$ into~$F$. 

\par 
Now we can formulate the following maximal regularity theorem for 
problem~\eqref{L.T}. Its proof, which needs considerable preparation, 
is found in Section~\ref{sec-P}. 
%=========================================================
\begin{theorem}\label{thm-L.MR} 
Let \eqref{L.ass} be satisfied. Then 
$$ 
\bigl(\pl_t+\cA,\,(\cB,\cC,\ga_0)\bigr) 
\in\Lis\bigl(\bar W_{\coW p}^{(2,1)}, 
L_p(J,L_p(M))\times\pl_{\cB,\cC}W_{\coW p}\bigr). 
$$
\end{theorem}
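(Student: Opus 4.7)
The plan is to adapt the localization-and-perturbation strategy from~\cite{Ama17a} (which handled the case of a urR manifold without boundary and without interface) so as to accommodate here both the boundary $\Ga$ and the transmission interface~$S$. Continuity of the map is routine: the bound on $\cA u$ uses~\eqref{L.a}, the product rule for $\nag$, and the identity~\eqref{F.1}, while the bounds on $\cB u$, $\cC u$, $\ga_0 u$ come from the anisotropic trace theorem. The real content is bijectivity.

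The first reduction is to homogeneous data. By construction, $\pl_{\cB,\cC}W_{\coW p}$ is precisely the range of $(\cB,\cC,\ga_0)$ cut out by the stated compatibility conditions, so a continuous right inverse exists, obtained by a standard extension built locally from the anisotropic trace theorem and pieced together with a partition of unity. Subtracting such a lift reduces the problem to: given $f\in L_p(J,L_p(M))$, find a unique $u\in \bar W_{\coW p}^{2/\mf2}$ with $(\pl_t+\cA)u=f$, $\ga_0 u=0$, $\cB u=0$, $\cC u=0$.

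Next I would localize by choosing an $S$-adapted ur atlas $\gK$ compatible with $\Ga=\Ga_0\cup\Ga_1$, together with a uniformly controlled subordinate partition of unity $\{\pi_\ka\}_{\ka\in\gK}$. Under the standing hypothesis~\eqref{F.S} that $S$ is a membrane \emph{without} boundary, $S$ possesses a uniform tubular neighborhood in $\ci M$ by Theorem~\ref{thm-H.S}(iii); hence $\gK$ can be arranged so that no chart meets both $\Ga$ and~$S$. Every transferred local problem then falls into one of three flat-model classes: (i) a parabolic equation on $\BR^m$; (ii) a parabolic initial-boundary problem on $\BH^m$ with Dirichlet or Neumann condition; (iii) a transmission problem on $\BR^m$ across $\{0\}\times\BR^{m-1}$. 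Frozen-coefficient $L_p$ maximal regularity is known for all three~---~for (i)--(ii) through the Denk--Hieber--Pr\"uss theory cited via~\cite{DHP07a}, for (iii) by~\cite[Theorem~6.5.1]{PrS16a}. The uniform regularity conditions \eqref{U.K}--\eqref{U.g} on $\gK$ and $g$, combined with \eqref{L.a}, ensure that the $\cR$-bounds from these model theorems are uniform in $\ka\in\gK$.

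The main obstacle is the global synthesis, and this is where I expect the bulk of the argument and the five sections of preparation mentioned in the introduction to sit. One constructs a candidate $u=\sum_\ka \pi_\ka u_\ka$ from the local solvers; the finite-multiplicity of $\gK$ gives the global-to-local equivalence
\[
\|u\|_{\bar W_{\coW p}^{2/\mf2}}^p \;\sim\; \sum_{\ka\in\gK}\|\ka_*(\pi_\ka u)\|_{W_{\coW p}^{2/\mf2}}^p,
\]
and analogous equivalences for $f$ and for the trace data. The commutators $[\pi_\ka,\cA]$, $[\pi_\ka,\cB]$, $[\pi_\ka,\cC]$, together with the mismatch between $\cA$ and its chart-frozen constant-coefficient counterpart, contribute error terms of strictly lower order; thanks to the H\"older-in-$t$ hypothesis~\eqref{L.a}(i) they can be absorbed on a sufficiently short subinterval of $J$ by a Neumann series, and the result on all of $J=[0,T]$ follows by iterating finitely many such steps. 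The delicate step, unavailable in the compact case, is to verify that these error terms and the Neumann-series radius are uniformly controllable across the (possibly infinitely many) charts of~$\gK$; this is precisely what the ur~structure~\eqref{U.K}--\eqref{U.g} is designed to guarantee, and the technical heart of the proof is making these uniformities survive every piece of the localization-commutator-glueing sequence.
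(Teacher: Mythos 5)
Your overall architecture coincides with the paper's: localize via an $S$-adapted ur~atlas and a subordinate partition of unity (so that every chart meets at most one of $\Ga$, $S$), solve the three flat-space model problems uniformly in the chart, treat the commutator and coefficient-freezing errors as lower-order perturbations with a small factor in the time variable, close the argument by a Neumann series on a short subinterval, and iterate finitely many times to cover all of $J$. The algebraic bookkeeping the paper uses (retraction/coretraction pairs $(\cR,\cR^c)$ transferring the global problem to a diagonal operator on $\ell_p$-sequence spaces of chart-local spaces, plus perturbations $\wh{\mfsA}$, $\wh{\mfB}$, $\wh{\mfC}$) is precisely the realization of what you describe informally.

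There is, however, one point where your proposal has a real gap rather than merely a different route: you assert that the maximal-regularity bounds for the three frozen-coefficient model problems, taken from~\cite{DHP07a} (full- and half-space) and from~\cite[Theorem~6.5.1]{PrS16a} (transmission), are automatically uniform in $\ka\in\gK$ because the ur~structure controls the relevant parameters. The paper explicitly disagrees with this: it states that although \cite{DHP07a} yields the isomorphism for each fixed $\ka$, it is not at all obvious that the $\gK_\Ga$-uniformity of the operator norms can be read off from those results, and extracting it would require re-examining every estimate in that theory for its parameter dependence. The paper's resolution is to introduce the \emph{uniform parameter-dependent Lopatinskii--Shapiro condition} (Section~\ref{sec-E}), verify it explicitly for $(\cA_\ka,\cB_\ka)$ and for the reflected two-component system encoding the transmission problem (Section~\ref{sec-MR}), and invoke a version of the model-problem theorem that is quantitative in the LS constant. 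In particular, the transmission model case is not handled by citing \cite{PrS16a}; it is converted by reflection across $\pl\BH^m$ to a diagonal $2\times2$ system on $\BH^m$ with a coupled boundary operator $\gB_\ka$, for which the uniform LS condition is checked directly (Proposition~\ref{pro-MR.3}). Without supplying either this explicit LS verification or some alternative quantitative form of the model-problem estimates, your Neumann-series step cannot be carried out with a $\ka$-independent radius, and the argument does not close on a noncompact manifold where $\gK$ is infinite.

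A secondary, more cosmetic divergence: you first reduce to homogeneous data globally by a lifted extension and only then localize. The paper instead localizes first, transfers the full inhomogeneous problem to the $\ell_p$-sequence level, and performs the reduction to $\ga_0\mfv=0$ there (step~(2) of the proof of Proposition~\ref{pro-P.is}). Either order works, but the paper's order avoids having to construct and bound a global lift separately — the coretraction $\cR^c$ together with the chart-local coretractions does that work automatically.
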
 
%=========================================================
%=========================================================
\section{The Uniform Lopatinskii--Shapiro Condition}\label{sec-E} 
In the proof of Theorem~\ref{thm-L.MR} we need to consider systems of 
elliptic boundary value problems. For this we have to be precise on the 
concept of uniform ellipticity. 

\par 
Let $\Mg$ be any Riemannian manifold. We consider a general second order 
linear differential operator on~$M$, 
\begin{equation}\label{E.A} 
\cA u:=-a\btdot\na^2u+a_1\btdot\na u+a_0u 
\end{equation}  
with 
\hb{u=(u^1,\ldots,u^n)} and 
$$ 
a_i\in C(T_0^iM)^{n\times n} 
\qa i=0,1,2 
\qb a_2:=a. 
$$ 
Here 
\hb{\na^iu=(\na^iu^1,\ldots,\na^iu^n)} so that, for example, 
$$ 
a\btdot\na^2u=(a_s^1\btdot\na^2u^s,\ldots,a_s^n\btdot\na^2u^s), 
$$ 
where $s$~is summed from~$1$ to~$n$ and 
\hb{{}\btdot{}}~denotes complete contraction, that is, summation over all 
twice occurring indices in any local coordinate representation. 

\par 
The (principal) symbol~$\gss\cA$ of~$\cA$ is the 
\hb{(n\times n)}-matrix-valued function defined by 
$$ 
\gss\cA(p,\xi):=a(p)\btdot(\xi\otimes\xi) 
\qa p\in M 
\qb \xi\in T_p^*M. 
$$ 
Then $\cA$~is \emph{uniformly normally elliptic} if there exists an 
`ellipticity constant' 
\hb{\ul{\al}\in(0,1)} such that 
$$ 
\sa\bigl(\gss\cA(p,\xi)\bigr)\is[\Re z\geq\ul{\al}] 
=\{\,z\in\BC\ ;\ \Re z\geq\ul{\al}\,\} 
\npbd 
$$ 
for all 
\hb{p\in M} and 
\hb{\xi\in T_p^*M} with 
\hb{|\xi|_{g^*(p)}^2=1}, where $\sa(\cdot)$~denotes the spectrum.\po  

\par 
Suppose 
\hb{\Ga\neq\es} and 
\hb{\cB=(\cB^1,\ldots,\cB^n)} is a linear boundary operator of order 
at most~$1$. More precisely, we assume that there is 
\hb{k\in\{0,\ldots,n\}} such that 
$$ 
\cB^ru=
\left\{ 
\bal 
{}% do not remove! 
&b_0^r\btdot\ga u, 
&\quad 1    &\leq r\leq k,\cr 
&b_1^r\btdot\ga\na u+b_0^r\btdot\ga u, 
&\quad k+1  &\leq r\leq n,\cr 
\eal 
\right. 
$$ 
with 
$$ 
b_i^r\in C(T_0^i\Ga)^n 
\qa 1\leq r\leq n 
\qb i=0,1. 
$$ 
Then the (principal) symbol of~$\cB$ is the 
\hb{(n\times n)}-matrix-valued function~$\gss\cB$ given by 
$$ 
\gss\cB^r(q,\xi):=
\left\{ 
\bal 
{}%  do not remove!!!!
&b_0^r(q),\cr 
&b_1^r(q)\btdot\xi,  
\eal 
\right. 
\qquad q\in\Ga 
\qb \xi\in T_q^*M. 
$$ 
Observe that 
\hb{X\btdot\om=\dl\om,X\dr} if $X$~is a vector, $\om$~a covector field, and 
\hb{\pw} the duality pairing. 

\par 
We denote by 
\hb{\nu^\flat\in T_\Ga^*M} the inner conormal on~$\Ga$ defined in local 
coordinates by 
\hb{\nu^\flat=g_{ij}\nu^jdx^i}. Given 
\hb{q\in\Ga}, we write~$\BB(q)$ for the set of all 
\begin{equation}\label{E.Bq} 
\bal 
{}% do not remove! 
&(\xi,\lda)\in T_q^*M\times[\Re z\geq0]\text{ satisfying}\cr
&\xi\perp\nu^\flat(q)\text{ and }|\xi|^2+|\lda|=1. 
\eal 
\end{equation}  
Then, if 
\hb{(\xi,\lda)\in\BB(q)}, we introduce linear differential  operators 
on~$\BR$ by 
\begin{equation}\label{E.AB} 
\bal 
A(\pl;q,\xi,\lda) 
&:=\lda+\gss\cA\bigl(q,\,\xi+\imi\nu^\flat(q)\pl\bigr),\cr 
B(\pl;q,\xi,\lda) 
&:=\gss\cB\bigl(q,\,\xi+\imi\nu^\flat(q)\pl\bigr), 
\eal 
\npbd 
\end{equation}  
where 
\hb{\imi=\sqrt{-1}}.\po  

\par 
As usual, $C_0(\BR_+,\BC^n)$ is the closed linear subspace 
of $BC(\BR_+,\BC^n)$ consisting of the functions that vanish at infinity. 

\par 
Suppose $\cA$~is uniformly normally elliptic. Then it follows that the 
homogeneous problem 
\begin{equation}\label{E.Aa} 
A(\pl;q,\xi,\lda)v=0\text{ on }\BR 
\end{equation}  
has for each 
\hb{q\in\Ga} and 
\hb{(\xi,\lda)\in\BB(q)} precisely~$n$ linearly independent solutions 
whose restrictions to~$\BR_+$ belong to~$C_0(\BR_+,\BC^n)$. 
We denote their span by $C_0(q,\xi,\lda)$. It is an \hbox{$n$-dimensional} 
linear subspace of $C_0(\BR_+,\BC^n)$. 

\par 
Now we consider the initial value problem on the half-line: 
\begin{equation}\label{E.LS} 
\bal 
A(\pl;q,\xi,\lda)v      &=0\text{ on }\BR_+,\cr 
B(\pl;q,\xi,\lda)v(0)   &=\eta\in\BC^n.   
\eal 
\end{equation}  
Then $\AB$~satisfies the \emph{uniform parameter-dependent 
Lopatinskii-Shapiro} (LS) \emph{conditions} 
if problem~\eqref{E.LS} has for each 
\hb{\eta\in\BC^n} a~unique solution 
\begin{equation}\label{E.v} 
v=R(q,\xi,\lda)\eta\in C_0(q,\xi,\lda) 
\npbd 
\end{equation}  
and 
\begin{equation}\label{E.R} 
\|R(q,\xi,\lda)\|_{\cL(\BC^n,C_0(\BR_+,\BC^n))}\leq c, 
\npbd 
\end{equation}  
unif.\ w.r.t.\  
\hb{q\in\Ga} and 
\hb{(\xi,\lda)\in\BB(q)}.\po   

\par 
The basic feature, which distinguishes the above definition from the usual 
form of the LS~condition, is the requirement of the uniform 
bound~\eqref{E.R}. Without this requirement the LS~condition is much 
simpler to 
formulate (e.g.,~\cite{Ama90a}, \cite{Ama93a}, \cite{DHP03a}, 
\cite{DHP07a}, \cite{PrS16a}, for example) and to verify. 

\par 
It is known that the LS~condition is equivalent to the parameter-dependent 
version of the complementing condition of 
S.~Agmon, A.~Douglis, and L.~Nirenberg~\cite{ADN64a} (see, for example, 
\cite[VII\S9]{LSU68a} or \cite[Section~10.1]{WRL95a}). Using this version, 
it is possible to define a uniform complementing condition which is 
equivalent to~\eqref{E.R} (see \cite{Ama84b} and~\cite{Ama85a}). 
However, that condition is even more difficult to verify in concrete 
situations. We refer to~\cite{AmaVolIII21a} for a detailed exposition of 
all these facts. It should be noted that the uniformity 
condition~\eqref{E.R} is fundamental for the following, since we will have 
to work with infinitely many linear model problems. 
%=========================================================
%=========================================================
\section{Model Cases}\label{sec-M} 
For the proof of Theorem~\ref{thm-L.MR} we have to understand the model 
cases to which problem~\eqref{L.T} reduces in local coordinates. 

\par 
Until further notice, it is assumed that 
$$ 
\bal 
\bt\quad 
&\text{assumption \eqref{L.ass} applies}.\cr 
\bt\quad 
&\gK\text{ is an $S$-adapted ur atlas for }M.  
\eal 
$$ 
By Remark~\ref{rem-U.R}(b) we can assume that 
\hb{\diam(U_{\coU\ka})<\ve/2} for 
\hb{\ka\in\gK_S} where $\ve$~is the width of the tubular neighborhood 
of~$S$. 

\par 
We can choose a family 
\hb{\{\,\pi_\ka,\chi\ ;\ \ka\in\gK\,\}} with the following properties:   
\begin{equation}\label{M.LS} 
\bal 
{\rm(i)}\quad 
&\pi_\ka\in\cD\bigl(U_{\coU\ka},[0,1]\bigr)\text{ for $\ka\in\gK$ and } 
 {\textstyle\sum_\ka}\pi_\ka^2(p)=1\text{ for }p\in M.\cr 
{\rm(ii)}\quad 
&\|\ka_*\pi_\ka\|_{k,\iy}\leq c(k),\ \ka\in\gK,\ k\in\BN.\cr 
{\rm(iii)}\quad 
&\chi\in\cD\bigl((-1,1)^m,[0,1]\bigr) 
 \text{ with }\chi\sn\supp(\ka_*\pi_\ka)=1\text{ for }\ka\in\gK. 
\eal 
\end{equation}  
(See Lemma~3.2 in~\cite{Ama12b} or~\cite{AmaVolIII21a}). We fix an 
\hb{\wt{\om}\in\cD\bigl((-1,1)^m,[0,1]\bigr)} which is equal to~$1$ on  
$\supp(\chi)$. Then 
$$ 
g_\ka:=\wt{\om}\ka_*g+(1-\wt{\om})g_m 
$$ 
is a Riemannian metric on~$\BR^m$ such that 
\begin{equation}\label{M.ka} 
g_\ka\sim g_m 
\qa \ka\in\gK, 
\end{equation}  
and 
\begin{equation}\label{M.g} 
\|g_\ka\|_{k,\iy}\leq c(k) 
\qa \ka\in\gK 
\qb k\in\BN. 
\end{equation}  
This follows from \eqref{U.g}. Furthermore, 
\begin{equation}\label{M.a} 
a_\ka:=\wt{\om}\ka_*a+1-\wt{\om}. 
\end{equation}  
Note that 
\begin{equation}\label{M.aa} 
a_\ka\geq\ul{\al} 
\qa \ka\in\gK. 
\end{equation}  

\par 
We write 
\hb{\grad_\ka:=\grad_{g_\ka}} and 
\hb{\tdiv_\ka:=\tdiv_{g_\ka}} for 
\hb{\ka\in\gK}. Then 
$$ 
\cA_\ka u:=-\tdiv_\ka(a_\ka\grad_\ka u) 
\qa u\in Q_\ka^m. 
$$ 
Let 
\hb{\da_\ka:=\ka_*\da}. Then 
$$ 
\cB_\ka u 
:=\da_\ka\bigl(\nu_\ka\bsn\ga(a_\ka\grad_\ka u)\bigr)_\ka+(1-\da_\ka)\ga u 
\qa \ka\in\gK_\Ga, 
$$ 
where $\nu_\ka$~is the inner normal on~$\pl\BH^m$ with respect to 
$(\BH^m,g_\ka)$, and 
\hb{\prsn_\ka=g_\ka}. If 
\hb{\ka\in\gK_S}, then 
$$ 
\cC_\ka u:=\Bigl(\eea{u},\beea{(\nu_\ka\sn a_\ka\grad_\ka u)_\ka}\Bigr) 
\text{ on }\pl\BH^m.  
$$ 
Using these notations, we consider the three model problems: 
\begin{equation}\label{M.P1} 
\pl_tu+\cA_\ka u=f_\ka\text{ on }\BR^m\times J,
\end{equation}  
and 
\begin{equation}\label{M.P2} 
\bal 
\pl_tu+\cA_\ka u&=f_\ka     &&\text{ on }\BH^m\times J,\cr 
\cB_\ka u       &=\vp_\ka   &&\text{ on }\pl\BH^m\times J,
\eal 
\end{equation}  
and 
\begin{equation}\label{M.P3} 
\bal 
\pl_tu+\cA_\ka u&=f_\ka     &&\text{ on }(\BR^m\ssm\pl\BH^m)\times J,\cr 
\cC_\ka u       &=\psi_\ka  &&\text{ on }\pl\BH^m\times J. 
\eal 
\end{equation}  

\par 
In the following two sections we prove that each one of them, 
complemented by appropriate initial and compatibility conditions, enjoys a 
maximal regularity result, unif.\ w.r.t.~$\ka$. 
%=========================================================
%=========================================================
\section{Continuity}\label{sec-C} 
First we note that
\begin{equation}\label{C.g} 
\sqrt{g_\ka}\sim1 
\qa \ka\in\gK, 
\end{equation}  
and, given 
\hb{k\in\BN}, 
\begin{equation}\label{C.Nd} 
\sum_{i=0}^k|\naka^iu|\sim\sum_{|\al|\leq k}|\pa u| 
\qa \ka\in\gK 
\qb u\in C^k(Q_\ka^m), 
\npbd 
\end{equation}  
with 
\hb{\naka u:=\ka_*\na\ka^*u} 
(cf.~\cite[Lemma~3.1]{Ama12b} or~\cite{AmaVolIII21a}). 

\par 
We set 
\begin{equation}\label{C.X} 
\BX_\ka:=  
\left\{ 
\bal 
{}%  do not remove!!!!
&\BR^m,
 &&\quad \text{if }\ka\in\gK_0:=\gK\ssm(\gK_\Ga\cup\gK_S),\cr 
&\BH^m,
 &&\quad \text{if }\ka\in\gK_\Ga,\cr 
&\BR^m\ssm\pl\BH^m,
 &&\quad \text{if }\ka\in\gK_S. 
\eal 
\right. 
\end{equation}  
Then 
$$ 
W_{\coW\ka}^s:=W_{\coW p}^s(\BX_\ka,g_\ka) 
\qa \ka\in\gK_0\cup\gK_\Ga, 
$$ 
and 
$$ 
\bar W_{\coW\ka}^s:=\bar W_{\coW p}^s(\BR^m\ssm\pl\BH^m,\,g_\ka) 
\qa \ka\in\gK_S, 
$$ 
where
\hb{0\leq s\leq2}. For the sake of a unified presentation,  
$$ 
\sW_{\coW\ka}^s:=  
\left\{ 
\bal 
{}%  do not remove!!!!
&W_\ka^s,
 &&\quad \text{if }\ka\in\gK\ssm\gK_S,\cr 
&\bar W_{\coW\ka}^s,
 &&\quad \text{if }\ka\in\gK_S. 
\eal 
\right. 
$$ 
If 
\hb{\BX\in\{\BR^m,\BH^m\}}, then 
\hb{W_{\coW p}^s(\BX):=W_{\coW p}^s(\BX,g_m)}. It is a consequence of 
\eqref{C.g} and \eqref{C.Nd} that 
\begin{equation}\label{C.Wk} 
\sW_{\coW\ka}^k\doteq\sW_{\coW p}^k(\BX_\ka) 
\qquad\text{$\gK$-unif.}, 
\npbd 
\end{equation}  
where 
\hb{{}\doteq{}}~stands for `equal except for equivalent norms'.\po  

\par 
Since 
$$ 
\sW_{\coW p}^s(\BX) 
=\bigl(\sW_{\coW p}^k(\BX),\sW_{\coW p}^{k+1}(\BX)\bigr)_{s-k,p}  
\qa k<s<k+1, 
$$ 
(cf.~\cite[Theorems VII.2.7.4 and VII.2.8.3, as well as 
(VII.3.6.3)]{Ama19a}), 
it follows from definition~\eqref{F.Ws} and from \eqref{C.Wk} that 
\begin{equation}\label{C.W} 
\sW_{\coW\ka}^s\doteq\sW_{\coW p}^s(\BX_\ka) 
\qquad\text{$\gK$-unif.} 
\end{equation}  
Due to \eqref{C.g} and \eqref{C.Nd} we get, with an analogous definition  
of~$\sBC$, 
\begin{equation}\label{C.B} 
\sBC_\ka^k 
:=\sBC^k(\BX_\ka,g_\ka)
\doteq\sBC^k(\BX_\ka):=\sBC^k(\BX_\ka,g_m) 
\qquad\text{$\gK$-unif.} 
\end{equation}  
Using this, \eqref{F.WJ}, and \eqref{F.BJ}, we infer that 
\begin{equation}\label{C.WW} 
\sW_{\coW\ka}^{s/\mf2}\doteq\sW_{\coW p}^{s/\mf2}(\BX_\ka\times J) 
\qb \sBC_\ka^{k/\mf2}\doteq\sBC^{k/\mf2}(\BX_\ka\times J) 
\qquad \text{$\gK$-unif}.
\end{equation}\po   

\par 
First we note that \eqref{U.K}, \eqref{L.a}, \eqref{M.a}, and 
\eqref{C.Nd} imply 
\begin{equation}\label{C.ak} 
a_\ka\in\sBC_\ka^{1/\mf2} 
\qquad\text{$\gK$-unif}. 
\end{equation}  
In local coordinates, 
\hb{\grad u=g^{ij}\pl_ju\,\pl/\pl x^i}. Using this, \eqref{U.g}, and 
\eqref{C.ak} we deduce that 
\begin{equation}\label{C.a} 
\|\grad_\ka a_\ka\|_{\sBC_\ka(T\BX_\ka\times J)}\leq c 
\qquad\text{$\gK$-unif.} 
\end{equation}  

\par 
Given a vector field 
\hb{Y=Y^i\pl/\pl x^i}, it holds 
\hb{\tdiv Y=\sqrt{g}^{\kern1pt-1}\pl_i\bigl(\sqrt{g}\,Y^i\bigr)}. 
By this and the above it is verified that 
\begin{equation}\label{C.Aku} 
\cA_\ka\in\cL(\sW_{\coW\ka}^{2/\mf2},\sW_{\coW\ka}^{0/\mf2}) 
\qquad\text{$\gK$-unif.} 
\end{equation}  

\par 
Now we consider Sobolev--Slobodeckii spaces on 
\hb{\pl\BH^m\simeq\BR^{m-1}}. We set 
\hb{g_\ka^\fdot:=g_{\ka\pl\BH^m}} for 
\hb{\ka\in\gK_\Ga}. Then 
\begin{equation}\label{C.diW} 
\pl_iW_{\coW\ka} 
:=W_{\coW p}^{(2-i-1/p)/\mf2}(\pl\BH^m\times J,\,g_\ka^\fdot+dt^2) 
\qa i=0,1, 
\npbd 
\end{equation}  
and 
$$ 
\pl W_{\coW\ka}:=(1-\da_\ka)\pl_0W_{\coW\ka}+\da_\ka\pl_1W_{\coW\ka} 
\qa \ka\in\gK_\Ga. 
$$ 

\par 
Suppose 
\hb{0<\sa<s<1}. Then 
\begin{equation}\label{C.BB} 
\bal 
BC^{1/\mf2}(\BR^{m-1}\times J) 
&=C\bigl(J,BC^1(\BR^{m-1})\bigr)\cap C^{1/2}\bigl(J,BC(\BR^{m-1})\bigr)\cr 
&=C\bigl(J,BC^1(\BR^{m-1})\bigr)\cap C^{1/2}\bigl(J,\BUC(\BR^{m-1})\bigr)\cr 
&\hr B\bigl(J,\BUC^s(\BR^{m-1})\bigr) 
 \cap C^{s/2}\bigl(J,\BUC(\BR^{m-1})\bigr)\cr 
&\doteq\BUC^{s/\mf2}(\BR^{m-1}\times J) 
 \hr b_\iy^{\sa/\mf2}(\BR^{m-1}\times J), 
\eal 
\end{equation}  
where the $\BUC^\rho$~are the usual H\"older spaces and $b_\iy^{\sa/\mf2}$ 
is an anisotropic little Besov space. Indeed, the first 
embedding follows from the mean value theorem and by using the localized 
H\"older norm (cf.~\cite[(VII.3.7.1)]{Ama19a}). 
For the norm equivalence we refer to definition~(VII.3.6.4) and 
Remark~VII.3.6.4. The last embedding is implied by Lemma~VII.2.2.3 and 
Theorem~VII.7.3.4. By Theorem~VII.2.7.4 in~\cite{Ama19a}, 
\begin{equation}\label{C.int} 
\bal 
{}% do not remove! 
&b_\iy^{\sa/\mf2}(\BR^{m-1}\times J)\cr 
&\qquad{} 
 \doteq\bigl(\BUC^{2/\mf2}(\BR^{m-1}\times J),\, 
 \BUC^{0/\mf2}(\BR^{m-1}\times J)\bigr)_{\sa/2,\iy}^0. 
\eal 
\end{equation}  
We deduce from \eqref{U.g} and \eqref{C.Nd} that 
$$ 
\BUC_\ka^{s/\mf2}(\pl\BH^m\times J) 
:=\BUC^{s/\mf2}(\pl\BH^m\times J,\,g_\ka^\fdot+dt^2) 
\doteq\BUC^{s/\mf2}(\BR^{m-1}\times J) 
$$ 
$\gK_\Ga$-unif. Now it follows from \eqref{C.BB} and \eqref{C.int} that 
$$ 
BC_\ka^{1/\mf2}(\pl\BH^m\times J) 
\hr b_{\iy,\ka}^{\sa/\mf2}(\pl\BH^m\times J) 
\qquad\text{$\gK_\Ga$-unif.} 
$$ 
Since, trivially, 
\hb{\ga\in\cL\bigl(BC^{1/\mf2}(\BH^m\times J), 
   \,BC^{1/\mf2}(\pl\BH^m\times J)\bigr)}, it is now clear that 
\begin{equation}\label{C.ab} 
\ga a_\ka\in b_{\iy,\ka}^{\sa/\mf2}(\pl\BH^m\times J) 
\qquad\text{$\gK_\Ga$-unif.} 
\end{equation}  

\par 
In local coordinates 
\begin{equation}\label{C.nu} 
\nu_\ka=\frac{g_\ka^{1i}}{\sqrt{g_\ka^{11}}}\,\frac\pl{\pl x^i}. 
\end{equation}  
Hence 
\hb{\da_\ka\cB_\ka u=\da_\ka b_\ka^i\ga\pl_iu 
   =\da_\ka\ga a_\ka\nu_\ka^i\ga\pl_iu_\ka}, 
where it follows from \eqref{U.g}, 
\eqref{M.aa}, and 
\hb{\|a_\ka\|_\iy\leq c} that 
\begin{equation}\label{C.b1} 
1/c\leq b_\ka^1=\ga a_\ka\sqrt{g_\ka^{11}}\leq c 
\end{equation}  
and, from \eqref{C.ab}, 
$$ 
\|b_\ka^i\|_{b_{\iy,\ka}^{\sa/\mf2}(\pl\BH^m\times J)}\leq c 
\qa 1\leq i\leq m, 
$$ 
for 
\hb{\ka\in\gK_\Ga}. Thus it is a consequence of \eqref{C.ab}, 
\eqref{C.b1}, and the boundary operator retraction theorem 
\cite[Theorem~VIII.2.2.1]{Ama19a} 
that\addtocounter{footnote}{1}\footnotetext{An operator 
\hb{r\in\cL\EF} is a retraction if it has a continuous right inverse, 
a~coretraction~$r^c$. Then $(r,r^c)$ is an 
\emph{\hbox{r-c} pair for}~$\EF$.} 
\begin{equation}\label{C.RB} 
\addtocounter{footnote}{-1}
\cB_\ka\text{ is a $\gK_\Ga$-uniform retraction\footnotemark\  
from $W_{\coW\ka}^{2/\mf2}$ onto }\pl W_{\coW\ka}. 
\end{equation}  
Clearly, `\hbox{$\gK_\Ga$-uniform}' means that there exists a 
coretraction~$\cB_\ka^c$ such that $\|\cB_\ka\|$ and~$\|\cB_\ka^c\|$ are 
\hbox{$\gK_\Ga$-uniformly} bounded. 

\par 
Obviously, 
\begin{equation}\label{C.WSJ} 
\pl_SW_{\coW\ka}:=\pl_0W_{\coW\ka}\oplus\pl_1W_{\coW\ka} 
\qa \ka\in\gK_S. 
\end{equation}  
If we replace in the preceding arguments the boundary 
operator retraction argument by Theorem~VIII.2.3.3 of~\cite{Ama19a}, 
we find that 
\begin{equation}\label{C.Rc} 
\cC_\ka\text{ is a $\gK_S$-uniform retraction 
from $\bar W_{\coW\ka}^{2/\mf2}$ onto }\pl_S W_{\coW\ka}. 
\end{equation}  

\par 
It follows from \eqref{C.W} that 
\begin{equation}\label{C.dW} 
\ga_0\sW_{\coW\ka}\doteq\ga_0\sW_{\coW p}(\BX_\ka) 
\qa \ka\in\gK. 
\end{equation}  
The anisotropic trace theorem (\cite[Corollary~VII.4.6.2, 
Theorems VIII.1.2.9 and~VIII.1.3.1]{Ama19a}) implies that 
\begin{equation}\label{C.g0} 
\ga_0\in\cL\bigl(W_{\coW p}^{2/\mf2}(\BX\times J),B_p^{2-2/p}(\BX)\bigr) 
\qa \BX\in\{\BR^m,\BH^m\}, 
\end{equation}  
is a retraction. Using Theorems VII.2.7.4 and~VII.2.8.3, 
definition~VII.3.6.3 and Remark~VII.3.6.4 of~\cite{Ama19a}, we get 
\begin{equation}\label{C.BW} 
B_p^{2-2/p}(\BX)\doteq W_{\coW p}^{2-2/p}(\BX). 
\end{equation}  
Now we infer from \eqref{C.WW}, \hbox{\eqref{C.dW}--\eqref{C.BW}},   
%\eqref{C.g0} 
and \eqref{C.W} that 
\begin{equation}\label{C.0} 
\ga_0\text{ is a $\gK$-uniform retraction 
from $\sW_{\coW\ka}^{2/\mf2}$ onto }\ga_0\sW_{\coW\ka}. 
\end{equation}  
%=========================================================
%=========================================================
\section{Maximal Regularity}\label{sec-MR} 
First we rewrite $\AB$ in terms of covariant derivatives. 
For this we define 
$$ 
a^\nat\in\bar{BC}^{1/\mf2}\bigl(T_0^2(M\ssm S)\times J\bigr) 
$$ 
in local coordinates by 
$$ 
a^\nat:=ag^{ij}\frac\pl{\pl x^i}\otimes\frac\pl{\pl x^j}. 
$$ 
Then we get 
\begin{equation}\label{MR.A} 
\cA u=-a\Da u-(\grad a\sn\grad u) 
=-a^\nat\btdot\na^2u-(\grad a)\btdot\na u, 
\end{equation}  
where $\Da$~is the Laplace--Beltrami operator 
(e.g.,~\cite[(2.4.10)]{Tay96a}). Consequently, 
\begin{equation}\label{MR.sA} 
\gss\cA(q,t,\xi)=a(q,t)\,|\xi|^2_{g^*(q)} 
\qa q\in M\ssm S 
\qb \xi\in T_q^*(M\ssm S) 
\qb t\in J. 
\end{equation}  

\par 
For the boundary operator we find
\begin{equation}\label{MR.B} 
\cB^1u=\ga a(\nu^\flat\sn\ga\na u)_{g^*}. 
\end{equation}  
Hence 
\begin{equation}\label{MR.sB} 
\gss\cB^1u(q,t,\xi)=a(q,t)\bigl(\nu^\flat(q)\bsn\xi\bigr)_{g^*(q)} 
\qa q\in\Ga 
\qb \xi\in T_q^*M 
\qb t\in J. 
\end{equation}  
Clearly, these formulas apply to any oriented Riemannian manifold, thus to 
$(\pl\BH^m,g_\ka)$, 
\ \hb{\ka\in\gK_\Ga}. 

\par 
It follows from \eqref{M.aa} and \eqref{MR.sA} that 
$$ 
\gss\cA_\ka(x,t,\xi) 
=a_\ka(x,t)\,|\xi|_{g_\ka^*(x)}^2\geq\ul{\al}\,|\xi|_{g_\ka^*(x)}^2 
$$ 
for 
\,\hb{x\in\BX_\ka}, 
\,\hb{\xi\in T_x^*\BX_\ka}, 
\,\hb{t\in J}, and   
\hb{\ka\in\gK}. Hence 
\begin{equation}\label{MR.ne} 
\cA_\ka\text{ is uniformly normally elliptic, unif.\ w.r.t.\ }\ka\in\gK 
\text{ and }t\in J.  
\end{equation}  

\par 
We begin with the full-space problem.
%=========================================================
\begin{proposition}\label{pro-MR.1} 
It holds 
$$ 
(\pl_t+\cA_\ka,\,\ga_0) 
\in\Lis(W_{\coW\ka}^{2/\mf2},\,W_{\coW\ka}^{0/\mf2}\times\ga_0W_{\coW\ka}) 
\qquad\text{$\gK_0$-unif.}, 
$$ 
that is, 
$$ 
\|(\pl_t+\cA_\ka,\,\ga_0)\| 
+\|(\pl_t+\cA_\ka,\,\ga_0)^{-1}\|\leq c 
\qa \ka\in\gK_0. 
$$ 
\end{proposition}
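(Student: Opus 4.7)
The plan is to reduce the statement to the classical maximal $L_p$-regularity theorem for the Cauchy problem for a variable-coefficient parabolic operator on~$\BR^m$, and then to track the constants so that the bounds become \hbox{$\gK_0$-uniform}. First I would rewrite $\cA_\ka$ by means of \eqref{MR.A} as
\[
\cA_\ka u=-a_\ka^\nat\btdot\naka^2 u-(\grad_\ka a_\ka)\btdot\naka u,
\]
exhibiting it as a second-order non-divergence operator on~$\BR^m$. By \eqref{MR.ne} it is uniformly normally elliptic with ellipticity constant~$\ul\al$ independent of~$\ka$, and \eqref{C.ak}, \eqref{C.a}, together with \eqref{M.ka}, \eqref{M.g}, supply \hbox{$\gK_0$-uniform} bounds on the coefficients in $BC^{1/\mf2}(\BR^m\times J)$.

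Second, for each fixed $\ka\in\gK_0$ the Cauchy problem falls within the scope of the standard maximal regularity theorem for non-autonomous parabolic equations on~$\BR^m$: given $(f,u_0)\in W_{\coW\ka}^{0/\mf2}\times\ga_0 W_{\coW\ka}$, there is a unique $u\in W_{\coW\ka}^{2/\mf2}$ solving $\pl_tu+\cA_\ka u=f$, $\ga_0 u=u_0$. The standard proof freezes $a_\ka$ at a point $(x_0,t_0)$, solves the resulting constant-coefficient problem via operator-valued Fourier multipliers or \hbox{$R$-boundedness} of the $H^\iy$-calculus in the UMD space~$L_p$, and assembles the full result through a partition of unity and a perturbation argument exploiting the $BC^{1/\mf2}$ modulus of continuity of~$a_\ka$. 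The initial datum is accommodated by the retraction property~\eqref{C.0}, which identifies $\ga_0 W_{\coW\ka}$ with the appropriate trace space \hbox{$\gK_0$-uniformly}, and the norm equivalences \eqref{C.W}, \eqref{C.WW} translate everything back from the Euclidean $(\BR^m,g_m)$ setting to the $(\BR^m,g_\ka)$ one without losing uniformity.

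The main obstacle is the \hbox{$\gK_0$-uniform} estimate on the operator norm and its inverse. One has to verify that the constant produced by the cited maximal regularity result depends only on $p$, $m$, on the ellipticity constant~$\ul\al$, and on the \hbox{$\gK_0$-uniform} $BC^{1/\mf2}$ bounds for $a_\ka$ and~$g_\ka$; no hidden dependence on the particular chart~$\ka$ may enter through the freezing point, the perturbation radius, or the Fourier multiplier constant. This quantitative tracking is precisely what was carried out in the author's earlier paper~\cite{Ama17a} for the boundaryless case, and its conclusions transfer verbatim here since $\gK_0$-charts see neither the boundary nor the membrane. Combining the \hbox{$\gK_0$-uniform} Euclidean isomorphism with \eqref{C.W}, \eqref{C.WW}, and~\eqref{C.0} then yields the proposition.
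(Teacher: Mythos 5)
Your proposal is correct and follows essentially the same route as the paper: exhibit $\cA_\ka$ as a uniformly normally elliptic second-order operator on $(\BR^m,g_\ka)$ with $\gK_0$-uniform $BC^{1/\mf2}$ coefficient and metric bounds, invoke classical $L_p$-maximal regularity for the full-space Cauchy problem, and transfer back via the $\gK_0$-uniform norm equivalences and the retraction property~\eqref{C.0}. The paper compresses this same argument by pointing directly to Corollary~9.7 of~\cite{AHS94a}, Theorem~III.4.10.8 of~\cite{Ama95a}, and (the proof of) Theorem~7.1 of~\cite{Ama04a} as the chain of results that deliver the quantitative uniform estimate, which is precisely the mechanism (bounded $H^\iy$-calculus, freezing, perturbation) you describe.
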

%---------------------------------------------------------
\begin{proof} 
It is obvious from \eqref{C.Aku} and \eqref{C.0} that 
$$ 
(\pl_t+\cA_\ka,\,\ga_0) 
\in\cL(W_{\coW\ka}^{2/\mf2},\,W_{\coW\ka}^{0/\mf2}\times\ga_0 W_{\coW\ka}) 
\qquad\text{$\gK_0$-unif.}
$$ 
Due to \eqref{MR.ne}, the assertion now follows from Corollary~9.7 
in~\cite{AHS94a} and Theorem~III.4.10.8 in~\cite{Ama95a} and (the proof of) 
Theorem~7.1 in~\cite{Ama04a}. (See \cite{AmaVolIII21a} for a different 
demonstration.) 
\end{proof} 
Next we study the case where 
\hb{\ka\in\gK_\Ga}. For this we first establish the validity of the 
uniform LS condition. Henceforth, it is always assumed that 
\begin{equation}\label{MR.Z} 
\za=(x,\xi,\lda)\text{ with }x\in\pl\BH^m\text{ and }(\xi,\lda)\in\BB(x). 
\end{equation}  
We fix any 
\hb{t\in J} and omit it from the notation. The reader will easily check 
that all estimates are uniform with respect to 
\hb{t\in J}. From \eqref{MR.sA} we see that the first equation in 
\eqref{E.Aa} has the form  
\begin{equation}\label{MR.vR} 
\ddot v=\rho_\ka^2(\za)v\text{ on }\BR, 
\end{equation}  
where 
\begin{equation}\label{MR.rh} 
\rho_\ka(\za) 
:=\sqrt{\frac\lda{a_\ka(x)}+|\xi|_{g_\ka^*(x)}^2}\in\BC 
\npbd 
\end{equation}  
with the principal value of the square root.\po 

\par 
Suppose 
\hb{|\xi|_{g_\ka^*(x)}^2\leq1/2}. Then 
\hb{\za\in\BB(x)} implies 
\hb{\rho_\ka^2(\za)\geq1/2a_\ka(x)}. Otherwise, 
\hb{\rho_\ka^2(\za)\geq1/2}. Thus, since 
\hb{\|a_\ka\|_\iy\leq c}, we find a 
\hb{\ba>0} such that 
\begin{equation}\label{MR.Rr} 
\Re\rho_\ka(\za)\geq\ba 
\qa \ka\in\gK_\Ga. 
\end{equation}  
From 
\hb{a_\ka\geq\ul{\al}} we infer that 
\hb{|\rho_\ka(\za)|\leq c} for 
\hb{\ka\in\gK_\Ga}. Set 
\begin{equation}\label{MR.v} 
v_\ka(\za)(s):=e^{-\rho_\ka(\za)s} 
\qa s\geq0. 
\npbd 
\end{equation}  
Then $\BC v_\ka(\za)$~is the subspace of $C_0\RC$ of decaying solutions 
of~\eqref{MR.vR}.\po 

\par 
Let 
\hb{\ka\in\gK_{\Ga_0}} so that 
\hb{\cB_\ka=\ga}, the Dirichlet operator on~$\pl\BH^m$. Then 
(recall~\eqref{E.v}), 
\,\hb{R_\ka(\za)\eta=\eta v_\ka(\za)}. Thus, by~\eqref{MR.Rr}, 
$$ 
\|R_\ka(\za)\|_{\cL(\BC,C_0(\BR_+,\BC))}\leq 1 
\qa \ka\in\gK_{\Ga_0}. 
$$ 

\par 
Now assume 
\hb{\ka\in\gK_{\Ga_1}}. Then we see from \eqref{MR.B} and \eqref{MR.v} that 
\begin{equation}\label{MR.B0} 
\cB_\ka(\pl;\za)v_\ka(\za)(0)=-\imi a_\ka(x)\rho_\ka(\za). 
\end{equation}  
Consequently, 
$$ 
\|R_\ka(\za)\|_{\cL(\BC,C_0(\BR_+,\BC))} 
=\frac1{a_\ka(x)\,|\rho_\ka(\za)|} 
\leq\frac1{\ul{\al}\ba} 
\qa \ka\in\gK_{\Ga_1}. 
$$ 
This proves that $(\cA_\ka,\cB_\ka)$ satisfies the uniform 
parameter-dependent LS condition, 
unif.\ w.r.t.\ 
\hb{\ka\in\gK_\Ga} and 
\hb{t\in J}.   
%=========================================================
\begin{proposition}\label{pro-MR.2} 
It holds 
$$ 
\bigl(\pl_t+\cA_\ka,\,(\cB_\ka,\ga_0)\bigr) 
\in\Lis\bigl(W_{\coW\ka}^{2/\mf2}, 
\,W_{\coW\ka}^{0/\mf2} 
\oplus[\pl W_{\coW\ka}\oplus\ga_0W_{\coW\ka}]_{\cB_\ka}\bigr) 
\qquad\text{$\gK_\Ga$-unif.} 
$$ 
\end{proposition}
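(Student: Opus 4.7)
The plan is to mirror the strategy of Proposition~\ref{pro-MR.1}: first establish continuity of the operator with $\gK_\Ga$-uniform bound, then invoke a parameter-dependent maximal regularity theorem whose hypotheses have now been verified $\gK_\Ga$-uniformly.

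For the boundedness, I would assemble the pieces from Section~\ref{sec-C}. By~\eqref{C.Aku}, $\pl_t+\cA_\ka$ is continuous from $W_{\coW\ka}^{2/\mf2}$ into $W_{\coW\ka}^{0/\mf2}$ with $\gK$-uniform norm. By~\eqref{C.RB}, $\cB_\ka$ is a $\gK_\Ga$-uniform retraction from $W_{\coW\ka}^{2/\mf2}$ onto $\pl W_{\coW\ka}$, and by~\eqref{C.0}, $\ga_0$ is a $\gK$-uniform retraction onto $\ga_0W_{\coW\ka}$. The compatibility subspace $[\pl W_{\coW\ka}\oplus\ga_0W_{\coW\ka}]_{\cB_\ka}$ is well-defined via the anisotropic trace theorem, exactly as $\pl_{\cB,\cC}W$ was in Section~\ref{sec-L}. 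Therefore $(\pl_t+\cA_\ka,(\cB_\ka,\ga_0))$ lies in the claimed space of continuous operators with $\gK_\Ga$-uniformly bounded norm.

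For invertibility with uniform inverse, the key inputs are already in place: the uniform normal ellipticity~\eqref{MR.ne}, the Hölder/Lipschitz coefficient regularity~\eqref{C.ak}--\eqref{C.ab} and~\eqref{C.b1} (making $a_\ka$ and the coefficients of $\cB_\ka$ uniformly Hölder-continuous in $t$ and continuously differentiable in space), and the $\gK_\Ga$-uniform parameter-dependent Lopatinskii--Shapiro condition just established by the explicit computations~\eqref{MR.vR}--\eqref{MR.B0}. With these inputs, I would invoke the same trio of references used in Proposition~\ref{pro-MR.1}, namely Corollary~9.7 of~\cite{AHS94a}, Theorem~III.4.10.8 of~\cite{Ama95a}, and Theorem~7.1 of~\cite{Ama04a}, now applied to the half-space problem. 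Each of these theorems yields an isomorphism statement, and the \emph{constants} in the resulting estimates depend only on the ellipticity constant $\ul{\al}$, the LS-bound~\eqref{E.R}, and the coefficient norms---all of which are $\gK_\Ga$-uniform by construction.

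The main obstacle, as in Proposition~\ref{pro-MR.1}, is not existence of the inverse for each fixed $\ka$ (which is essentially classical), but rather tracking that the constants in the cited maximal regularity theorems depend only on the quantities that have been shown to be $\gK_\Ga$-uniform. This is where the uniform formulation of the LS~condition in Section~\ref{sec-E} is essential: the usual pointwise LS~condition would yield only $\ka$-dependent estimates, whereas the uniform bound~\eqref{E.R}, verified here via the explicit bound $\|R_\ka(\za)\|\leq 1/(\ul{\al}\ba)$ coming from~\eqref{MR.Rr} in the Neumann case (and trivially in the Dirichlet case), gives a single constant serving every $\ka\in\gK_\Ga$. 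Once that dependence is traced through the cited references---as detailed in~\cite{AmaVolIII21a}---the $\gK_\Ga$-uniform bound on $\|(\pl_t+\cA_\ka,(\cB_\ka,\ga_0))^{-1}\|$ follows.
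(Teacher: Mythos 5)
Your argument for the continuity direction matches the paper: combine \eqref{C.Aku}, \eqref{C.RB}, \eqref{C.0}, and the anisotropic trace theorem to see the operator is $\gK_\Ga$-uniformly bounded with a well-defined target space. So far so good.

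For the invertibility, however, you cite the trio from Proposition~\ref{pro-MR.1} (Corollary~9.7 of \cite{AHS94a}, Theorem~III.4.10.8 of \cite{Ama95a}, Theorem~7.1 of \cite{Ama04a}) ``applied to the half-space problem.'' This is not what the paper does, and the substitution is not innocent. That trio is geared to the \emph{Cauchy problem}: the $H_\infty$-calculus result in \cite{AHS94a} concerns the realization of $\cA_\ka$ under \emph{homogeneous} boundary conditions, and the two Amann references then deliver maximal regularity of the resulting autonomous and nonautonomous abstract initial value problems. To get the full isomorphism onto $W_{\coW\ka}^{0/\mf2}\oplus[\pl W_{\coW\ka}\oplus\ga_0W_{\coW\ka}]_{\cB_\ka}$ --- where $\cB_\ka u=\vp_\ka$ with arbitrary $\vp_\ka$ in the trace space --- one must additionally lift the boundary data using the coretraction of $\cB_\ka$ and solve a perturbed problem with zero boundary data, and this lifting step is nowhere in your proposal. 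The paper avoids this route precisely because it instead appeals directly to a boundary value problem maximal regularity theorem: its actual proof defers to \cite{AmaVolIII21a}, and in the remarks immediately afterward it identifies Proposition~6.4 of \cite{DHP07a} as what gives the isomorphism for each fixed $\ka$ while flagging that extracting $\gK_\Ga$-uniformity from \cite{DHP07a} ``would be no easy task.'' In short: you have both chosen intermediate references that solve a structurally simpler problem than the one at hand, and omitted the reduction step that would bridge the gap; the paper's choice of references is not interchangeable with yours, and the missing lifting argument is a genuine hole rather than a detail.
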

%---------------------------------------------------------
\begin{proof} 
We deduce from \eqref{C.RB}, \eqref{C.0}, and 
\cite[Example~VIII.1.8.6]{Ama19a} that 
$$ 
[\pl W_{\coW\ka}\oplus\ga_0W_{\coW\ka}]_{\cB_\ka} 
$$ 
is a well-defined closed linear subspace of 
\hb{\pl W_{\coW\ka}\oplus\ga_0W_{\coW\ka}} and, using 
also~\eqref{C.Aku}, 
$$ 
\bigl(\pl_t+\cA_\ka,\,(\cB_\ka,\ga_0)\bigr) 
\in\cL\bigl(W_{\coW\ka}^{2/\mf2}, 
\,W_{\coW\ka}^{0/\mf2} 
\oplus[\pl W_{\coW\ka}\oplus\ga_0W_{\coW\ka}]_{\cB_\ka}\bigr) 
\qquad\text{$\gK_\Ga$-unif.} 
\npbd 
$$ 
The uniform LS~condition implies now the remaining assertions. For this we 
refer to~\cite{AmaVolIII21a}. 
\end{proof} 
Nonhomogeneous linear parabolic boundary value problems (of arbitrary order 
and in a Banach-space-valued setting) on Euclidean domains have been studied 
in~\cite{DHP07a}. It follows, in particular from Proposition~6.4 therein, 
that the isomorphism assertion is true for each 
\hb{\ka\in\gK_\Ga}. However, it is not obvious whether the 
\hbox{$\gK_\Ga$-uniformity} statement does also follow from the results 
in~\cite{DHP07a}. For this one would have to check carefully the dependence 
of all relevant estimates on the various parameters involved, 
which would be no easy task. (The same observation applies to 
Proposition~\ref{pro-MR.1}.) In~\cite{AmaVolIII21a} we present an 
alternative proof which takes care of the needed uniform estimates. 

\par 
Lastly, we assume that 
\hb{\ka\in\gK_S}. We set, once more suppressing a fixed 
\hb{t\in J},  
$$ 
a_\ka^1(x):=a_\ka(x) 
\qb a_\ka^2(x):=a_\ka(-x) 
\qa x\in\BH^m, 
$$ 
and 
$$ 
\gsa_\ka:=\diag[a_\ka^1,a_\ka^2]\sco\BH^m\ra\BC^{2\times 2}. 
$$ 
Then 
$$ 
\gA_\ka\gsu:=-\tdiv_\ka(\gsa_\ka\grad_\ka\gsu) 
\qa \gsu=(u^1,u^2). 
$$ 
Furthermore, 
\hb{\gB_\ka=(\gB_\ka^0,\gB_\ka^1)}, where 
\begin{equation}\label{MR.B2} 
\bal 
\gB_\ka^0\gsu 
&:=\ga u^1-\ga u^2,\cr 
\gB_\ka^1\gsu 
&:=\bigl(\nu_\ka\bsn\ga(a_\ka^1\grad_\ka u^1+a_\ka^2\grad_\ka u^2) 
 \bigr)_{g_\ka} 
\eal 
\npbd 
\end{equation}  
on~$\pl\BH^m$.\po 

\par 
Clearly, 
$$ 
\sa\bigl(\gsa(x)\bigr)\is[\Re z\geq\ul{\al}] 
\qa x\in\BH^m. 
\npbd 
$$ 
Thus $\gA_\ka$~is uniformly normally elliptic on~$\BH^m$, 
\hbox{$\gK_S$-unif.}\po  

\par 
We define~$\rho_\ka^i$, 
\,\hb{1=1,2}, by replacing~$a_\ka$ in \eqref{MR.rh} by~$a_\ka^i$ and 
introduce~$v_\ka^i$ by changing~$\rho_\ka$ in\eqref{MR.v} to~$\rho_\ka^i$. 
Then 
$$ 
\BC v_\ka^1\oplus\BC v_\ka^2 
$$ 
is the subspace of $C_0(\BR_+,\BC^2)$ of decaying solutions of 
$$ 
\bigl(\lda+\gss\gA_\ka(x,\,\xi+\imi\nu_\ka(x)\pl)\bigr)\gsv=0 
\qa x\in\pl\BH^m 
\qb \gsv=(v^1,v^2). 
$$ 
From \eqref{MR.B0} and \eqref{MR.B2} we see that the initial conditions 
in~\eqref{E.LS} are in the present case 
$$ 
\bal 
v^1(0)-v^2(0) 
&=\eta^1,\cr 
a_\ka^1(x)\rho_\ka^1(x)
v^1(0)+a_\ka^2(x)\rho_\ka^2(x)v^2(0) 
&=\imi\eta^2. 
\eal 
$$ 
Omitting 
\hb{x\in\pl\BH^m}, the solution of this system is 
$$ 
\bal 
v_\ka^1(0) 
&=v_\ka^2(0)+\eta^1,\cr 
v_\ka^2(0) 
&=\frac1{a_\ka^1\rho_\ka^1+a_\ka^2\rho_\ka^2} 
 (\imi\eta^2-a_\ka^1\rho_\ka^1\eta^1). 
\eal 
$$ 
From this, the uniform boundedness of~$a_\ka$, and 
\hb{\Re(a_\ka^1\rho_\ka^1+a_\ka^2\rho_\ka^2)\geq1/\ul{\al}\ba} 
it follows that $(\gA_\ka,\gB_\ka)$ satisfies the uniform 
parameter-dependent LS condition, unif.\ w.r.t.\ 
\hb{\ka\in\gK_S} and 
\hb{t\in J}.
%=========================================================
\begin{proposition}\label{pro-MR.3} 
It holds 
$$ 
\bigl(\pl_t+\cA_\ka,\ (\cC_\ka,\ga_0)\bigr) 
\in\Lis\bigl(\bar W_{\coW\ka}^{2/\mf2}, 
\,\bar W_{\coW\ka}^{0/\mf2} 
\oplus[\pl_S\bar W_{\coW\ka}\oplus\ga_0\bar W_{\coW\ka}]_{\cC_\ka}\bigr) 
\npbd 
$$ 
unif.\ w.r.t.\ 
\hb{\ka\in\gK_S} and 
\hb{t\in J}. 
\end{proposition}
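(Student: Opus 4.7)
The plan is to reduce the scalar transmission problem on $\BR^m\ssm\pl\BH^m$ to a $\BC^2$-valued boundary value problem on $\BH^m$ by reflection and then invoke the same half-space maximal regularity machinery used in the proof of Proposition~\ref{pro-MR.2}. All the hard analytic work has already been done in the paragraphs preceding the statement, namely the verification that $\gA_\ka$ is uniformly normally elliptic on $\BH^m$ and that $(\gA_\ka,\gB_\ka)$ satisfies the uniform parameter-dependent Lopatinskii--Shapiro condition, $\gK_S$-unif.\ and $t$-unif.

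First I would define the doubling map $T\sco u\mt\gsu=(u^1,u^2)$ with $u^1(x,t):=u(x,t)$ and $u^2(x,t):=u(-x,t)$ for $x\in\BH^m$. By Remark~\ref{rem-F.H} combined with the $\gK_S$-uniform equivalences~\eqref{C.WW}, $T$ is a $\gK_S$-uniform Banach space isomorphism
$$\bar W_{\coW\ka}^{s/\mf2}\ra W_{\coW p}^{s/\mf2}(\BH^m\times J,\BC^2),\qquad s\in\{0,2\},$$
and by the interpolation identity~\eqref{F.Ws} also between $\ga_0\bar W_{\coW\ka}$, $\pl_S\bar W_{\coW\ka}$ and the corresponding vector-valued trace spaces on $\BH^m$. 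Choosing the $S$-adapted chart so that $g_\ka$ has the tubular form $dx_1^2+g_\ka^\fdot$ in a neighborhood of $\pl\BH^m$ (which is available since $S$ has a uniform tubular neighborhood by Theorem~\ref{thm-H.S}(iii)), the reflection is a local isometry there, and a direct chain rule computation yields
$$T(\pl_t+\cA_\ka)T^{-1}=\pl_t+\gA_\ka,\qquad T\cC_\ka T^{-1}=\gB_\ka.$$
Indeed, $\cC_\ka^0u=\eea{u}$ becomes $u^1(0)-u^2(0)=\gB_\ka^0\gsu$, while the flux jump $\cC_\ka^1u$, after the sign reversal produced by reflection of $\pl/\pl x^1$, becomes $(\nu_\ka\sn a_\ka^1\grad_\ka u^1+a_\ka^2\grad_\ka u^2)_{g_\ka}=\gB_\ka^1\gsu$, matching~\eqref{MR.B2}. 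With the uniform ellipticity and uniform LS already in hand, the argument from the proof of Proposition~\ref{pro-MR.2} transcribes verbatim to the $\BC^2$-valued system and produces the $\gK_S$-uniform, $t$-uniform isomorphism for $(\pl_t+\gA_\ka,(\gB_\ka,\ga_0))$ between $W_{\coW p}^{2/\mf2}(\BH^m\times J,\BC^2)$ and the compatibility-constrained source space, by the combined results of~\cite{AHS94a}, \cite{Ama95a}, \cite{Ama04a}, or the unified treatment in~\cite{AmaVolIII21a}. Pulling back through $T^{-1}$ and using the $\gK_S$-uniform retraction properties~\eqref{C.Rc} and~\eqref{C.0} then gives the claim.

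The main obstacle is the bookkeeping in the first step: one must check that $T$ carries the compatibility-constrained subspace $[\pl_S\bar W_{\coW\ka}\oplus\ga_0\bar W_{\coW\ka}]_{\cC_\ka}$ $\gK_S$-uniformly onto the corresponding constrained subspace for the vector-valued system. This is straightforward once one observes that $T$ commutes with real interpolation and with the traces $\ga,\ga_0$, and that the orders $2-1/p$ and $1-1/p$ of the two components of $\cC_\ka$ match those of $\gB_\ka$ by construction; the excluded exponents $p\in\{3/2,3\}$ in~\eqref{L.ass} correspond exactly to the orders at which the second compatibility condition would become critical. Aside from this verification and the need to select the $S$-adapted charts so that the reflection is an isometry of $g_\ka$ near $\pl\BH^m$, the proof is a direct transcription of that of Proposition~\ref{pro-MR.2} with the reflection trick replacing the absence of a genuine boundary in the transmission setting.
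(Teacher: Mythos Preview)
Your proposal is correct and takes essentially the same approach as the paper: reduce via the doubling map $u\mapsto\gsu(x)=(u(x),u(-x))$ to a $\BC^2$-valued boundary value problem on $\BH^m$, and then apply the proof of Proposition~\ref{pro-MR.2} verbatim to the system $(\gA_\ka,\gB_\ka)$ using the uniform ellipticity and uniform LS condition already verified in the preceding paragraphs. The paper's proof is terser---it simply asserts the equivalence and invokes Proposition~\ref{pro-MR.2}---whereas you spell out the chart/isometry considerations and the compatibility-subspace bookkeeping that the paper leaves implicit.
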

%---------------------------------------------------------
\begin{proof} 
Set 
\hb{\gsu(x):=\bigl(u(x),u(-x)\bigr)} for 
\hb{x\in\BH^m} and 
\hb{\bar\gW_\ka^s:=\bar W_\ka^s\oplus\bar W_\ka^s} etc. Then the assertion 
is true iff 
$$ 
\bigl(\pl_t+\gA_\ka,\ (\gB_\ka,\ga_0)\bigr) 
\in\Lis\bigl(\bar\gW_\ka^{2/\mf2}, 
\,\bar\gW_\ka^{0/\mf2} 
\oplus[\pl_S\bar\gW_\ka\oplus\ga_0\bar\gW_\ka]_{\gB_\ka}\bigr) 
\qquad\text{$\gK_S$-unif.} 
$$ 
By the preceding considerations, the proof of Proposition~\ref{pro-MR.2} 
applies verbatim to the system for~$\gsu$. This proves the claim. 
\end{proof} 
%=========================================================
%=========================================================
\section{Localizations}\label{sec-LT} 
We presuppose \eqref{L.ass} and fix an \hbox{$S$-adapted} atlas for~$M$ 
with 
\hb{\diam(U_{\coU\ka})<\ve/2} for 
\hb{\ka\in\gK_S}. Then 
$$ 
\gN(\ka):=\{\,\wk\in\gK\ ;\ U_{\coU\ka}\cap U_\wk\neq\es\,\} 
\qa \ka\in\gK, 
$$ 
\hb{\gN_\Ga(\ka):=\gN(\ka)\cap\gK_\Ga}, 
and \hb{\gN_S(\ka):=\gN(\ka)\cap\gK_S}. By the finite multiplicity of~$\gK$, 
\begin{equation}\label{LT.k} 
\card\bigl(\gN(\ka)\bigr)\leq c 
\qa \ka\in\gK. 
\end{equation}  
We set for 
\hb{\ka\in\gK} and 
\hb{\wk\in\gN(\ka)} 
$$ 
S_\kwk u:=\ka_*\wk^*u=u\circ(\wk\circ\ka^{-1}) 
\qa u\in\sW_\wk^{0/\mf2}. 
$$ 
It follows from \eqref{U.K}(ii) that, given 
\hb{s\in[0,2]}, 
\begin{equation}\label{LT.S} 
S_\kwk\in\cL(\sW_\wk^{s/\mf2},\sW_{\coW\ka}^{s/\mf2}) 
\qquad\text{$\gK$-unif.}
\end{equation}  

\par 
Interpreting~$\gK$ as an index set, we put 
\begin{equation}\label{LT.Wp} 
\mfsW^{s/\mf2}:=\prod_{\ka\in\gK}\sW_{\coW\ka}^{s/\mf2}, 
\end{equation}  
endowed with the product topology. For 
\hb{\al\in\{0,\Ga\}} we set 
$$ 
\mfW^{s/\mf2}[\al]:=\prod_{\ka\in\gK_\al}W_{\coW\ka}^{s/\mf2} 
\qb \bar\mfW^{s/\mf2}:=\prod_{\ka\in\gK_S}\bar W_{\coW\ka}^{s/\mf2}. 
$$ 
Since $\gK$~is the disjoint union of $\gK_0$, $\gK_\Ga$, and~$\gK_S$, 
\begin{equation}\label{P.Ws} 
\mfsW^{s/\mf2} 
=\mfW^{s/\mf2}[0]\oplus\mfW^{s/\mf2}[\Ga]\oplus\bar\mfW^{s/\mf2}. 
\end{equation}  
A similar definition and direct sum decomposition applies 
to $\ga_0\mfsW$. We also set 
$$ 
\pl\mfW:=\prod_{\ka\in\gK_\Ga}\pl W_{\coW\ka} 
\qb \pl_S\mfW:=\prod_{\ka\in\gK_S}\pl_S\bar W_{\coW\ka}. 
$$ 

\par 
We define linear operators $\cR$ 
and~$\cR^c$ by 
\begin{equation}\label{LT.RR} 
\cR\mfu:=\sum_\ka\pi_\ka\ka^*u_\ka 
\qb \cR^cu:=\bigl(\ka_*(\pi_\ka u)\bigr)_{\ka\in\gK} 
\end{equation}  
for 
\hb{\mfu=(u_\ka)\in\mfsW^{0/\mf2}} and 
\hb{u\in L_1\bigl(J,L_{1,\loc}(M\ssm S)\bigr)}, respectively. 
The sum is locally finite 
and $\pi_\ka$~is identified with the multiplication operator 
\hb{v\mt\pi_\ka v}. 

\par 
We want to evaluate 
\hb{\cA\circ\cR\mfu} for 
\hb{\mfu\in\mfsW^{2/\mf2}}. Observe  
$$ 
\cA(\pi_\ka u)=\pi_\ka\cA u+[\cA,\pi_\ka]u 
\qa u\in W_{\coW p}^{2/\mf2}, 
$$ 
the commutator being given by 
\begin{equation}\label{LT.Ap} 
[\cA,\pi_\ka]u 
=-(\grad\pi_\ka\sn a\grad u)-\tdiv(au\grad\pi_\ka). 
\end{equation}  
Thus we get from \eqref{LT.RR} 
\begin{equation}\label{LT.AR} 
\cA\cR\mfu 
=\sum_\ka\cA(\pi_\ka\ka^*u_\ka) 
=\sum_\ka\pi_\ka\cA(\ka^*u_\ka)+\sum_\ka[\cA,\pi_\ka]\ka^*u_\ka.    
\end{equation}  
By 
\hb{\cA(\ka^*u_\ka)=\ka^*\cA_\ka u_\ka}, the first sum 
equals~$\cR\mfsA\mfu$, where 
\hb{\mfsA:=\diag[\cA_\ka]}. Using 
\hb{1=\sum_\wk\pi_\wk^2}, we find 
\begin{equation}\label{LT.AR2} 
\bal 
{}% do not remove! 
[\cA,\pi_\ka]\ka^*u_\ka 
&=\sum_\wk\pi_\wk\pi_\wk[\cA,\pi_\ka]\ka^*u_\ka\cr 
&=\sum_\wk\pi_\wk\wk^* 
 \bigl((\wk_*\pi_\wk)\wk_*[\cA,\pi_\ka]\wk^* 
 (\wk_*\ka^*)u_\ka\bigr)\cr 
&=\sum_{\wk\in\gN(\ka)}\pi_\wk\wk^* 
 \bigl((\wk_*\pi_\wk)[\cA_\wk,S_\wkk(\ka_*\pi_\ka)]S_\wkk u_\ka\bigr).
\eal 
\end{equation}  
Set 
$$ 
\cA_\kwk:=(\ka_*\pi_\ka) 
\bigl[\cA_\ka,S_\kwk(\wk_*\pi_\wk)\bigr]S_\kwk\chi. 
$$ 
Then \eqref{LT.S}, \eqref{LT.Ap}, \eqref{M.LS}, \eqref{C.ak}, and 
\hb{\ka_*\pi_\ka=(\ka_*\pi_\ka)\chi} 
imply 
\begin{equation}\label{LT.Akk} 
\|\cA_\kwk v\|_{\sW_{\coW\ka}^{0/\mf2}} 
\leq c\,\|\chi v\|_{\sW_{\coW\wk}^{1/\mf2}} 
\qa \wk\in\gN(\ka) 
\qb \ka\in\gK. 
\end{equation}  
We define 
\hb{\mfsA_\ka^0\sco\mfsW^{1/\mf2}\ra\sW_{\coW\ka}^{0/\mf2}} by 
$$ 
\mfsA_\ka^0\mfu:=\sum_{\wk\in\gN(\ka)}\cA_\kwk u_\wk 
\qa \mfu\in\mfsW^{1/\mf2} 
\qb \ka\in\gK. 
$$ 
Then we deduce from \eqref{LT.k} and \eqref{LT.Akk} that 
\begin{equation}\label{LT.Ak} 
\|\mfsA_\ka^0\mfu\|_{\sW_{\coW\ka}^{0/\mf2}} 
\leq c\sum_{\wk\in\gN(\ka)}\|\chi u_\wk\|_{\mfsW_\wk^{1/\mf2}} 
\qa u\in\mfsW^{1/\mf2} 
\qb \ka\in\gK. 
\npbd 
\end{equation}  
Moreover, 
\hb{\mfsA^0:=(\mfsA_\ka^0)_{\ka\in\gK}}.\po  

\par 
Now we sum \eqref{LT.AR2} over 
\hb{\ka\in\gK} and interchange the order of summation to find 
that the second sum in \eqref{LT.AR} equals~$\cR\mfsA^0\mfu$. 
Thus, in total,  
\begin{equation}\label{LT.RAA} 
\cA\cR=\cR(\mfsA+\mfsA^0).  
\end{equation}  

\par 
Similar considerations lead to 
\begin{equation}\label{LT.AAR} 
\cR^c\cA=(\mfsA+\wt{\mfsA}\vph{\sA}^0)\cR^c. 
\end{equation}  
Here 
\hb{\wt{\mfsA}\vph{\sA}^0=(\wt{\mfsA}\vph{\sA}_\ka^0)_{\ka\in\gK}} with 
\hb{\wt{\mfsA}\vph{\sA}_\ka^0\in\cL(\mfsW^{1/\mf2},\sW_{\coW\ka}^{0/\mf2})} 
satisfying  
\begin{equation}\label{LT.At} 
\|\wt{\mfsA}\vph{\sA}_\ka^0\mfu\|_{\sW_{\coW\ka}^{0/\mf2}} 
\leq c\sum_{\wk\in\gN(\ka)}\|\chi u_\wk\|_{\mfsW_\wk^{1/\mf2}} 
\qa \mfu\in\mfsW^{1/\mf2} 
\qb \ka\in\gK. 
\end{equation}  

\par 
We turn to~$\Ga$ and define 
\hb{\ka^\fdot:=\ga\ka} for 
\hb{\ka\in\gK_\Ga}. Then 
\begin{equation}\label{LT.Rga} 
\cR_\Ga\mfu:=\sum_{\ka\in\gK_\Ga}\ga\pi_\ka(\ka^\fdot)^*u_\ka 
\qa \mfu=(u_\ka)\in\pl\mfW, 
\end{equation}  
and 
\begin{equation}\label{LT.Rgc} 
\cR_\Ga^cu:=\bigl(\ka_*^\fdot\ga(\pi_\ka u)\bigr)_{\ka\in\gK_\Ga} 
\qa u\in W_{\coW p}^{2/\mf2}\bigl((M\ssm S)\times J\bigr). 
\end{equation}  

\par 
Observe that 
$$ 
\cB(\pi_\ka u)=(\ga\pi_\ka)\cB u+[\cB,\pi_\ka]u 
$$ 
and 
$$ 
[\cB,\pi_\ka]u=\da\bigl(\nu\bsn\ga(a\grad\pi_\ka)\bigr)_{g^\ifdot}\ga u  
\qa u\in W_{\coW p}^{2/\mf2}\bigl((M\ssm S)\times J\bigr). 
$$ 
Similarly as above, we find 
\begin{equation}\label{LT.RBB} 
\cB\cR=\cR_\Ga(\mfB+\mfB^0), 
\end{equation}  
where 
\hb{\mfB:=\diag[\cB_\ka]} and 
\hb{\mfB^0=(\mfB_\ka^0)_{\ka\in\gK_\Ga}} with 
\hb{\mfB_\ka^0\sco\mfW^{1/\mf2}[\Ga]\ra\{0\}\oplus\pl_1W_{\coW\ka}} 
satisfying 
\begin{equation}\label{LT.Bk} 
\|\mfB_\ka^0\mfu\|_{\pl_1W_{\coW\ka}} 
\leq c\sum_{\wk\in\gN_\Ga(\ka)}\|\chi u_\wk\|_{W_\wk^{1/\mf2}} 
\qa \ka\in\gK_\Ga. 
\end{equation}  
Analogously, 
\begin{equation}\label{LT.BBR} 
\cR_\Ga^c\cB=(\mfB+\wt{\mfB}\vph{\mfB}^0)\cR^c, 
\end{equation}  
where 
\hb{\wt{\mfB}\vph{\mfB}^0=(\wt{\mfB}\vph{\mfB}_\ka^0)} with 
\hb{\wt{\mfB}\vph{\mfB}_\ka^0 
   \sco\mfW^{1/\mf2}[\Ga]\ra\{0\}\oplus\pl_1W_{\coW\ka}} is such that 
\begin{equation}\label{LT.Bk0} 
\|\wt{\mfB}\vph{\mfB}_\ka^0\mfu\|_{\pl_1W_{\coW\ka}} 
\leq c\sum_{\wk\in\gN_\Ga(\ka)}\|\chi u_\wk\|_{W_\wk^{1/\mf2}} 
\qa \ka\in\gK_\Ga. 
\end{equation}  
Concerning the transmission interface~$S$, we define 
$\cR_S$ and~$\cR_S^c$ analogously to \eqref{LT.Rga} and \eqref{LT.Rgc}. 
Observe that 
$$ 
[\cC,\pi_\ka]u 
=\bigl(0,\eea{(\nu_S\sn a\grad\pi_\ka)u}\bigr). 
$$ 
From this it is now clear that 
\begin{equation}\label{LT.CCR} 
\cC\cR=\cR_S(\mfC+\mfC^0) 
\qb \cR_S^c\cC=(\mfC+\wt{\mfC}\vph{\mfC}^0)\cR^c, 
\end{equation}  
where 
\hb{\mfC:=\diag[\cC_\ka]}, 
\ \hb{\mfC^0=(\mfC_\ka^0)}, and 
\hb{\wt{\mfC}\vph{\mfC}^0=(\wt{\mfC}\vph{\mfC}_\ka^0)} with 
\begin{equation}\label{LT.Ck} 
\bal 
\|C_\ka^0\mfu\|_{\pl_1W_{\coW\ka}} 
&\leq c\sum_{\wk\in\gN_S(\ka)}\|\chi u_\wk\|_{\bar W_\wk^{1/\mf2}},\cr 
\|\wt{C}\vph{C}_\ka^0\mfu\|_{\pl_1W_{\coW\ka}} 
&\leq c\sum_{\wk\in\gN_S(\ka)}\|\chi u_\wk\|_{\bar W_\wk^{1/\mf2}} 
\eal 
\npbd 
\end{equation}  
for 
\hb{\ka\in\gK_S} and 
\hb{\mfu\in\bar\mfW^{2/\mf2}}.\po  

\par 
The following consequences of the preceding results are needed 
to establish Theorem~\ref{thm-L.MR}.
%=========================================================
\begin{lemma}\label{lem-LT.ch} 
Fix 
\hb{s\in(1,2)} and 
\hb{q>p} with 
\hb{1/p-1/q<(2-s)/(m+2)}.\break 
Then 
$$ 
(v\mt\chi v) 
\in\cL\bigl(\sW_{\coW\ka}^{2/\mf2},L_q(J,\sW_{\coW\ka}^s) 
\cap W_{\coW q}^{s/2}(J,\sW_{\coW\ka}^0)\bigr) 
\qquad\text{$\gK$-unif.} 
$$ 
\end{lemma}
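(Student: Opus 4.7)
The plan is to factor the map $v\mt\chi v$ through an anisotropic parabolic Sobolev embedding, using crucially that $\chi$ has fixed compact support in $(-1,1)^m$, independent of~$\ka$. First, by \eqref{C.W} and \eqref{C.WW} together with Remark~\ref{rem-F.H} (which decomposes the barred spaces for $\ka\in\gK_S$ as direct sums of ordinary Sobolev spaces on $\pm\BH^m$), it suffices to prove the analogous embedding in the standard Euclidean setting with $\BX\in\{\BR^m,\BH^m\}$ and flat metric~$g_m$; the transfer back to $\sW_{\coW\ka}^{s/\mf2}$ will preserve $\gK$-uniformity of the constants.

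In the Euclidean setting, pointwise multiplication by~$\chi$ is a bounded endomorphism of $W_{\coW p}^{2/\mf2}(\BX\times J)$ with operator norm controlled by $\|\chi\|_{C^2}$; this follows from the Leibniz rule applied in both factors of the intersection $L_p(J,W_{\coW p}^2)\cap W_{\coW p}^1(J,L_p)$. More importantly, $\chi v$ has spatial support in the fixed compact set $K:=\supp(\chi)\is(-1,1)^m$, uniformly in~$\ka$ and~$v$.

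The core ingredient is the parabolic Sobolev embedding
$$
W_{\coW p}^{2/\mf2}(\BX\times J)\hr W_{\coW q}^{s/\mf2}(\BX\times J),
$$
valid under the subcritical scaling $1/p-1/q\leq(2-s)/(m+2)$, reflecting that time counts as two spatial dimensions; this can be read off the Besov-space embeddings via Theorems VII.2.7.4 and~VII.2.8.3 in~\cite{Ama19a}. Because $\chi v$ has spatial support in~$K$, the bounded-domain inclusions $W_{\coW q}^s(K)\hr W_{\coW p}^s(K)$ and $L_q(K)\hr L_p(K)$ allow the transition to the weaker target norm. For $u=\chi v$ one then assembles
$$
\bal
\|u\|_{L_q(J,W_{\coW p}^s(\BX))\cap W_{\coW q}^{s/2}(J,L_p(\BX))}
&\leq c(K)\|u\|_{W_{\coW q}^{s/\mf2}(\BX\times J)}\\
&\leq c\|u\|_{W_{\coW p}^{2/\mf2}(\BX\times J)}
\leq c\|v\|_{W_{\coW p}^{2/\mf2}(\BX\times J)},
\eal
$$
with constants independent of~$\ka$.

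The main obstacle is confirming the $\gK$-uniformity of all constants. This reduces to two observations, both already available: the parabolic Sobolev embedding on the model domain $\BX\in\{\BR^m,\BH^m\}$ with metric~$g_m$ has a constant depending only on $m,p,q,s$ and $|J|$, and the norm equivalences \eqref{C.W} and \eqref{C.WW} transfer the Euclidean bound back to $\sW_{\coW\ka}^{s/\mf2}$ with $\ka$-independent constants, as established in Section~\ref{sec-C}. Strictness of the scaling inequality gives room to spare and is not needed in a sharp form here.
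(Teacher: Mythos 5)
Your proposal is correct and follows essentially the same route as the paper: localize to $\BX\in\{\BR^m,\BH^m\}$ via \eqref{C.W}, \eqref{C.WW}, and Remark~\ref{rem-F.H}, apply the anisotropic parabolic Sobolev embedding $W_{\coW p}^{2/\mf2}\hr W_{\coW q}^{s/\mf2}$ (the paper factors it through $W_{\coW p}^{s_1/\mf2}$ with $s_1:=s+(m+2)(1/p-1/q)<2$, which is where the strict inequality enters), and then use H\"older together with the fixed compact spatial support of $\chi$ to pass from $q$- to $p$-based spatial norms. The only cosmetic difference is that you make explicit the bounded endomorphism $v\mt\chi v$ on $W_{\coW\ka}^{2/\mf2}$, which the paper uses implicitly.
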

%---------------------------------------------------------
\begin{proof} 
(1) 
Set 
\hb{\BX:=\BR^m} and 
\hb{s_1:=s+(m+2)(1/p-1/q)<2}. By (VII.3.6.3) and Example~VII.3.6.5 
of~\cite{Ama19a} 
\begin{equation}\label{LT.Ws2} 
W_{\coW q}^{s/\mf2} 
\doteq L_q(J,W_{\coW q}^s)\cap W_{\coW q}^{s/2}(J,L_q), 
\end{equation}  
where 
\hb{W_{\coW q}^{s/\mf2}=W_{\coW q}^{s/\mf2}(\BX\times J)} etc. Hence, see 
\cite[Theorem VII.2.2.4(iv)]{Ama19a}, 
$$ 
W_{\coW p}^{2/\mf2}\hr W_{\coW p}^{s_1/\mf2} 
\hr L_q(J,W_{\coW q}^s)\cap W_{\coW q}^{s/2}(J,L_q).
$$ 
Consequently, invoking \eqref{C.W}, 
$$ 
W_{\coW\ka}^{2/\mf2} 
\hr L_q(J,W_{\coW q,\ka}^s)\cap W_{\coW q}^{s/2}(J,L_{q,\ka})
\qquad\text{$\gK_0$-unif.} 
$$ 

\par 
(2) 
Since 
\hb{\supp(\chi)\is(-1,1)^m}, H\"older's inequality implies 
$$ 
\|\chi v\|_{W_{\coW p}^k}\leq c\,\|\chi v\|_{W_{\coW q}^k} 
\qa v\in W_{\coW q}^2 
\qb k=0,1,2. 
$$ 
Hence, by interpolating and then using \eqref{C.W} once more,  
$$ 
\|\chi v\|_{W_{\coW\ka}^\sa}\leq c\,\|\chi v\|_{W_{\coW q,\ka}^\sa} 
\qquad\text{$\gK_0$-unif.} 
\qb \sa\in\{0,s\}. 
$$ 
From this we get 
$$ 
\|\chi u\|_{L_q(J,W_{\coW\ka}^s)} 
\leq c\,\|\chi u\|_{L_q(J,W_{\coW q,\ka}^s)} 
\qquad\text{$\gK_0$-unif.} 
$$ 
and 
$$ 
\|\chi u\|_{W_{\coW q}^{s/2}(J,W_{\coW\ka}^0)} 
\leq c\,\|\chi u\|_{W_{\coW q}^{s/2}(J,W_{\coW q,\ka}^0)} 
\qquad\text{$\gK_0$-unif.} 
\npbd 
$$ 
Now the assertion follows in this case from step~(1). 

\par 
(3) 
The preceding arguments also hold if we replace 
\hb{\BX=\BR^m} by 
\hb{\BX=\BH^m}. Then (see Remark~\ref{rem-F.H}) it also applies to  
\hb{\BX=\BR^m\ssm\pl\BH^m}. Thus the lemma is proved. 
\end{proof} 
Given a function space~$\gF$ defined on~$J$, we write~$\gF(\tau)$ for its 
restriction to~$J_\tau$, 
\ \hb{0<\tau\leq T}. 
%=========================================================
\begin{lemma}\label{lem-LT.A0} 
Let 
\hb{\wh{\mfsA}_\ka\in\{\mfsA_\ka^0,\wt{\mfsA}_\ka\}}. There exists  
\hb{\ve>0} such that 
$$ 
\|\wh{\mfsA}_\ka\mfu\|_{\sW_{\coW\ka}^{0/\mf2}(\tau)} 
\leq c\coc\tau^\ve\sum_{\wk\in\gN(\ka)}\|u_\wk\|_{\sW_\ka^{2/\mf2}(\tau)} 
\qa \mfu\in\mfsW^{2/\mf2}, 
\npbd 
$$ 
unif.\ w.r.t.\ 
\hb{\ka\in\gK} and 
\hb{0<\tau\leq T}. 
\end{lemma}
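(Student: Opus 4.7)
The plan is to upgrade the crude bounds \eqref{LT.Ak} and \eqref{LT.At} by trading a little spatial regularity for a gain in time integrability, and then using H\"older's inequality on the short interval $J_\tau$ to extract a positive power of $\tau$. The key mechanism: for $q>p$ and any scalar $f$, $\|f\|_{L_p(J_\tau)}\leq\tau^{1/p-1/q}\|f\|_{L_q(J_\tau)}$, and this is what will produce $\tau^\ve$.

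First I would observe that, because the commutators in \eqref{LT.Ap} (and the analogous ones entering $\wt{\mfsA}^0$) are purely \emph{spatial} first-order operators with coefficients pointwise controlled in $t$ by $\|a(t)\|_{\bar{BC}^1}$ (uniformly in $t$ via \eqref{C.ak} and in $\ka$ via \eqref{M.a} and \eqref{U.g}), the proof of \eqref{LT.Akk} actually delivers the sharper pointwise-in-time estimate
$$\|\wh{\mfsA}_\ka\mfu(t)\|_{L_p(\BX_\ka)}
\leq c\sum_{\wk\in\gN(\ka)}\|\chi u_\wk(t)\|_{W_{\coW p}^1(\BX_\wk)}
\qquad\text{a.e.\ }t\in J,$$
$\gK$-uniformly. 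Next I would fix $s\in(1,2)$ and pick $q>p$ so close to $p$ that $1/p-1/q<(2-s)/(m+2)$, so that Lemma~\ref{lem-LT.ch} applies. Since $\chi$ has compact support in the bounded cube $(-1,1)^m$, the classical Sobolev embedding on bounded domains together with the $\gK$-uniform equivalence \eqref{C.W} gives $\|\chi w\|_{W_{\coW p}^1}\leq c\,\|\chi w\|_{W_{\coW q}^s}$ $\gK$-uniformly (using $s>1$, $q>p$, and $\supp\chi\subset(-1,1)^m$).

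Assembling these ingredients, taking $L_p$ norms in $t$ of both sides of the pointwise bound, applying H\"older in time on $J_\tau$, and finally invoking Lemma~\ref{lem-LT.ch}, I obtain
$$\|\wh{\mfsA}_\ka\mfu\|_{\sW_{\coW\ka}^{0/\mf2}(\tau)}
\leq c\sum_\wk\|\chi u_\wk\|_{L_p(J_\tau,W_{\coW p}^1)}
\leq c\tau^{1/p-1/q}\sum_\wk\|\chi u_\wk\|_{L_q(J_\tau,W_{\coW q}^s)}
\leq c\tau^{1/p-1/q}\sum_\wk\|u_\wk\|_{\sW_\wk^{2/\mf2}(\tau)},$$
with all constants independent of $\ka\in\gK$ and $\tau\in(0,T]$. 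Setting $\ve:=1/p-1/q>0$ finishes the proof for $\mfsA_\ka^0$; the case $\wt{\mfsA}_\ka^0$ is identical since \eqref{LT.At} has exactly the structure of \eqref{LT.Ak}.

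The main delicate point will be tracking $\gK$-uniformity \emph{simultaneously} through the pointwise-in-time commutator bound, the Sobolev embedding on the fixed support of $\chi$ (which relies on the uniform equivalence $g_\ka\sim g_m$ on that support, via \eqref{C.W}--\eqref{C.B}), and the embedding in Lemma~\ref{lem-LT.ch}. Each ingredient is already established uniformly in $\ka$, so assembling them is routine, but one must be careful to defer the $t$-integration until after the H\"older step so that the $\tau^{1/p-1/q}$ factor actually appears. A subtler point is avoiding the $W_p^{1/2}(J,L_p)$ component of the $W^{1/\mf2}$ norm in \eqref{LT.Ak}, which would be awkward to dominate by a $\tau^\ve$-times-the-full-$W^{2/\mf2}$-norm bound; using the sharper pointwise-in-time commutator estimate, as above, bypasses this difficulty entirely.
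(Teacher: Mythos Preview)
Your proposal is correct and follows essentially the same route as the paper: H\"older in time to produce $\tau^{1/p-1/q}$, the pointwise-in-time first-order nature of the commutator, and Lemma~\ref{lem-LT.ch} to close. The only cosmetic difference is the order of operations---the paper applies H\"older first (passing from $L_p(J_\tau,\sW_\ka^0)$ to $L_q(J_\tau,\sW_\ka^0)$) and then the commutator bound, while you apply the commutator bound first and then H\"older; your explicit use of the pointwise-in-time estimate is precisely what the paper uses tacitly when it invokes \eqref{LT.Ak}/\eqref{LT.At} in $L_q$-time rather than $L_p$-time.
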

%---------------------------------------------------------
\begin{proof} 
Fix $s$ and~$q$ as in Lemma~\ref{lem-LT.ch} and set 
\hb{\ve:=1/p-1/q}. We get from H\"older's inequality, Lemma~\ref{lem-LT.ch}, 
\eqref{LT.Ak}, and \eqref{LT.At} 
$$ 
\bal 
\|\wh{\mfsA}_\ka\mfu\|_{\sW_{\coW\ka}^{0/\mf2}(\tau)} 
&=\|\wh{\mfsA}_\ka\mfu\|_{L_p(J_\tau,\sW_{\coW\ka}^0)} 
 \leq\tau^\ve\,\|\wh{\mfsA}_\ka\mfu\|_{L_q(J_\tau,\sW_{\coW\ka}^0)}\cr 
&\leq c\coc\tau^\ve\sum_{\wk\in\gN(\ka)} 
 \|\chi u_\wk\|_{L_q\cap\sW_\wk^{s/2}(\tau)} 
 \leq c\coc\tau^\ve\sum_{\wk\in\gN(\ka)}\|u_\wk\|_{\sW_\ka^{2/\mf2}(\tau)} 
\eal 
$$ 
for 
\hb{0<\tau\leq T}, \hbox{$\gK$-unif.}, where 
\hb{\Vsdot_{L_q\cap\sW_{\coW\ka}^{s/2}}} is the norm in the image space 
occurring in Lemma~\ref{lem-LT.ch}. 
\end{proof} 
The next lemma provides analogous estimates for the boundary and 
transmission operators. 
%=========================================================
\begin{lemma}\label{lem-LT.BC} 
Let 
\hb{\wh{\mfB}_\ka\in\{\mfB_\ka^0,\wt{\mfB}\vph{\mfB}_\ka^0\}} and 
\hb{\wh{\mfC}_\ka\in\{\mfC_\ka^0,\wt{\mfC}\vph{\mfC}_\ka^0\}}. There exists 
\hb{\ve>0} such that 
$$ 
\|\wh{\mfB}_\ka\mfu\|_{\pl W_{\coW\ka}(\tau)} 
\leq c\coc\tau^\ve\sum_{\wk\in\gN_\Ga(\ka)} 
\|u_\wk\|_{W_{\coW\wk}^{2/\mf2}(\tau)}
\qquad\text{$\gK_\Ga$-unif.}
\qb \mfu\in\mfW^{2/\mf2}[\Ga], 
$$ 
and 
$$ 
\|\wh{\mfC}_\ka\mfu\|_{\pl_SW_{\coW\ka}(\tau)} 
\leq c\coc\tau^\ve\sum_{\wk\in\gN_S(\ka)} 
\|u_\wk\|_{\bar W_{\coW\wk}^{2/\mf2}(\tau)}
\qquad\text{$\gK_S$-unif.} 
\qb \mfu\in\bar\mfW^{2/\mf2}, 
\npbd 
$$ 
for 
\hb{0<\tau\leq T}. 
\end{lemma}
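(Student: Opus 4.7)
The proof follows the pattern of Lemma~\ref{lem-LT.A0}. The plan is to upgrade the operator estimates \eqref{LT.Bk}, \eqref{LT.Bk0}, \eqref{LT.Ck} to their $L_q$-based analogues for some $q>p$ close to~$p$, and then use H\"older's inequality in time to produce the factor $\tau^\ve$.

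First, I would revisit the derivation of \eqref{LT.Bk}, \eqref{LT.Bk0}, \eqref{LT.Ck}. Each such inequality only invokes the boundary operator retraction \eqref{C.RB} (or \eqref{C.Rc}), the $\gK$-uniform smoothness of the coefficients of $\cB_\ka$ and $\cC_\ka$ provided by \eqref{C.ab} and \eqref{C.b1}, and the explicit form of the commutators $[\cB,\pi_\ka]$ and $[\cC,\pi_\ka]$ displayed before \eqref{LT.RBB} and \eqref{LT.CCR}. All of these ingredients admit exact $L_q$-analogues, \hbox{$\gK$-uniformly}, yielding
\begin{equation*}
\|\wh{\mfB}_\ka\mfu\|_{\pl W_{\coW q,\ka}(\tau)}
\leq c\sum_{\wk\in\gN_\Ga(\ka)}\|\chi u_\wk\|_{W_{\coW q,\wk}^{1/\mf2}(\tau)},
\qquad \ka\in\gK_\Ga,
\end{equation*}
and the analogous inequality for $\wh{\mfC}_\ka$, where the subscript $q$ indicates that $L_p$ has been replaced by $L_q$ in the definitions of the trace spaces.

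Second, I would pick $s\in(1,2)$ and $q>p$ so that Lemma~\ref{lem-LT.ch} applies. Combining Lemma~\ref{lem-LT.ch} with the anisotropic trace theorem (cf.~\cite[Example~VIII.1.8.6]{Ama19a}) provides $\|\chi u_\wk\|_{W_{\coW q,\wk}^{1/\mf2}(\tau)}\leq c\,\|u_\wk\|_{\sW_\wk^{2/\mf2}(\tau)}$ for every $\wk\in\gN_\Ga(\ka)$ (respectively $\gN_S(\ka)$), $\gK$-uniformly, since $s>1$ comfortably absorbs one trace order. Finally, because the functions on the right-hand side are spatially supported inside a fixed bounded piece of $\pl\BH^m$, H\"older's inequality applied separately to the two components of the Slobodeckii norm defining $\pl W_{\coW q,\ka}$ gives the embedding $\pl W_{\coW q,\ka}(\tau)\hr\pl W_{\ka}(\tau)$ with embedding constant bounded by $c\tau^\ve$ for some $\ve:=1/p-1/q>0$. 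Chaining the three displays yields the claimed estimate; the transmission case is treated identically with $\bar W$-spaces and \eqref{LT.Ck} throughout.

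The principal obstacle is this last step: the trace norm $\pl W_{\coW q,\ka}$ mixes spatial smoothness $1-1/q$ with a fractional time derivative of order $(1-1/q)/2$, so transferring the crude H\"older-in-time gain to the full anisotropic Slobodeckii scale requires commuting H\"older with the Gagliardo seminorm (or, equivalently, with real interpolation), all the while keeping constants \hbox{$\gK$-uniform}. The uniform bounds recorded in \eqref{U.K}, \eqref{M.g}, \eqref{C.ab}, and \eqref{C.b1} are exactly what ensures that the constants do not deteriorate over the infinite family $\gK_\Ga\cup\gK_S$.
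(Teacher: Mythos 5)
Your overall strategy matches the paper's: a commutator estimate for $\wh{\mfB}_\ka$ and $\wh{\mfC}_\ka$, a H\"older-in-time argument to generate the factor $\tau^\ve$, and Lemma~\ref{lem-LT.ch} to control $\chi u_\wk$. But the way you propose to extract the $\tau^\ve$ in the anisotropic Slobodeckii trace norm leaves a genuine gap, which the paper closes with \eqref{LT.Wpp}. You pass through the $q$-native trace space $\pl W_{\coW q,\ka}$, whose time component has smoothness order $(1-1/q)/2$, and then assert that ``H\"older's inequality applied separately to the two components'' yields $\pl W_{\coW q,\ka}(\tau)\hr\pl W_\ka(\tau)$ with constant $c\,\tau^\ve$. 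For the time-fractional component this means embedding $W_{\coW q}^{(1-1/q)/2}(J_\tau,\cdot)$ into $W_{\coW p}^{(1-1/p)/2}(J_\tau,\cdot)$ — the smoothness order decreases \emph{and} the integrability decreases — and tracking the $\tau$-dependence of the constant. H\"older alone does not do this; one has to apply H\"older inside the Gagliardo double integral and check that the auxiliary exponent stays integrable near the diagonal. You flag this yourself as ``the principal obstacle'' but do not carry it out, so the proof as written is incomplete at exactly the step the lemma is really about. The paper sidesteps the issue: it keeps the temporal smoothness order fixed at $(1-1/p)/2$ and changes \emph{only} the time integrability from $p$ to $q$, so the auxiliary space is $\cL_\ka^{1-1/p}(\tau)=L_q(J_\tau,W_{\coW\ka}^{1-1/p})\cap W_{\coW q}^{(1-1/p)/2}(J_\tau,W_{\coW\ka}^0)$ rather than your $\pl W_{\coW q,\ka}$; then \eqref{LT.Wpp} follows by real interpolation between the integer-order H\"older inequalities for $k=0,1$, with constants manifestly independent of $\ka$. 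You would save yourself the delicate two-parameter embedding by adopting this device.

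Two secondary points. First, your invocation of the anisotropic trace theorem next to Lemma~\ref{lem-LT.ch} is misplaced: Lemma~\ref{lem-LT.ch} already controls $\chi u_\wk$ in an order-$s$ space on $\BX_\ka$ with $s>1$, and the step down to the order-$1$ space is a trivial embedding; the trace operator enters only when you bound $\wh{\mfB}_\ka\mfu$ by $\chi\mfu$ in the commutator estimate itself. Second, your first display (the $L_q$-upgraded analogue of \eqref{LT.Bk}, \eqref{LT.Bk0}, \eqref{LT.Ck}) is plausible, but you should make explicit which spatial scale ($W_p$-based or $W_q$-based) its right-hand side uses, since this determines whether Lemma~\ref{lem-LT.ch} or its intermediate $W_q$-based variant is what you actually need.
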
\po 
%---------------------------------------------------------
\begin{proof} 
Let $s$, $q$, and~$\ve$ be as in the preceding proof. 

\par 
Given any Banach space~$E$, H\"older's inequality gives 
$$ 
\|u\|_{W_{\coW p}^k(J_\tau,E)} 
\leq\tau^\ve\,\|u\|_{W_{\coW q}^k(J_\tau,E)} 
\qa k=0,1. 
$$ 
Hence, by interpolation (see \cite[Theorems VII.2.7.4 
and VII.7.3.4]{Ama19a}), 
\begin{equation}\label{LT.Wpp} 
\|u\|_{W_{\coW p}^{(1-1/p)/2}(J_\tau,E)} 
\leq c\coc\tau^\ve\,\|u\|_{W_{\coW q}^{(1-1/p)/2}(J_\tau,E)} 
\qa 0<\tau<T.  
\end{equation}  

\par 
Set 
$$ 
\cL_\ka^\sa(\tau) 
:=L_q\bigl(J_\tau,W_{\coW\ka}^\sa(\pl\BH^m)\bigr)  
\cap W_{\coW q}^{\sa/2}\bigl(J_\tau,W_{\coW\ka}^0(\pl\BH^m)\bigr) 
\qa \ka\in\gK_\Ga. 
$$ 
Then we get from \eqref{LT.Wpp} 
\begin{equation}\label{LT.L} 
\Vsdot_{\pl_1W_{\coW\ka}(\tau)} 
\leq c\coc\tau^\ve\,\Vsdot_{\cL_\ka^{1-1/p}(\tau)} 
\qquad\text{$\gK_\Ga$-unif.} 
\qb 0<\tau<T.   
\end{equation}  
It is clear from the structure of~$\wh{\mfB}_\ka$ and the mapping 
properties of~$\ga$ that 
$$ 
\|\wh\mfB_\ka\mfu\|_{\cL_\ka^{1-1/p}(\tau)} 
\leq c\,\|\chi\mfu\|_{\cL_\ka^1(\tau)} 
\qquad\text{$\gK_\Ga$-unif.}
$$ 
Since 
\hb{\cL_\ka^s(\tau)\hr\cL_\ka^1(\tau)} \hbox{$\gK_\Ga$-unif.} and unif.\ 
w.r.t.~$\tau$, the first assertion now follows from \eqref{LT.L}, 
Lemma~\ref{lem-LT.ch}, and the fact that $\wh\mfB_\ka$~has its image in the 
closed  linear subspace $\pl_1W_{\coW\ka}(\tau)$ 
of~$\pl W_{\coW\ka}(\tau)$. 

\par 
The proof of the second claim is similar. 
\end{proof}\po  
%=========================================================
%=========================================================
\section{Proof of Theorem~\ref{thm-L.MR}}\label{sec-P} 
Let 
\hb{\mfE=\prod_{\al\in\sA}E_\al}, where each~$E_\al$ is a Banach space 
and $\sA$~is a countable index set. Then 
$\ell_p(\mfE)$ is the Banach space of \hbox{$p$-summable} 
sequences in~$\mfE$. 

\par 
We put 
$$ 
\BE^i[\al]:=\ell_p\bigl(\mfW^{i/\mf2}[\al]\bigr) 
\qb \ga_0\BE[\al]:=\ell_p\bigl(\ga_0\mfW[\al]\bigr) 
\qa \al\in\{0,\Ga\}, 
$$ 
and 
$$ 
\bar\BE^i:=\ell_p\bigl(\bar\mfW^{i/\mf2}\bigr) 
\qb \ga_0\bar\BE:=\ell_p(\ga_0\bar\mfW) 
$$ 
for 
\hb{i=0,2}. Then 
$$ 
\BE^i:=\BE^i[0]\oplus\BE^i[\Ga]\oplus\bar\BE^i 
\qb i=0,2 
\qa \ga_0\BE:=\ga_0\BE[0]\oplus\ga_0\BE[\Ga]\oplus\ga_0\bar\BE. 
\npbd  
$$ 
Moreover, 
\hb{\BF_\Ga:=\ell_p(\pl\mfW)}, 
\ \hb{\BF_S:=\ell_p(\pl_S\bar\mfW)}.   

\par 
Recall the definitions of $\cRcRc$, $(\cR_\Ga,\cR_\Ga^c)$, and 
$(\cR_S,\cR_S^c)$ in Section~\ref{sec-LT}. 
%========================================================= 
\setcounter{proposition}{0} 
\begin{proposition}\label{pro-P.R} 
$\cRcRc$ is an \hbox{r-c} pair for $(\BE^k,\bar W^{k/\mf2})$, 
\,\hb{k=0,2}, and 
$$ 
(\cR_\Ga\oplus\cR_S\oplus\cR,\,\cR_\Ga^c\oplus\cR_S^c\oplus\cR^c) 
$$ 
is an \hbox{r-c} pair for 
$$ 
(\BF_\Ga\oplus\BF_S\oplus\ga_0\bar\BE,\,\pl W\oplus\pl_SW\oplus\ga_0\bar W). 
$$ 
\end{proposition}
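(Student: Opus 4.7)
The retraction identity $\cR\circ\cR^c=\id$ is a direct consequence of the partition-of-unity property \eqref{M.LS}(i): for $u$ in the target space,
\[
\cR\cR^c u
=\sum_\ka\pi_\ka\,\ka^*\bigl(\ka_*(\pi_\ka u)\bigr)
=\sum_\ka\pi_\ka^2\,u=u,
\]
with only finitely many nonzero summands at each point by the finite multiplicity of $\gK$. The same computation shows $\cR_\Ga\cR_\Ga^c=\id$ and $\cR_S\cR_S^c=\id$, since the restrictions of $\{\pi_\ka^2\}$ to $\Ga$ and to $S$ are partitions of unity subordinate to the induced atlases on these submanifolds.

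For the boundedness of $\cR^c\sco\bar W^{k/\mf2}\to\BE^k$, $k\in\{0,2\}$, I would combine \eqref{M.LS}(ii) with the ur-regularity \eqref{U.g} of the atlas to obtain the \hbox{$\gK$-uniform} multiplier estimate
\[
\|\ka_*(\pi_\ka u)\|_{\sW_{\coW\ka}^{k/\mf2}}\le c\,\|u\|_{\bar W^{k/\mf2}(U_\ka\times J)},
\]
then sum the \hbox{$p$-th} powers and invoke finite multiplicity to get $\|\cR^c u\|_{\BE^k}^p\le c\,\|u\|_{\bar W^{k/\mf2}}^p$. For $\ka\in\gK_S$ the \hbox{$S$-adaptation} of $\gK$ ensures that $\ka(S\cap U_\ka)$ is a coordinate hyperplane, so $\ka_*(\pi_\ka u)$ really lies in the jump-preserving space $\bar W_{\coW\ka}^{k/\mf2}$. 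Dually, for the boundedness of $\cR$, the finite multiplicity reduces $\sum_\ka\pi_\ka\ka^*u_\ka$ to an $\ell_\iy$-type sum at each point, and the same product estimate applied in reverse yields $\|\cR\mfu\|_{\bar W^{k/\mf2}}^p\le c\,\|\mfu\|_{\BE^k}^p$.

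The trace-space and initial-trace cases, namely $\ga_0\bar\BE\leftrightarrow\ga_0\bar W$, $\BF_\Ga\leftrightarrow\pl W$ and $\BF_S\leftrightarrow\pl_SW$, involve fractional Slobodeckii exponents and additional anisotropy in time; I would handle them by real interpolation of the integer-order estimates just established (cf.\ \eqref{F.Ws}), using that $\cR$ and $\cR^c$ commute with the interpolation functor $(\cdot,\cdot)_{\ta,p}$ and that the anisotropic norm in \eqref{F.WJ} splits as the intersection of two integer-order Bochner-Sobolev norms. Theorem~\ref{thm-H.S}(i) guarantees that the induced atlases on $\Ga$ and on $S$ are ur, so the $\gK_\Ga$- and $\gK_S$-uniform multiplier estimates transfer unchanged. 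The main technical point I expect is the verification that the coretraction respects the jump-adapted structure when $\ka\in\gK_S$, and that the uniform constants survive the interpolation step; all remaining ingredients are the finite multiplicity, the $BC^\iy$-bounds on the transition maps \eqref{U.K}(ii), and standard trace theorems.
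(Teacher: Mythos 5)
Your proposal is mathematically sound, but a fair comparison is somewhat one-sided: the paper does not prove this proposition at all---its entire proof is a citation of \cite[Theorem~6.1]{Ama12b} (with \cite[Theorem~9.3]{Ama12c} and \cite{AmaVolIII21a} as alternatives). What you have written is in effect a reconstruction of the standard localization argument that underlies that cited retraction--coretraction theorem, and the reconstruction is correct. The pointwise identity $\cR\cR^c=\id$ via $\sum_\ka\pi_\ka^2=1$, the observation that $\{\pi_\ka^2\sn\Ga\}$ and $\{\pi_\ka^2\sn S\}$ are still partitions of unity because the supports are contained in the $U_{\coU\ka}$, the $\gK$-uniform multiplier estimate from \eqref{M.LS}(ii) and \eqref{U.g} combined with finite multiplicity to pass between $\ell_p$-local and global norms in both directions, the remark that $S$-adaptation is what forces $\ka_*(\pi_\ka u)$ to land in the jump-compatible local space $\bar W_{\coW\ka}^{k/\mf2}$, and the use of Theorem~\ref{thm-H.S}(i) to transfer the uniform estimates to $\Ga$ and $S$---all of this is exactly the toolbox the cited theorem is built on, so you have not taken a ``different route'' so much as unfolded the black box. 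Two small points worth flagging if you were to carry this out fully. First, the paper's formula for $\cR_\Ga^c$ is written as acting on an interior function $u$ and taking traces; when verifying $\cR_\Ga\cR_\Ga^c=\id$ on $\pl W$ one must read it, as you implicitly do, as $\vp\mt\bigl(\ka_*^\fdot((\ga\pi_\ka)\vp)\bigr)_\ka$ for boundary data $\vp$. Second, the interpolation step you invoke for the fractional trace exponents is legitimate but does carry the load of showing that the interpolation constants are independent of $\ka$, since otherwise the $\gK$-uniformity would be lost; this is precisely what the machinery in \cite{Ama12b} and \cite{Ama19a} is designed to guarantee, and it is the real content hidden behind the paper's one-line citation.
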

%---------------------------------------------------------
\begin{proof} 
This follows from \cite[Theorem~6.1]{Ama12b} (also see 
\cite[Theorem~9.3]{Ama12c} or \cite{AmaVolIII21a}). 
\end{proof}\po 
%========================================================= 
\setcounter{lemma}{1}
\begin{lemma}\label{lem-P.Gr} 
$\ga_0$~is a retraction from~$\bar W^{2/\mf2}$ onto~$\ga_0\bar W$ and 
from~$\BE^2$ onto~$\ga_0\bar\BE$. Furthermore, 
\hb{\ga_0\cR=\cR\ga_0}. 
\end{lemma}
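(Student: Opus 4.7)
The plan is to prove the lemma's three claims in order: the commutation $\ga_0\cR = \cR\ga_0$, the retraction on the $\ell_p$-product $\BE^2$, and the intrinsic retraction on $\bar W^{2/\mf2}$. The last step will rely on Proposition~\ref{pro-P.R} to transport the $\ell_p$ statement to the intrinsic space.

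For the commutation I would argue directly from~\eqref{LT.RR}. The sum $\cR\mfu = \sum_\ka \pi_\ka \ka^* u_\ka$ is locally finite, and both $\pi_\ka$ and $\ka$ are independent of $t$, so $\ga_0$ passes inside each term to give $\ga_0\cR\mfu = \sum_\ka \pi_\ka \ka^*(\ga_0 u_\ka) = \cR\ga_0\mfu$. The identical argument shows $\cR^c\ga_0 = \ga_0\cR^c$, which will be needed in the last step.

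For the $\BE^2$-retraction, the key input is~\eqref{C.0}: for each $\ka\in\gK$, $\ga_0 \in \cL(\sW_{\coW\ka}^{2/\mf2},\ga_0\sW_{\coW\ka})$ is a retraction with a coretraction $\ga_0^{c,\ka}$, all bounded $\gK$-uniformly. I would assemble the $\ga_0^{c,\ka}$ componentwise into $\ga_0^c(\mfv) := (\ga_0^{c,\ka} v_\ka)_\ka$. The uniform bounds ensure $\ga_0$ and $\ga_0^c$ preserve $\ell_p$-summability between the product spaces, and the relation $\ga_0 \ga_0^{c,\ka} = \id$ on each chart lifts coordinatewise to $\ga_0\ga_0^c = \id$.

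For the intrinsic retraction, I would compose these ingredients with the r-c pairs of Proposition~\ref{pro-P.R}. Continuity of $\ga_0 : \bar W^{2/\mf2} \to \ga_0\bar W$ is immediate from $\ga_0 u = \ga_0 \cR\cR^c u = \cR \ga_0 \cR^c u$, chaining the bounds for $\cR^c : \bar W^{2/\mf2}\to\BE^2$, $\ga_0 : \BE^2\to\ga_0\BE$, and the trace-space $\cR$ into $\ga_0\bar W$. A coretraction is built by composition: if $\cR_0^c$ denotes the coretraction into the trace-space product furnished by Proposition~\ref{pro-P.R}, then $u := \cR\,\ga_0^c\,\cR_0^c v$ satisfies $\ga_0 u = \cR\ga_0\ga_0^c\cR_0^c v = \cR\cR_0^c v = v$, using the commutation from the first paragraph. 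The only delicate point is that the time-dependent $\cR$ of Proposition~\ref{pro-P.R} restricts at $t=0$ to the trace-space $\cR$, which is again a consequence of the time-independence of the data $(\pi_\ka, \ka)$, so no genuinely new estimate is needed.
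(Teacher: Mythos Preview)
Your proof is correct and follows essentially the same route as the paper: both obtain the $\ell_p$-retraction from the $\gK$-uniform local coretractions of~\eqref{C.0}, observe that $\ga_0$ commutes with $\cR$ (and $\cR^c$) because the localization data are time-independent, and then transport to the intrinsic space via the \hbox{r-c} pair of Proposition~\ref{pro-P.R}, defining the intrinsic coretraction as $\cR\,\ga_0^c\,\cR^c$. You are slightly more explicit than the paper in verifying $\ga_0\bigl(\cR\,\ga_0^c\,\cR^c\bigr)=\id$ and in noting that the ``trace-space'' $\cR^c$ is simply the same formula~\eqref{LT.RR} read on the initial slice; the paper absorbs both points into the single remark that $\ga_0$ is evaluation at $t=0$ and $\cRcRc$ is independent of~$t$.
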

%---------------------------------------------------------
\begin{proof} 
Due to \eqref{C.0} there exists 
\hb{\ga_0^c\in\cL(\ga_0\bar\BE,\BE^2)} such that $(\ga_0,\ga_0^c)$ is 
an \hbox{r-c} pair for~$(\BE^2,\ga_0\bar\BE)$. For the moment we denote it 
by~$(\bar\ga_0,\bar\ga_0^c)$. Then it follows from 
Proposition~\ref{pro-P.R} that 
$$ 
\cR\bar\ga_0\cR^c\in\cL(\bar W^{2/\mf2},\ga_0\bar W) 
\qb \cR\bar\ga_0^c\cR^c\in\cL(\ga_0\bar W,\bar W^{2/\mf2}). 
$$ 
Since $\ga_0$~is the evaluation at 
\hb{t=0} and $\cRcRc$~is independent of 
\hb{t\in J}, we see that 
\hb{\ga_0\cR=\cR\bar\ga_0}. Hence 
\hb{\ga_0\in\cL(\bar W^{2/\mf2},\ga_0\bar W)} and 
\hb{\ga_0^c:=\cR\bar\ga_0^c\cR^c} is a continuous right 
inverse for~$\ga$. 
\end{proof} 

\par 
We write  
$$ 
\wh{\BA}:=\mfsA+\wh{\mfsA} 
\text{ with }  
\wh{\mfsA}:=[\wh{\mfsA}_\ka]_{\ka\in\gK} 
$$ 
and, using \eqref{P.Ws}, set 
\hb{\mfu=(\mfv,\mfw,\mfz)\in\mfsW^{2/\mf2}} and, analogously,   
$$ 
\wh{\BB}\mfu:=(\mfB+\wh{\mfB})\mfw\in\pl\mfW 
\qb \wh{\BC}\mfu:=(\mfC+\wh{\mfC})\mfz\in\pl_S\mfW. 
\npbd 
$$ 
Moreover, 
\hb{\BG:=\BF_\Ga\oplus\BF_S\oplus\ga_0\bar\BE} and 
$[\BG]_{\mfB,\mfC}$~is the  linear subspace  consisting of all 
\hb{(\mfvp,\mf\psi,\mfu_0)} satisfying the compatibility conditions 
$$ 
\mfB(0)\mfu_0 =\mfvp(0) 
\qb \mfC(0)\mfu_0 =\mfpsi(0) 
$$ 
if 
\hb{p>3}, with the corresponding modifications if 
\hb{p<3}. Analogous definitions apply to~$[\BG]_{\wh{\BB},\wh{\BC}}$.
%========================================================= 
\setcounter{proposition}{2} 
\begin{proposition}\label{pro-P.is} 
${}$ %do not remove! 
\hb{\bigl(\pl_t+\wh{\BA},\,(\wh{\BB},\wh{\BC},\ga_0)\bigr) 
   \in\Lis\bigl(\BE^2,\,\BE^0\oplus[\BG]_{\wh{\BB},\wh{\BC}}\bigr)}. 
\end{proposition}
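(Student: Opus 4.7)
The plan is to first treat the diagonal, unperturbed operator and then absorb the off–diagonal perturbation by a small–time contraction argument.

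\textbf{Step 1: the diagonal problem.} I would first observe that, by Propositions \ref{pro-MR.1}, \ref{pro-MR.2}, \ref{pro-MR.3}, the operator
$$\bigl(\pl_t+\mfsA,\,(\mfB,\mfC,\ga_0)\bigr)$$
is a $\gK$-uniform isomorphism on each fiber. Taking the $\ell_p$-direct sum over $\ka\in\gK$ and using the uniform bounds on the inverses, one gets
$$\bigl(\pl_t+\mfsA,\,(\mfB,\mfC,\ga_0)\bigr)\in\Lis\bigl(\BE^2,\,\BE^0\oplus[\BG]_{\mfB,\mfC}\bigr).$$
This requires only that the compatibility subspace $[\BG]_{\mfB,\mfC}$ is precisely the image of $\BE^2$ under the trace map $(\mfB(0),\mfC(0),\ga_0)$, which is a fiber-wise check combined with Lemma \ref{lem-P.Gr} and the boundedness of $\mfB$, $\mfC$ on the $\ell_p$ sums; the trace theorem cited after \eqref{C.0} makes this routine.

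\textbf{Step 2: perturbation on short time intervals.} Given $0<\tau\le T$, consider the problem on $J_\tau$. By Lemmas \ref{lem-LT.A0} and \ref{lem-LT.BC}, we have $\wh{\mfsA},\wh{\mfB},\wh{\mfC}$ of lower order with norms bounded by $c\tau^\ve$ (uniformly in $\ka$), and these estimates transfer to $\ell_p$ sums because the neighborhood counting bound \eqref{LT.k} makes the relevant sums $\gK$-uniform (Young's inequality for convolutions on the finite-multiplicity `adjacency graph'). Composing the inverse from Step~1 with these small perturbations gives a Neumann series: for $\tau$ sufficiently small (independently of the data), the operator
$$\bigl(\pl_t+\wh{\BA},\,(\wh{\BB},\wh{\BC},\ga_0)\bigr)$$
differs from the diagonal one by a contraction, hence is in $\Lis(\BE^2(\tau),\BE^0(\tau)\oplus[\BG(\tau)]_{\wh{\BB},\wh{\BC}})$. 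The subtle point is that the compatibility subspaces for $(\mfB,\mfC)$ and for $(\wh{\BB},\wh{\BC})$ coincide at $t=0$, because $\wh{\mfsA}^0,\wh{\mfB}^0,\wh{\mfC}^0$ all vanish on data evaluated at $t=0$ (they are lower-order remainders supported in time via the $\tau^\ve$ factor); this identification has to be checked cleanly from the definitions in Section~\ref{sec-LT}.

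\textbf{Step 3: globalization.} To pass from a small interval $[0,\tau_0]$ to all of $J=[0,T]$, I would iterate: solve on $[0,\tau_0]$, use the final-time value (which lies in $\ga_0\bar\BE$ by the trace Lemma \ref{lem-P.Gr}) as a new initial condition, and solve on $[\tau_0,2\tau_0]$. The compatibility conditions at the new initial time are automatically satisfied because the restriction of a solution satisfies $\wh{\BB}u=\wh\BB$-data and $\wh{\BC}u=\wh\BC$-data at that time. Since $\tau_0$ does not depend on the data, finitely many steps cover $J$, and the resulting solution operator is bounded. Uniqueness follows from the fiberwise uniqueness in Step~1 and the injectivity of the small-time perturbed operator.

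\textbf{Main obstacle.} The heart of the argument is making the $\tau^\ve$-contraction work simultaneously across all $\ka\in\gK$: one must verify that passing from $\sW_\ka^{0/\mf2}$ to $\ell_p$ preserves the $\gK$-uniform smallness estimates and that the $\ell_p$-summability of the right-hand side of the estimates in Lemmas \ref{lem-LT.A0}--\ref{lem-LT.BC} (which contain sums over $\gN(\ka)$, $\gN_\Ga(\ka)$, $\gN_S(\ka)$) is controlled uniformly by \eqref{LT.k}. Once this is done, the Neumann series and the gluing in Step~3 are standard, and the proposition follows.
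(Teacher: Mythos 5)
Your overall strategy matches the paper's: first establish the diagonal isomorphism from Propositions~\ref{pro-MR.1}--\ref{pro-MR.3}, then absorb the off-diagonal terms $(\wh{\mfsA},\wh{\mfB},\wh{\mfC})$ on a short interval using the $\tau^\ve$ estimates of Lemmas~\ref{lem-LT.A0} and~\ref{lem-LT.BC} together with the finite-multiplicity bound~\eqref{LT.k}, and finally iterate in time. Steps~1 and~3 of your proposal are essentially identical to steps~(1) and~(4) of the paper's proof.

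There is, however, a genuine gap in your Step~2, and it sits exactly at the point you flagged as needing to be ``checked cleanly.'' Your justification for why the compatibility subspaces $[\BG]_{\mfB,\mfC}$ and $[\BG]_{\wh{\BB},\wh{\BC}}$ coincide --- namely that $\wh{\mfsA}^0,\wh{\mfB}^0,\wh{\mfC}^0$ ``vanish on data evaluated at $t=0$'' because of the $\tau^\ve$ factor --- is incorrect. The $\tau^\ve$ estimates are $L_p$-type bounds on shrinking time intervals and say nothing about the value of these operators at $t=0$; the commutator operators $\mfB^0_\ka$ etc.\ have coefficients involving $\ga_0 a_\ka$, $\pi_\ka$, and metric data that are perfectly nonzero at $t=0$. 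Consequently, for general $\mfu_0\neq 0$ one has $\wh{\BB}(0)\mfu_0 = \mfB(0)\mfu_0 + \wh{\mfB}(0)\mfu_0 \neq \mfB(0)\mfu_0$, so the two compatibility subspaces of $\BG$ genuinely differ. This means the Neumann-series argument cannot be applied directly, because the diagonal isomorphism and the perturbed operator do not map into the same closed subspace of $\BE^0\oplus\BG$.

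The paper handles this by first performing a reduction to zero initial data: using the coretraction $\ga_0^c$ from Lemma~\ref{lem-P.Gr}, it subtracts $\ol{\mfu}:=\ga_0^c\mfu_0$ to reduce to solving $\wh{\BL}\mfv=(\mff_0,\mfg_0)$ with $\mfv\in\BE_0^2:=\{\mfw\in\BE^2;\ \ga_0\mfw=\mf0\}$ and right-hand side in $\BG_0:=\BF_\Ga\oplus\BF_S\oplus\{0\}$. On $\BG_0$ the two compatibility conditions coincide (for $p>3$ both become $\mfvp(0)=0$, $\mfpsi(0)=0$), since for $\mfw\in\BE_0^2$ one has $\ga_0(\mfB\mfw)=\mfB(0)\ga_0\mfw=0=\wh{\BB}(0)\ga_0\mfw=\ga_0(\wh{\BB}\mfw)$. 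Only after this reduction does the $\tau^\ve$-contraction and Neumann-series argument go through cleanly. Adding this reduction step would repair your Step~2.
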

%---------------------------------------------------------
\begin{proof} 
(1) 
First we prove that 
\begin{equation}\label{P.D} 
\BL:=\bigl(\pl_t+\mfsA,\,(\mfB,\mfC,\ga_0)\bigr) 
\in\Lis\bigl(\BE^2,\,\BE^0\oplus[\BG]_{\mfB,\mfC}\bigr). 
\end{equation}  
Since $\BL$~has diagonal structure, the claim is a direct consequence of 
Propositions \hbox{\ref{pro-MR.1}--\ref{pro-MR.3}}.  
%\ref{pro-MR.2}

\par 
(2) 
Set 
\hb{\BL_0:=(\wh{\mfsA},\wh{\mfB},\wh{\mfC},0)}. Then  
\begin{equation}\label{P.DD} 
\wh{\BL}:=
(\pl_t+\wh{\BA},\ \wh{\BB},\wh{\BC},\ga_0)=\BL+\wh{\BL}_0. 
\end{equation}  
It follows from \eqref{LT.k} and 
Lemmas \ref{lem-LT.A0} and \ref{lem-LT.BC} that 
\hb{\wh{\BL}_0\in\cL(\BE^2,\,\BE^0\oplus\BG)} and there exists 
\hb{\ve>0} such that 
\begin{equation}\label{P.t} 
\|\wh{\BL}_0\|_{\cL(\BE^2(\tau),\,\BE^0(\tau)\oplus\BG(\tau))} 
\leq c\coc\tau^\ve 
\qa 0<\tau\leq T. 
\end{equation}  
From this and step~(1) we see that 
\begin{equation}\label{P.L} 
\wh{\BL}\in\cL(\BE^2,\,\BE^0\oplus\BG). 
\end{equation}  

\par 
Write 
$$ 
\BE_0^2:=\{\,\mfv\in\BE^2\ ;\ \ga_0\mfv=\mf0\,\} 
\qb \BG_0:=\BF_\Ga\oplus\BF_S\oplus\{0\}. 
$$ 
By Lemma~\ref{lem-P.Gr} we can choose a coretraction~$\ga_0^c$ for 
\hb{\ga_0\in\cL(\BE^2,\ga_0\bar\BE)}. Let 
\hb{(\mff,\mfg)\in\BE^0\oplus[\BG]_{\wh{\BB},\wh{\BC}}} with 
\hb{\mfg=(\mfvp,\mfpsi,\mfu_0)}. Set 
\hb{\ol{\mfu}:=\ga_0^c\mfu_0}. Then 
\hb{\mfu\in\BE^2} satisfies 
\hb{\wh{\BL}\mfu=(\mff,\mfg)} iff 
\hb{\mfv:=\mfu-\ol{\mfu}} is such that 
$$ 
\wh{\BL}\mfv=(\mff,\mfg)-\wh{\BL}\ol{\mfu} 
=:(\mff_0,\mfg_0). 
$$ 
Note that 
\hb{\mfv\in\BE_0^2} and 
\hb{\mfg_0\in[\BG_0]_{\wh{\BB},\wh{\BC}}}. Suppose 
\hb{p>3}. Given any 
\hb{\mfw\in\BE_0^2}, we get 
$$ 
\ga_0(\mfB\mfw)=\mfB(0)\ga_0\mfw=0=\wh{\BB}(0)\ga_0\mfw=\ga_0(\wh{\BB}\mfw). 
$$ 
From this, the analogous relation for $\mfC$ and~$\wh{\BC}$, 
and \eqref{P.L} we infer that it suffices to prove that 
\hb{\wh{\BL}\sco\BE_0^2\ra\BE^0\oplus[\BG_0]_{\mfB,\mfC}} is surjective 
and has a continuous inverse. Obvious modifications apply to 
\hb{p<3}. 

\par 
(3) 
Let 
\hb{\BF:=\BE^0\oplus[\BG_0]_{\mfB,\mfC}} and 
\hb{\mfh\in\BF}. Suppose 
\hb{\mfu\in\BE_0^2} and set 
\hb{\mfv:=\BL\mfu\in\BF}. By step~(1) and \eqref{P.DD}, the equation 
\hb{\wh{\BL}u=\mfh} is equivalent to 
\hb{\mfv+\wh{\BL}_0\BL^{-1}\mfv=\mfh}. Observe 
\hb{\wh{\BL}_0\BL^{-1}\mfv\in\BF}. 

\par 
Due to \eqref{P.t}, we can fix 
\hb{\ol{\tau}\in(0,T]} such that 
\hb{\|\wh{\BL}_0\BL^{-1}\|_{\cL(\BF(\tau))}\leq1/2}. As is well-known 
(e.g.,~\cite[Lemma~12.2]{Ama17a}), this implies that 
\hb{\wh{\BL}\in\Lis\bigl(\BF(\ol{\tau}),\BF(\ol{\tau})\bigr)}. 

\par 
(4) 
If 
\hb{\ol{\tau}=T}, then we are done. Otherwise we repeat this argument 
for the problem in 
\hb{[0,\,T-\ol{\tau}]} obtained by the time-shift 
\hb{t\mt t-\ol{\tau}} and with the initial value~$\mfu(\ol{\tau})$. After 
finitely many such steps we reach~$T$. The proposition is proved. 
\end{proof} 
%---------------------------------------------------------
\begin{proof} 
{\it of Theorem~\ref{thm-L.MR}} 
Now we write $(\BA,\BB,\BC)$ for $(\wh{\BA},\wh{\BB},\wh{\BC})$ 
if 
$(\wh{\mfsA},\wh{\mfB},\wh{\mfC})$ equals $(\mfsA^0,\mfB^0,\mfC^0)$, and 
$(\wt{\BA},\wt{\BB},\wt{\BC})$ otherwise. We also set 
\hb{G:=\pl W\oplus\pl_SW\oplus\ga_0\bar W}. Then Proposition~\ref{pro-P.R} 
implies 
$$ 
\vecTeX\cR:=\cR\oplus(\cR_\Ga\oplus\cR_S\oplus\cR) 
\in\cL(\BE^k\oplus\BG,\,\bar W^{k/\mf2}\oplus G) 
$$ 
and 
$$ 
\ {}% do not remove! 
\vecTeX\cR^c:=\cR^c\oplus(\cR_\Ga^c\oplus\cR_S^c\oplus\cR^c) 
\in\cL(\bar W^{k/\mf2}\oplus G,\,\BE^k\oplus\BG) 
\npbd 
$$ 
for 
\hb{k=0,2}.\po 

\par 
Let 
\hb{\bigl(\mfu,(\mfvp,\mfpsi,\mfu_0)\bigr)\in\BE^2\oplus\BG} and write 
\hb{\bigl(u,(\vp,\psi,u_0)\bigr)}
for its image under~$\vecTeX\cR$. Then we obtain from \eqref{LT.RBB} and 
\eqref{LT.CCR}  
$$ 
\cB u=\cB\cR\mfu=\cR_\Ga\BB\mfu 
\qb \cC u=\cC\cR\mfu=\cR_S\BC\mfu. 
$$ 
Suppose 
\hb{p>3}, 
\,\hb{\mfu_0=\ga_0\mfu}, and 
\hb{\BB(0)\mfu_0=\mfvp(0)}. Then 
$$ 
\bal 
\vp(0) 
&=\cR_\Ga\mfvp(0) 
 =\cR_\Ga\BB(0)\mfu_0 
 =\cR_\Ga\ga_0(\BB\mfu)\cr 
&=\ga_0\cR_\Ga\BB\mfu 
 =\ga_0\cB\cR\mfu=\ga_0(\cB u) 
 =\cB(0)(\ga_0u). 
\eal 
$$ 

\par 
Since Lemma~\ref{lem-P.Gr} and 
\hb{\mfu_0=\ga_0\mfu} imply 
\hb{u_0=\ga_0u}, we see that 
\hb{\cB(0)u_0=\vp_0}. Similarly, we find that it follows from 
\hb{\BC(0)\mfu_0=\mfpsi(0)} that 
\hb{\cC(0)u_0=\psi(0)}. Thus, letting 
\hb{\BF:=[\BG]_{\BB,\BC}} and 
\hb{F:=[G]_{\cB,\cC}}, 
we have shown that 
$$ 
\vecTeX\cR(\BE^2\oplus\BF)\is\bar W^{2/\mf2}\oplus F. 
$$ 
Consequently, 
\begin{equation}\label{P.R} 
\vecTeX\cR\in\cL(\BE^2\oplus\BF,\,\bar W^{2/\mf2}\oplus F). 
\end{equation}  
We find analogously that 
\begin{equation}\label{P.Rc} 
\vecTeX\cR^c\in\cL(\bar W^{2/\mf2}\oplus F,\,\BE^2\oplus\BF). 
\npbd 
\end{equation}  
This holds for 
\hb{p>3}. The case 
\hb{p<3} is similar.\po 

\par 
Now we set 
$$ 
L:=\bigl(\pl_t+\cA,\,(\cB,\cC,\ga_0)\bigr) 
\qb \BL:=\bigl(\pl+\BA,\,(\BB,\BC,\ga_0)\bigr). 
$$ 
Define 
\hb{\wt{\BL}:=\bigl(\pl+\wt{\BA},\,(\wt{\BB},\wt{\BC},\ga_0)\bigr)}. 
It is a consequence 
of \eqref{LT.RAA}, \eqref{LT.AAR}, \eqref{LT.RBB}, \eqref{LT.BBR}, 
\eqref{LT.CCR}, and the fact that $\cRcRc$ is independent of~$t$ that 
\begin{equation}\label{P.LR} 
L\cR=\vecTeX\cR\BL 
\qb \vecTeX\cR^cL=\wt{\BL}\vecTeX\cR^c. 
\end{equation}  
Proposition~\ref{pro-P.is} guarantees  
\begin{equation}\label{P.LL} 
\BL,\wt{\BL}\in\Lis(\BE^2,\,\BE^0\oplus\BF). 
\end{equation}  
Suppose 
\hb{Lu=0}. Then \eqref{P.LR} and \eqref{P.LL} imply 
\hb{\cR^cu=0}. Thus 
\hb{u=\cR\cR^cu=0}. This shows that $L$~is injective. 

\par 
Let 
\hb{(f,g)\in\bar W^{0/\mf2}\oplus F}. By \eqref{P.LL} we find  
\hb{\mfu\in\BE^2} satisfying 
\hb{\BL\mfu=\vecTeX\cR^c(f,g)}. Put 
$$ 
u:=\cR\mfu=\cR\BL^{-1}\vecTeX\cR^c(f,g)\in\bar W^{2/\mf2}. 
$$ 
Then, by \eqref{P.LR} and \eqref{P.LL}, 
$$ 
Lu=L\cR\BL^{-1}\vecTeX\cR^c(f,g) 
=\vecTeX\cR\BL\BL^{-1}\vecTeX\cR^c(f,g) 
=\vecTeX\cR\vecTeX\cR^c(f,g) 
=(f,g). 
$$ 
Hence $L$~is surjective and, by \eqref{P.R} and \eqref{P.Rc}, 
$$ 
L^{-1}=\cR\BL^{-1}\vecTeX\cR^c 
\in\cL(\bar W^{0/\mf2}\oplus F,\,\bar W^{2/\mf2}). 
\npbd  
$$ 
The proof is accomplished. 
\end{proof}
%=========================================================
\setcounter{remarks}{3} 
\begin{remarks}\label{rem-P.P}  
(a) 
Recall that either some of $\Ga_0$, $\Ga_1$, and~$S$, or all of them, 
can be empty. If 
\hb{(\Ga,S)\neq\{\es,\es\}}, then the result is new. Otherwise, it is 
a special case of the more general Theorem~1.23(ii) of~\cite{Ama17a}. 

\par 
(b) 
Theorem~\ref{thm-L.MR} is true for systems 
\hb{u=(u^1,\ldots,u^n)}, provided the uniform 
Lopatinskii-Shapiro conditions apply. This is trivially the case if 
$a$~is a diagonal matrix.\qed 
\end{remarks} 
%=========================================================
%=========================================================
\section{Membranes with Boundary}\label{sec-MB} 
Now we turn to the case of membranes intersecting~$\Ga$ transversally. This 
case is handled by reducing it to the situation studied in the preceding 
section. 
%=========================================================
\begin{theorem}\label{thm-MB.P} 
Let \eqref{S.ass} be satisfied. Theorem~\ref{thm-L.MR} applies with $\Mg$ 
replaced by $\whMwhg$. 
\end{theorem}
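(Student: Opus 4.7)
The plan is to obtain Theorem \ref{thm-MB.P} as a direct corollary of Theorem \ref{thm-S.M} and Theorem \ref{thm-L.MR}. First, I would invoke Theorem \ref{thm-S.M} to conclude that $\whMwhg$ is an oriented urR manifold with boundary $\wh\Ga := \Ga\ssm\Sa$, and that $\wh{S} := S\ssm\Sa$ is a membrane in $\wh M$ \emph{without} boundary. This is precisely hypothesis \eqref{F.S} needed to apply Theorem \ref{thm-L.MR} on the manifold $\whMwhg$ with the membrane $\wh S$.

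Next, I would check that the data of the transmission problem \eqref{L.T} transfer correctly to the new setting. The decomposition $\Ga = \Ga_0\cup\Ga_1$ induced by $\da \in C(\Ga,\{0,1\})$ restricts to $\wh\Ga = \wh\Ga_0 \cup \wh\Ga_1$ (use that $\Sa\is\Ga$ is closed, so each $\wh\Ga_j := \Ga_j\ssm\Sa$ is open and closed in $\wh\Ga$, inheriting a ur structure by Theorem~\ref{thm-H.S}(i) for $\whMwhg$). Since $\wh M\ssm\wh S = M\ssm S$ set-theoretically, and the coefficient hypothesis of Theorem \ref{thm-MB.P} is by definition
\[
a \in \bar{BC}^{1/\mf2}\bigl((\wh M\ssm\wh S)\times J;\wh g\bigr), \qquad a\geq\ul\al,
\]
condition \eqref{L.a} is satisfied with respect to $\wh g$. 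The operators $\cA,\cB,\cC$ then stand for the Laplace--Beltrami-type operator, the conormal boundary operator, and the jump/flux transmission operator, all formed with respect to $\wh g$; by Theorem \ref{thm-H.S}(iii) applied to $\wh S\is\ci{\wh M}$, the uniform tubular neighborhood of $\wh S$ exists, so $\cC$ is well-defined in the sense of \S\ref{sec-F}.

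With these identifications, Theorem \ref{thm-L.MR} applied to the triple $(\whMwhg, \wh S, a)$ produces the unique solution $u \in \bar W_{\coW p}^{2/\mf2}((\wh M\ssm\wh S)\times J;\wh g)$ with continuous dependence on the data, which is the conclusion of Theorem \ref{thm-MB.P}. The main obstacle—already resolved in Theorem \ref{thm-S.M}—is the proof that the wedge-singular metric $\wh g$ still yields a urR manifold in spite of the blow-up of $\rho^{-2}(dx^2+dy^2)$ near $\Sa$; once this is in hand, no further analytic work is required. A minor supplementary check is that on the overlap region $M\ssm\wt U(\ve/3)$, where $\wh g \sim g$ by \eqref{S.gg}, the two sets of function spaces $\bar W_{\coW p}^s(\,\cdot\,;g)$ and $\bar W_{\coW p}^s(\,\cdot\,;\wh g)$ agree up to equivalent norms, so the assumption on $a$ in Theorem \ref{thm-MB.P} is consistent with the Euclidean formulation of Theorem \ref{thm-R.RD}.
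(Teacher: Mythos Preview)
Your proposal is correct and follows exactly the same route as the paper: the paper's proof consists of the single line ``Theorem~\ref{thm-S.M},'' and your argument is precisely that---invoke Theorem~\ref{thm-S.M} to verify hypothesis~\eqref{F.S} for $\whMwhg$ and~$\wh S$, then apply Theorem~\ref{thm-L.MR}. The additional checks you spell out (restriction of~$\da$ to~$\wh\Ga$, well-definedness of~$\cC$ via the tubular neighborhood of~$\wh S$, equivalence of function spaces on the overlap) are reasonable to make explicit but are treated as routine by the paper.
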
 
%---------------------------------------------------------
\begin{proof} 
Theorem~\ref{thm-S.M}. 
\end{proof}\po  
In preparation for the proof of Theorem~\ref{thm-R.RD} we derive rather 
explicit representations of $(\cA,\cB,\cC)$ and the relevant function 
spaces in a tubular neighborhood of 
\hb{\Sa=\pl S} in~$\wh{M}$. 

\par 
We use the notations of Sections \ref{sec-H} and~\ref{sec-S} and set 
$$ 
U:=U(\ve) 
\qa \dot N:=N(\ve)\big\backslash\bigl\{(0,0)\bigr\} 
\qb \wt{g}:=g_2/\rho^2. 
$$ 
The Christoffel symbols~$\wt{\Ga}_{ij}^k$ for the metric~$\wt{g}$ 
turn out to be  
$$ 
\wt{\Ga}_{11}^1=\wt{\Ga}_{12}^2=-\wt{\Ga}_{22}^1=-\rho^{-1}\pl_1\rho 
\qb \wt{\Ga}_{22}^2=\wt{\Ga}_{12}^1=-\wt{\Ga}_{11}^2=-\rho^{-1}\pl_2\rho. 
$$ 
We set        
\hb{D:=(D_1,D_2)} with 
\hb{D_i:=\rho\pl_i} and 
\hb{\wt{\na}:=\na_{\wt{g}}}. Then, for 
\hb{u\in\bar C^2(\dot N\ssm G_\sa)} with 
\hb{G_\sa:=\graph(f_\sa)}, 
\begin{equation}\label{MB.N} 
\bal 
\rho^2\wt{\na}_{11}u 
&=D_1^2u-\pl_2\rho D_2u,    
    &\quad \rho^2\wt{\na}_{12}u 
    &=D_1D_2u+\pl_2\rho D_1u,\cr 
\rho^2\wt{\na}_{21}u 
&=D_2D_1u+\pl_1\rho D_2u, 
    &\quad \rho^2\wt{\na}_{22}u 
    &=D_2^2u-\pl_1\rho D_1u. 
\eal 
\end{equation}  
Hence, see \eqref{F.1} and \eqref{F.2}, 
\begin{equation}\label{MB.D1} 
|\wt{\na}u|_{\wt{g}^*}^2 
=|Du|^2=|D_1u|^2+|D_2u|^2 
\end{equation}  
and 
\begin{equation}\label{MB.D2} 
|\wt{\na}^2u|_{\wt{g}_0^2}^2 
=\rho^4\Bigl((\wt{\na}_{11}u)^2+(\wt{\na}_{12}u)^2 
 +(\wt{\na}_{21}u)^2+(\wt{\na}_{22}u)^2\Bigr). 
\end{equation}  

\par 
Let 
$$ 
[D^2u]^2:=(D_1^2u)^2+(D_1D_2u)^2+(D_2D_1u)^2+(D_2^2u)^2 
$$ 
and 
$$ 
\dl u\dr_\rho^2 
:=|u|^2+|Du|^2+[D^2u]^2+|\na_hu|_{h^*}^2+|\na_h^2u|_{h_0^2}^2. 
$$ 
We write 
$$ 
\bar\cW_{\coW p}^2 
=\bigl(\bar\cW_{\coW p}^2,\Vsdot_{\bar\cW_{\coW p}^2}\bigr) 
$$ 
for the space of all 
\hb{u\in L_{1,\loc}(\dot N\times\Sa)} for which the norm 
$$ 
\|u\|_{\bar\cW_{\coW p}^2} 
:=\Bigl(\int_\Sa\int_{\dot N\setminus G(\sa)} 
\dl u\dr_\rho^p(x,y,\sa)\,\frac{d(x,y)}{\rho^2(x,y)} 
\,d\vol_\Sa(\sa)\Bigr)^{1/p} 
$$ 
is finite. Moreover, $\bar\cW_{\coW p}^0$~is obtained by replacing~% 
\hb{\dl\cdot\dr_\rho} by~% 
\hb{\vsdot}. It is then clear how to define the anisotropic 
spaces~$\bar\cW_{\coW p}^{k/\mf2}$, 
\,\hb{k=0,2}. 
%=========================================================
\setcounter{proposition}{1} 
\begin{proposition}\label{pro-MB.N} 
${}$% do not remove! 
\hb{u\in\bar W_{\coW p}^2(U\ssm S;\wh{g})} iff 
\hb{\chi_*u\in\bar\cW_{\coW p}^2}. 
\end{proposition}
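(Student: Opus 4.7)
The plan is to work in the coordinates provided by $\chi$, which identifies $U$ with an open subset of $\dot N\times\Sa$ and transports $\wh g$ to the product metric $\wt g+h=(dx^2+dy^2)/\rho^2+h$. Both spaces in the proposition are defined as the completion in $L_{1,\loc}$ of $\bar C^2$ of the same (identified) open set, so the claim reduces to verifying that the two integrand norms are equivalent up to uniform constants and that the corresponding weighted volume forms coincide.

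First I would match the volume element: block-diagonality of $\wt g+h$ gives $\det(\wt g+h)=\rho^{-4}\det h$, hence $d\vol_{\wh g}$ pushes forward to $\rho^{-2}\,d(x,y)\,d\vol_\Sa$, which is precisely the weight appearing in $\Vsdot_{\bar\cW_{\coW p}^2}$. For the first-order contribution, block-diagonality of $\wh g^{*}$ yields $|\wh\na u|_{\wh g^*}^2=|\wt\na u|_{\wt g^*}^2+|\na_h u|_{h^*}^2$, and \eqref{MB.D1} rewrites the first summand as $|Du|^2$, matching two of the summands in $\dl u\dr_\rho^2$.

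The central computation is the Hessian. Because $\wt g$ depends only on $(x,y)$ and $h$ only on $\sa$, all Christoffel symbols of $\wt g+h$ whose indices straddle the two factors vanish, so $\wh\na^2 u$ decomposes into the pure-$N$ block $\wt\na^2 u$, the pure-$\Sa$ block $\na_h^2 u$, and mixed entries $\pl_i\pl_\al u=\rho^{-1}D_i\pl_\al u$; contracting with the block-diagonal $\wh g_0^{\,2}$ and using the commutation of $D_i$ with $\na_h$ (valid because $\rho$ is $\sa$-independent) gives
\[
|\wh\na^2 u|_{\wh g_0^{2}}^2
=|\wt\na^2 u|_{\wt g_0^{2}}^2+2\sum_{i=1}^2 |\na_h(D_i u)|_{h^*}^2+|\na_h^2 u|_{h_0^{2}}^2.
\]
Formulas \eqref{MB.N}, \eqref{MB.D2}, together with the uniform bound $|\pl\rho|\le c$, then show that $|\wt\na^2 u|_{\wt g_0^{2}}^2$ is pointwise equivalent to $[D^2 u]^2$ modulo a remainder dominated by $|Du|^2$, and the pure-$\Sa$ block of the Hessian is exactly the last summand of $\dl u\dr_\rho^2$.

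The main obstacle is the mixed cross term $|\na_h(D_i u)|_{h^*}^2=|D_i\na_h u|_{h^*}^2$, which is present in $|\wh\na^2 u|^2$ but not explicitly listed among the summands of $\dl u\dr_\rho^2$. I expect the resolution to be that $\dl u\dr_\rho^2$ is to be read as implicitly including this mixed piece (a shorthand), in which case the calculation above immediately produces two-sided pointwise equivalence; otherwise one would control the mixed term in the $L_p$-integrand sense using the commutation $\na_h D_i u=D_i\na_h u$ and a fibrewise interpolation estimate on $(\Sa,h)$, whose uniformity across fibres is furnished by the urR structure given in Theorem~\ref{thm-H.S}(i), and then integrate against $\rho^{-2}\,d(x,y)$. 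In either case, once the two integrand norms are proved equivalent on $\bar C^2$, the equality of the completions follows by the standard density argument, proving the proposition.
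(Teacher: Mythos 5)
Your plan—establish a two-sided pointwise equivalence between $\dl u\dr_\rho^2$ and $|u|^2+|\na u|^2_{\wh g^*}+|\na^2 u|^2_{\wh g_0^2}$, then pass to the completions by density—is structurally different from what the paper does. The paper proves only the single inequality \eqref{MB.u}, uses \eqref{MB.N}--\eqref{MB.D2} to show that $u\mt\chi_*u$ carries converging sequences in $\bar W_{\coW p}^2(U\ssm S;\wh g)$ to converging sequences in $\bar\cW_{\coW p}^2$ (this is the reverse pointwise estimate, which does hold), and then closes the argument with Banach's homomorphism theorem; it never attempts a direct two-sided pointwise comparison.

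Your diagnosis of where a direct pointwise comparison breaks is exactly right: the mixed block of the Hessian, $2\sum_i|D_i\na_h u|^2_{h^*}$, appears in $|\na^2 u|^2_{\wh g_0^2}$ but is not among the summands of $\dl u\dr_\rho^2$, and it cannot be dominated pointwise by the terms that are. Neither of your two proposed repairs survives scrutiny, though. The ``shorthand'' reading is simply wrong: the definition of $\dl u\dr_\rho^2$ given in the text genuinely omits the cross term. The fibrewise interpolation on $(\Sa,h)$ also fails: to bound $\|D_i\na_h u\|$ by interpolating in the $\Sa$-direction alone you would need $\|D_i\na_h^2 u\|$ at the high end, and interpolating in the $\dot N$-direction alone would instead require $\|\na_h D_i^2 u\|$---neither of which is part of the norm. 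What is actually needed is a genuine mixed-derivative (intermediate-derivative) estimate across the product $\dot N\times\Sa$, of the schematic form $\|\pl_1\pl_2 u\|_{L_p}\leq c\bigl(\|u\|_{L_p}+\|\pl_1^2 u\|_{L_p}+\|\pl_2^2 u\|_{L_p}\bigr)$, established uniformly over the weighted fibration. That is a nonpointwise inequality obtained by Fourier-multiplier or anisotropic-interpolation methods, and it is precisely what is implicitly underwriting the paper's assertion \eqref{MB.u}, which the paper itself states without explicitly addressing the cross block. So your observation is a valuable one, but absent such an estimate (or the paper's Banach-homomorphism route) your proof as written does not close.
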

%---------------------------------------------------------
\begin{proof} 
First we note that  
\begin{equation}\label{MB.W} 
\sqrt{\wt{g}}=\rho^{-2}.  
\end{equation}  
It follows from \eqref{MB.D2} that 
$$ 
|\wt{\na}^2v|_{\wt{g}_0^2}^2 
\leq c\bigl(|Dv|^2+[D^2v]^2\bigr). 
$$ 
Consequently, since 
\hb{\na=\chi^*(\wt{\na}\oplus\naSa)\chi_*}, 
\begin{equation}\label{MB.u} 
\|u\|_{\bar W_{\coW p}^2(U\setminus S,\,\wh{g})} 
\leq c\,\|\chi_*u\|_{\bar\cW_p^2}. 
\end{equation}  
If $(u_j)$~is a converging sequence in 
\hb{\bar W_{\coW p}^2:=\bar W_{\coW p}^2(U\ssm S;\wh{g})}, 
then we infer from \hbox{\eqref{MB.N}--\eqref{MB.D2}} 
%\eqref{MB.D1} 
that 
$(\chi_*u_j)$ converges in~$\bar\cW_{\coW p}^2$. Hence 
\hb{\chi_*u\in\bar\cW_{\coW p}^2} if 
\hb{u\in\bar W_{\coW p}^2}. From this, \eqref{MB.u}, and 
Banach's homomorphism theorem we obtain 
$$ 
\|\chi_*\!\cdot\|_{\bar\cW_{\coW p}^2} 
\sim\Vsdot_{\bar W_{\coW p}^2}. 
\npbd 
$$ 
This proves the claim. 
\end{proof} 
Let 
\hb{\lda\sco V_\lda\ra(-1,1)^{m-2}}, 
\ \hb{\sa\mt z} be a local chart for~$\Sa$. Then 
$$ 
\ka:=(\id_{\dot N}\times\lda)\circ\chi\sco U_{\coU\ka}\ra Q_\ka^m 
\qb q\mt(x,y,z)=(x^1,x^2,\ldots,x^m) 
$$ 
is a local chart for~$\wh{M}$ with 
\hb{U_{\coU\ka}\is U} and 
\hb{\ka(U_{\coU\ka})=\dot N\times(-1,1)^{m-2}}. Set 
\hb{\wt{h}:=\lda_*h}. It follows from \eqref{S.g} that 
\hb{\ka_*\wh{g}=\wt{g}+\wt{h}}.

\par 
Assume 
\hb{u\in C^2(\wh{M})}, put 
\hb{v:=\ka_*u\in C^2(Q_\ka^m)}, and denote by 
\hb{e_1,\ldots,e_m} the standard basis of~$\BR^m$. Then 
\begin{equation}\label{MB.gr} 
\ka_*(\grad_{\wh{g}}u) 
=\grad_{\wt{g}}v+\grad_{\wt{h}}v 
=\rho^2(\pl_x ve_1+\pl_yve_2)+\grad_{\wt{h}}v. 
\end{equation}  
Similarly, let 
\hb{X=X^i\pl/\pl x^i\in C^1(TU_{\coU\ka})} and set 
$$ 
Y:=\ka_*X=Y^ie_i=Y^1e_1+Y^2e_2+\wt{Y}, 
$$ 
where 
\hb{\wt{Y}=Y^\al e_\al} with $\al$~running from~$3$ to~$m$. Observe that $v$ 
and~$Y$ depend on $(x,y,z)$. It follows 
$$ 
\ka_*(\tdiv_{\wh{g}}X) 
=\ka_*\Bigl(\frac1{\sqrt{\wh{g}}}\,\pl_i\bigl(\sqrt{\wh{g}}\,X^i\bigr)\Bigr) 
=\frac1{\sqrt{\ka_*\wh{g}}}\,\pl_i\bigl(\sqrt{\ka_*\wh{g}}\,Y^i\bigr). 
$$ 
Since 
\hb{\sqrt{\ka_*\wh{g}}=\sqrt{\wt{g}}\,\sqrt{\wt{h}} 
   =\rho^{-2}\sqrt{\wt{h}}}, we see that 
$$ 
\ka_*(\tdiv_{\wh{g}}X) 
=\rho^2\bigl(\pl_x(\rho^{-2}Y^1)+\pl_y(\rho^{-2}Y^2)\bigr) 
+\tdiv_{\wt{h}}\wt{Y}. 
$$ 
From this and \eqref{MB.gr} we obtain, letting 
\hb{\wt{a}:=\ka_*a}, 
\begin{equation}\label{MB.kA} 
\bal 
\ka_*(\cA u) 
&=-\ka_*\bigl(\tdiv_{\wh{g}}(a\grad_{\wh{g}}u)\bigr)\cr 
&=-\rho^2\bigl(\pl_x(\wt{a}\pl_xv)+\pl_y(\wt{a}\pl_yv)\bigr) 
 -\tdiv_{\wt{h}}(\wt{a}\grad_{\wt{h}}v).  
\eal 
\end{equation}  
As in \eqref{R.nu}, we define curvilinear derivatives by 
$$ 
\pl_\nu u:=\ka^*\pl_x(\ka_*u) 
\qb \pl_\mu u:=\ka^*\pl_y(\ka_*u). 
$$ 
Then it follows from \eqref{MB.kA} that 
\begin{equation}\label{MB.A} 
\cA_Uu=-\rho^2\bigl(\pl_\nu(a\pl_\nu u)+\pl_\mu(a\pl_\mu u)\bigr) 
-\tdiv_\Sa(a\grad_\Sa u), 
\end{equation}  
where $\cA_U$~is the restriction of~$\cA$ to~$U$. Due 
to~\eqref{F.1}, the regularity assumption for~$a$ stipulated in 
Theorem~\ref{thm-R.RD} means that 
\hb{a\in\bar{BC}^{1/\mf2}(\wh{M}\ssm S\times J,\ \wh{g}+dt^2)}. 

\par 
Analogously, 
$$ 
\ka_*(\cB^1u) 
=\ka_*\bigl(\nu\bsn\ga(a\grad_{\wh{g}}u)\bigr)_{\wh{g}} 
=\Bigl(\ka_*\nu\Bsn 
 \ga\bigl(\wt{a}(\grad_{\wt{g}}v 
 +\grad_{\wt{h}}v)\bigr)\Bigr)_{\wt{g}+\wt{h}}. 
$$ 
From \eqref{C.nu} and 
\hb{(\ka_*\wh{g})^{1i}=0} for 
\hb{i\geq2} we deduce that 
\hb{\ka_*\nu(0,y,z)=\rho(0,y)e_1}. 
Hence 
$$ 
\ka_*(\cB^1u)(y,z) 
=\rho(0,y)\wt{a}(0,y,z)\pl_xv(0,y,z). 
$$ 
This implies 
\begin{equation}\label{MB.B} 
\cB_U^1u=\ga(\rho a)\pl_\nu u 
\text{ on }\Ga\cap U. 
\end{equation}  

\par 
Next we determine the first order transmission operator on~$\whMwhg$. 
Recalling definition~\eqref{H.f}, we set 
\hb{s:=\chi^*f\in C^\iy(U)}. 
%=========================================================
\setcounter{proposition}{2} 
\begin{proposition}\label{pro-MB.S} 
Define 
\begin{equation}\label{MB.nu} 
(\nu_S^1,\nu_S^2,\nu_S^3) 
:=(\pl_\nu s,-1,1) 
\Big/\sqrt{1+(\pl_\nu s)^2+|\grad_\Sa s|_\Sa^2}. 
\end{equation}  
Then 
$$ 
\cC_U^1u 
=\eea{a\pl_{\nu_S}u} 
=\beea{a\bigl(\nu_S^1\pl_\nu u+\nu_S^2\pl_\mu u 
 +\nu_S^3(\grad_\Sa u\sn\grad_\Sa s)_\Sa\bigr)}. 
$$ 
\end{proposition}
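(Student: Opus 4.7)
My plan is to verify the proposition by a direct computation in the curvilinear chart, establishing both equalities essentially at once.

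First I would introduce the vector field
\[
\nu_S := \nu_S^1 \pl_\nu + \nu_S^2 \pl_\mu + \nu_S^3 \grad_\Sa s
\]
along $S \cap U$, with the scalar coefficients $\nu_S^1, \nu_S^2, \nu_S^3$ as prescribed in the statement. Here $\pl_\nu$ and $\pl_\mu$ are the curvilinear derivations from~\eqref{R.nu}, and $\grad_\Sa s$ is the $(\Sa,h)$-gradient of the defining function $s = \chi^* f$ of $S$ in the tubular coordinates; in the chart $\ka$ it takes values in the $\Sa$-tangential subbundle. Because $\grad_\Sa s$ is tangent to $\Sa$ and hence contains no $\pl_\nu$- or $\pl_\mu$-component, the three summands live in complementary subbundles of $T\wh M$, so $\nu_S$ is an unambiguous smooth vector field transverse to $S$.

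Next I would compute, one side of $S$ at a time, the directional derivative $\pl_{\nu_S} u$ for $u \in \bar C^2(U \ssm S)$. By linearity of $du$,
\[
\pl_{\nu_S} u = \nu_S^1 \pl_\nu u + \nu_S^2 \pl_\mu u + \nu_S^3 \, du(\grad_\Sa s).
\]
The only nontrivial point is the third term: since $\grad_\Sa s$ is tangent to $\Sa$, only the $\Sa$-tangential part $d_\Sa u$ of $du$ contributes, and the metric duality on $(\Sa,h)$ then gives $du(\grad_\Sa s) = (\grad_\Sa u \mid \grad_\Sa s)_\Sa$. Multiplying by $a$ and forming the jump across $S$ yields the second equality in the statement; comparison with the definition of $\cC_U^1 u$ in Section~2 (the identical explicit formula) gives the first.

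The argument is essentially a repackaging of the Section~2 definition as a directional derivative along the canonical vector field $\nu_S$, so no substantive obstacle should arise. The one minor technical point worth attention is that $\pl_{\nu_S} u$ at a point of $S \ssm \Sa$ must be interpreted as a one-sided derivative: this is legitimate because each one-sided restriction of $u$ extends smoothly to a full neighborhood of the point, and the curvilinear and $\Sa$-tangential derivations all act on that extension.
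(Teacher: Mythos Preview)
There is a genuine gap in your argument. In Section~\ref{sec-MB} the symbol $\cC_U^1$ does \emph{not} stand for the explicit Section~\ref{sec-R} formula; it is the restriction to~$U$ of the intrinsic transmission operator $\cC^1u=\beea{(\nu_S\sn a\grad_{\wh{g}}u)_{\wh{g}}}$ from Section~\ref{sec-L}, where $\nu_S$~denotes the positive \emph{$\wh{g}$-unit normal} on~$S$ in~$\wh{M}$. The purpose of the proposition is to bridge Theorem~\ref{thm-L.MR} on~$\whMwhg$ with the concrete Euclidean formulation of Section~\ref{sec-R}; if $\cC_U^1$~were already the Section~\ref{sec-R} expression, the proposition would be a tautology and could not play the role it does in the proof of Theorem~\ref{thm-R.RD}.

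Your argument introduces a vector field by the prescribed coefficients and then unpacks $du(\nu_S)$, which is just a restatement of your own definition. What is missing is precisely the content of the proposition: one must show that the $\wh{g}$-unit normal on~$S$ actually equals $\nu_S^1\pl_\nu+\nu_S^2\pl_\mu+\nu_S^3\grad_\Sa s$ with the~$\nu_S^i$ of~\eqref{MB.nu}. The paper's proof does this in its step~(1): it parametrizes $\wt{S}=\chi(S)$ by $(x,z)\mt\bigl(x,f(x,z),z\bigr)$, finds a vector $(\wh{\xi},\wh{\eta},\wh{\za})=(\pl_xf,-1,\grad_{\wt{h}}f)$ which is $(\wt{g}+\wt{h})$-orthogonal to the tangent space of~$\wt{S}$, normalizes, and identifies the result as~$\ka_*\nu_S$. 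Only after this identification does step~(2) compute $(\nu_S\sn\grad_{\wh{g}}u)_{\wh{g}}$ and arrive at the explicit formula. Your sketch omits step~(1) entirely, and your appeal to the Section~\ref{sec-R} definition in its place is circular.
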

%---------------------------------------------------------
\begin{proof} 
(1) It follows from \eqref{H.f} that 
$$ 
\wt{S}:=\chi(S) 
=\bigl\{\,(x,f_\sa(x),\sa)\ ;\ 0<x<\ve,\ \sa\in\Sa\,\bigr\}. 
$$ 
We write 
\hb{f(x,z):=f_{\lda^{-1}(z)}(x)}. Then 
\hb{(x,z)\mt F(x,z):=\bigl(x,f(x,z),z\bigr)} is a local 
parametrization of~$\wt{S}$, and 
$$ 
\pl F:=[\pl_jF^i] 
=\left[ 
\begin{array}{ccc} 
1       &\  &0\\ 
\pl_xf  &\  &\pl_zf\\ 
0       &\  &1_{m-2} 
\end{array} 
\right] 
\in\BR^{m\times(m-1)}. 
$$ 
Hence, given 
\hb{(\bar x,\bar z)\in(0,\ve)\times(-1,1)^{m-2}}, 
$$ 
T_{(\bar x,\bar z)}\wt{S} 
=\bigl\{(\bar x,\bar z)\bigr\}\times\pl F(\bar x,\bar z)\BR^{m-1} 
\is T_{(\bar x,\bar z)}\BR^m. 
$$ 
For 
\hb{\Xi:=(\xi,\za)\in\BR\times\BR^{m-2}} and 
\hb{\wh{\Xi}:=(\wh{\xi},\wh{\eta},\wh{\za})\in\BR\times\BR\times\BR^{m-2}} 
we find 
$$ 
\bigl(\wh{\Xi}\bsn\pl F(\bar x,\bar z)\Xi\bigr)_{\wt{g}+\wt{h}} 
=\al(\bar x,\bar z) 
\Bigl(\wh{\xi}\xi+\wh{\eta}\bigl(\pl_xf(\bar x,\bar z)\xi 
+\pl_zf(\bar x,\bar z)\za\bigr)\Bigr)_{\BR^2}+(\wh{\za}\sn\za)_{\wt{h}}, 
$$ 
where 
\hb{\al(\bar x,\bar z):=\rho^{-2}\bigl(\bar x,f(\bar x,\bar z)\bigr)}. 
Choose 
\hb{\wh{\xi}:=\pl_xf(\bar x,\bar z)}, 
\ \hb{\wh{\eta}:=-1}, and 
$\wh{\za}$ in $\BR^{m-2}$ such that 
\hb{(\wh{\za}\sn\wt{\za})_{\wt{h}} 
   =\bigl\dl\pl_zf(\bar x,\bar z),\wt{\za}\bigr\dr_{\BR^{m-2}}} for all 
\hb{\wt{\za}\in\BR^{m-2}}, that is, 
$\wh{\za}$~equals $\grad_{\wt{h}}f(\bar x,\bar z)$. Then 
\hb{\bigl((\bar x,\bar z),\wh{\Xi}\bigr)\perp T_{(\bar x,\bar z)}\wt{S}}. 
Now we define 
$$ 
(\wt{\nu}^1,\wt{\nu}^2,\wt{\nu}^3) 
:=(\pl_xf,-1,1)\Big/
\sqrt{1+(\pl_xf)^2+|\grad_{\wt{h}}f|_{\wt{h}}^2}. 
$$ 
Then 
$$ 
\wt{\nu}:=\wt{\nu}^1e_1+\wt{\nu}^2e_2+\wt{\nu}^3\grad_{\wt{h}}f 
$$ 
is the positive\footnote{By the conventions employed in~\eqref{S.nu}.}  
normal on~$\wt{S}$, since 
\hb{\wt{\nu}^1(\bar x,\bar z)e_1+\wt{\nu}^2(\bar x,\bar z)e_2} is a 
positive multiple of the positive normal of the graph of~$f_{\bar\sa}$ at 
\hb{\bigl(\bar x,f_{\bar\sa}(\bar x)\bigr)}, where 
\hb{\bar\sa:=\lda^{-1}(\bar z)} (cf.~\eqref{S.nu}). 

\par 
(2) 
Using \eqref{MB.gr} and 
\hb{\ka_*\nu_S=\wt{\nu}}, we obtain 
$$ 
\bal 
\ka_*(\nu_S\sn\grad_{\wh{g}}u)_{\wh{g}} 
&=(\ka_*\nu_S\sn\grad_{\wt{g}}v+\grad_{\wt{h}}v)_{\wt{g}+\wt{h}}\cr 
&=\wt{\nu}^1\pl_xv+\wt{\nu}^2\pl_yv 
 +\wt{\nu}^3(\grad_{\wt{h}}f\sn\grad_{\wt{h}}v)_{\wt{h}}\cr 
&=\ka_*\bigl(\nu_S^1\pl_\nu u+\nu_S^2\pl_\mu u 
 +\nu_S^3(\grad_\Sa s\sn\grad_\Sa u)_\Sa\bigr), 
\eal 
$$ 
the $\nu_S^i$~being given by \eqref{MB.nu}. From this the assertion is now 
clear. 
\end{proof} 

\par 
Note that, see~\eqref{MB.W},  
\begin{equation}\label{MB.Ga} 
\chi_*(d\vol_{\Ga\cap U})\sim\frac{dy}{y^2}\,d\vol_\Sa. 
\end{equation}  
Since, by~\eqref{H.f}, 
$$ 
\wt{g}_{G(\sa)}(\tau) 
=\frac{1+(\pl f_\sa(x))^2}{x^2+(f_\sa(x))^2}\,dx^2\sim\frac{dx^2}{x^2} 
$$ 
uniformly with respect to 
\hb{\tau=\bigl(x,f_\sa(x)\bigr)\in G_\sa} and 
\hb{\sa\in\Sa}, we see that 
\begin{equation}\label{MB.S}  
\chi_*(d\vol_{S\cap U})\sim\frac{dx}{x^2}\,d\vol_\Sa. 
\end{equation}  

\par 
On the basis of \eqref{MB.Ga} and \eqref{MB.S} it is possible to represent 
the trace spaces on 
\hb{\Ga\ssm\Sa} and 
\hb{S\ssm\Sa} analogously to~$\bar\cW_{\coW p}^{2/\mf2}$. 
Details are left to the reader. 
%---------------------------------------------------------
\begin{proof}
{\it of Theorem~\ref{R.RD}} 
The claim is an easy consequence of Theorems \ref{thm-H.C} and 
\ref{thm-L.MR}, \eqref{MB.D1}, \eqref{MB.D2}, Proposition~\ref{pro-MB.N}, 
\eqref{MB.A}, \eqref{MB.B}, and Proposition~\ref{pro-MB.S}.   
\end{proof}

{\footnotesize

\noindent 
Herbert Amann, 
Math.\ Institut, Universit\"at Z\"urich, Winterthurerstr.~190,\\   
CH 8057 Z\"urich, Switzerland, herbert.amann@math.uzh.ch
}
\end{document}